\numberwithin{equation}{section}
\def\thm@space@setup{%
  \thm@preskip=12pt plus 4pt minus 6pt
  \thm@postskip=15pt
}
\newtheorem{theorem}{Theorem}[section]
\newtheorem*{theorem*}{Theorem}
\newtheorem{proposition}[theorem]{Proposition}
\newtheorem*{proposition*}{Proposition}
\newtheorem{lemma}[theorem]{Lemma}
\newtheorem*{lemma*}{Lemma}
\newtheorem{claim}[theorem]{Claim}
\newtheorem*{claim*}{Claim}
\newtheorem*{subclaim*}{Sub-claim}
\newtheorem{corollary}[theorem]{Corollary}
\newtheorem*{corollary*}{Corollary}
\newtheorem{theoremA}{Theorem}
\theoremstyle{definition}
\newtheorem{definition}[theorem]{Definition}
\newtheorem{remark}[theorem]{Remark}
\newtheorem{example}[theorem]{Example}
\newtheorem*{definition*}{Definition}
\newtheorem*{observation*}{Observation}
\newtheorem*{remark*}{Remark}
\newtheorem*{example*}{Example}
\newtheorem*{question*}{Question}
\newtheorem*{exercise*}{Exercise}
\newtheorem*{fact*}{Fact}
\newtheorem*{notation*}{Notation}
\newtheorem*{conjecture*}{Corollary}
\newcommand{\bbN}{\mathbb{N}}
\newcommand{\bbS}{\mathbb{S}}
\newcommand{\bbT}{\mathbb{T}}
\newcommand{\bfT}{\mathbf{T}}
\newcommand{\calB}{\mathcal{B}}
\newcommand{\calC}{\mathcal{C}}
\newcommand{\calD}{\mathcal{D}}
\newcommand{\calK}{\mathcal{K}}
\newcommand{\calL}{\mathcal{L}}
\newcommand{\calN}{\mathcal{N}}
\newcommand{\NN}{\mathcal{N}}
\newcommand{\calT}{\mathcal{T}}
\newcommand{\ii}{^{-1}}
\newcommand{\ssm}{\smallsetminus}
\newcommand{\abs}[1]{\left| #1 \right|}
\newcommand{\bbl}[1]{\mathfrak{J}_\circ(#1)}
\newcommand{\bdr}[1]{\mathfrak{J}_\partial(#1)}
\newcommand{\bbb}[1]{\mathfrak{J}(#1)}
\newcommand{\pos}{>0}
\newcommand{\np}{\le 0}
\newcommand{\I}{$\circ$}
\newcommand{\D}{$\partial$}
\newcommand{\joint}{joint\xspace}
\title{Highly twisted diagrams}
\author{Nir Lazarovich}
\author{Yoav Moriah}
\author{Tali Pinsky}
\thanks{NL was supported by the Israel Science Foundation (grant no. 1562/19).}
\date{}
\subjclass[2010]{Primary 57M}
\keywords{knot diagrams, twist regions, Euler Characteristic}
\address{Department of Mathematics\\
Technion\\
Haifa, 32000 Israel}
\email{lazarovich@technion.ac.il}
\email{ymoriah@technion.ac.il}
\email{talipi@technion.ac.il}
\begin{document}

\maketitle

\begin{abstract} We prove that the knots and links that admit a $3$-highly 
twisted irreducible diagram  with more than two twist regions are hyperbolic. 
This should be compared with  a result of Futer-Purcell for $6$-highly 
twisted diagrams. While their proof uses  geometric methods our  proof is 
achieved by  showing that the  complements of such  knots or links are 
unannular and atoroidal. This is  done by using a new approach 
involving an Euler  characteristic argument. 

\end{abstract}

\section{introduction}\label{sec: introduction}

The prevailing feeling among low dimensional topologists is that ``most" links 
$\mathcal{L}$ in  $\bbS^3$ are hyperbolic. That means that the open manifold 
$\bbS^3 \ssm \mathcal{L}$ can be endowed with a complete, finite-volume, hyperbolic 
metric of sectional curvature $-1$. Being hyperbolic is a property of the manifold 
with far reaching consequences. However, proving that a specific link 
$\mathcal{L}$ is hyperbolic turns out to be non trivial. This is especially 
true if  the link $\mathcal{L}$ is ``heavy duty", i.e., has a very large 
crossing number. See for example
\cite{minsky-moriah:surplus}.

The question of when can one decide if the complement of a  link in $\bbS^3$ is a 
hyperbolic manifold  from a projection diagram  has been of interest for a
long time. The first result  in this direction 
is by Hatcher and Thurston who proved that  complements of $2$-bridge knots which 
have at least two twist regions (they are not torus knots or links) are  hyperbolic, 
see \cite{Hatcher-Thurston}. The second is Menasco's result \cite{Menasco} that
a non-split prime alternating link which is not a torus link is hyperbolic.  Later  
Futer  and Purcell proved in  \cite{FuterPurcell}, among other results, that every 
link with a   $6$-highly twisted, twist-reduced diagram which has at least  two 
twist regions  is hyperbolic. Their result is  obtained by applying Marc Lackenby's 
$6$-surgery theorem, see \cite{lackenby2000word}, to the corresponding fully augmented 
links. Our main theorem is the following (for definitions see \S\ref{sec: preliminaries}):

\begin{theoremA}\label{thm: highly twisted implies hyperbolic}
Let $D(\calL)$ be a connected, prime, twist-reduced, $3$-highly twisted diagram of a
link $\calL$ with at least two twist regions, then $\calL$ is hyperbolic.
\end{theoremA}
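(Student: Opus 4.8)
The plan is to combine Thurston's hyperbolization theorem for Haken manifolds with a combinatorial normal-surface argument run directly on the diagram. Since $D(\calL)$ is connected, $\calL$ is non-split, so its exterior $M:=\bbS^3\ssm N(\calL)$ is irreducible; primeness and twist-reducedness of $D(\calL)$ guarantee that $\calL$ is neither the unknot nor the Hopf link, so $M$ has incompressible boundary and is Haken. By Thurston's theorem, such an $M$ is hyperbolic as soon as it is atoroidal and anannular (the unknot and Hopf link being the only atoroidal, anannular link exteriors that are Seifert fibered rather than hyperbolic). So assume for contradiction that $M$ contains an essential surface $F$ which is a torus, or an annulus with $\partial F\subset\partial N(\calL)$; in both cases $\chi(F)=0$, and we will obtain a contradiction by showing $\chi(F)<0$.

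\textbf{A combinatorial decomposition and a normal form.} For each twist region $R_i$ of $D(\calL)$ take the crossing disk $\Delta_i\subset\bbS^3$ meeting $\calL$ transversally in two points and encircling the two strands of $R_i$; together with the pieces into which the projection sphere is cut by the twist regions, the resulting family $\Sigma$ decomposes $M$ into elementary chambers, each a simple piece whose topology is controlled by the strands of $D(\calL)$ running through it. I would isotope $F$ to meet $\Sigma$ transversally and then put it into a normal form minimising, lexicographically, $|F\cap\Sigma|$ and then the number of components of $F\cap\Sigma$. Incompressibility and $\partial$-incompressibility of $F$, irreducibility of $M$, and primeness and twist-reducedness of $D(\calL)$ allow the usual innermost-disk and outermost-arc surgeries to eliminate every trivial circle and arc of intersection; in the resulting normal form every component of $F\cap\Sigma$ is essential, every piece $F\cap(\text{chamber})$ is planar and meets $\Sigma$ in essential arcs and circles, and $F$ genuinely threads through each twist region it meets at all.

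\textbf{The Euler characteristic count.} The pattern induced by $\Sigma$ on $F$ --- pieces $F_j$ as $2$-cells, arcs and circles of $F\cap\Sigma$ as $1$-cells, corner points as $0$-cells --- defines a cell structure, and summing Euler characteristics over its cells rearranges into $\chi(F)=\sum_j w(F_j)$, where the local weight $w(F_j)$ depends only on the topological type of $F_j$ and the number of intersection arcs on its boundary. Each $F_j$ is planar, so $w(F_j)\le 1$, and a piece contained in a ``trivial'' chamber contributes at most $0$; the content of the $3$-highly twisted hypothesis is that, after normalisation, any piece threading a twist region must cross the corresponding crossing disks at least three times, hence carries at least three intersection arcs and so has $w(F_j)<0$. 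Since $F$ is essential it can neither be swallowed by a single chamber nor avoid all the twist regions --- this is where the connectedness of $D(\calL)$ and the presence of at least two twist regions are used --- so at least one twisting piece occurs, and summing the weights yields $\chi(F)\le -1$, contradicting $\chi(F)=0$. Therefore $M$ is atoroidal and anannular, and $\calL$ is hyperbolic.

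\textbf{The main obstacle.} The crux, and the feature that dictates the threshold $3$ (to be contrasted with the $6$ of Futer--Purcell), is the local analysis inside a single twist region: one must show that an incompressible, $\partial$-incompressible surface cannot ``short-cut'' a chain of at least three crossings --- that after normalisation a piece of $F$ running through $R_i$ really does pick up an intersection arc with the crossing disks for essentially every crossing, with no cancellation. This is a parity/ping-pong argument tracking how an arc of $\partial F_j$ winds through the successive bigons of $R_i$, and it is markedly more delicate in the annulus case, where $\partial F$ lies on $\partial N(\calL)$: there one must additionally rule out $\partial$-compressions that have been pushed into a twist region, and exclude the possibility that a twisting piece is an annulus (of weight $0$) rather than a disk. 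Once this local rigidity is in place, the global Euler-characteristic bookkeeping of the previous paragraph is routine.
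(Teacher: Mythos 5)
Your overall architecture matches the paper's: reduce to Thurston's hyperbolization, and rule out essential spheres, tori and annuli by a normal-form-plus-Euler-characteristic count against a decomposition built from the diagram. But there are two genuine gaps, and the second one is fatal to the argument as written. First, "since $D(\calL)$ is connected, $\calL$ is non-split" is not a valid inference (a connected diagram can present a split link); the paper has to \emph{prove} non-splitness, and does so by running the very same Euler-characteristic machinery on an essential sphere, so irreducibility is an output of the count, not an input to it.

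The fatal gap is your claim that every normalized piece threading a twist region carries at least three intersection arcs and hence has strictly negative weight, so that $\chi(F)\le -1$. This local rigidity is false. In the paper's setup the curves of intersection can \emph{turn} at a single crossing of a twist region (one joint) or \emph{wiggle} through two consecutive crossings (two joints), and such curves genuinely survive all innermost-disk/outermost-arc normalizations; they have positive or zero contribution ($\chi_+(c)=1-\tfrac14\bbb{c}$ with $\bbb{c}=2$ or $3$). The count therefore only yields $\chi(F)\le 0$, and only after a delicate global \emph{redistribution} of the positive contributions (the paper's $\calK$-limbs, ribs, vertebrae and the layer/onion lemma, which is where $3$-highly twisted, primeness and twist-reducedness actually enter). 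Moreover $\chi(F)=0$ cannot be excluded by any counting argument at all, because a boundary-parallel torus realizes it: the paper needs an entire further section classifying the curves with zero redistributed contribution and showing, by an innermost-bad-curve analysis, that for a torus they force boundary-parallelism and for an annulus they force one of a short list of configurations each of which contradicts primeness, twist-reducedness, or atoroidality. Your sketch collapses all of this into the assertion that the bookkeeping is "routine," which is precisely backwards: the bookkeeping identity $\chi(F)=\sum_j w(F_j)$ is the routine part, and the equality case $\chi(F)=0$ is where essentially all the work of the paper lies.
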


 This result is sharp since  there are non-hyperbolic links with 2-highly twisted 
 link diagrams, as Figure \ref{fig: non hyperbolic counterexample} shows.

\vskip7pt

Theorem \ref{thm: highly twisted implies hyperbolic} weakens the conditions 
imposed in \cite{FuterPurcell} on $D(\mathcal{L})$ from $6$-highly twisted 
to $3$-highly twisted. Two other relevant results in the spirit of  
Theorem \ref{thm: highly twisted implies hyperbolic} are \cite{giambrone} 
and \cite{futerPurcellhyperbolic}. They replace the condition that a diagram 
be 6-highly twisted by conditions related to its ``semi-adequacy'' (as defined 
there). Theorem \ref{thm: highly twisted implies hyperbolic} is a generalization 
of previous work of the authors \cite{lazarovich2021highly}, where a similar 
result was proved for knots/links that have 3-highly twisted plat diagrams.

Clearly not all links have a  diagram that satisfies the conditions of 
Theorem \ref{thm: highly twisted implies hyperbolic}. 
However the  subset of links that do is a ``large'' subset in a sense that can 
be made precise, see the discussion in \cite{lustig2012large}. 

\begin{figure}
    \centering
    \includegraphics{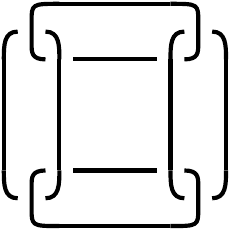}
    \caption{A non-hyperbolic link with a 2-highly twisted diagram.}
    \label{fig: non hyperbolic counterexample}
\end{figure}

As a corollary to Theorem~\ref{thm: highly twisted implies hyperbolic} we obtain a 
simple method to construct essential surfaces in complements of links with highly 
twisted diagrams. This is stated in Section \ref{sec: essential holed spheres} 
as Theorem \ref {thm: General vertical spheres}.

\vskip5pt

Although there are non-hyperbolic links with 2-highly twisted diagrams, we believe 
that the conditions of Theorem \ref{thm: highly twisted implies hyperbolic} could be 
weakened so as to include all alternating links (cf. \cite{Menasco}) and allow \emph{some} 
twist regions to have less than three crossings.
 
\vskip15pt

\section{preliminaries}\label{sec: preliminaries}

\subsection{Bubbles and twist regions.}\label{subsec: bubbles and twist regions}

\vskip7pt

Let $\calL \subset \bbS^3$ be a link. The projection of a link $L$ in the isotopy class 
$\mathcal{L}$ onto a plane $P$ together with the crossing data is a \emph{link diagram} of 
$L$ and is denoted by $D(L)$.  Let $\varepsilon$ be sufficiently small so that the closed
$\varepsilon$-balls around the crossings of $D(L)$ are disjoint. Let $\calB_1,\dots,\calB_r$
be $\varepsilon$-balls around the $r$ crossings of the diagram. The boundaries 
$B_i = \partial \calB_i$, $1\le i \le r$, are the \emph{bubbles} of the diagram. 
The link $\mathcal{L}$ is isotopic to a link $L$ 
which is embedded in $P \cup \bigcup_i B_i$. 
Note that 
$P$ divides each bubble into two hemispheres denoted  by $B^+_i$ and $B^-_i$.
Denote the two 2-spheres $$P^\pm  = (P \ssm \bigcup_i \calB_i) \cup \bigcup_i B^\pm _i.$$  
Each of $P^+,P^-$ bounds  a $3$-ball $H^\pm$ in $\bbS^3\ssm L$.

A \emph{twist region} $T$ is a disk in $P$ which contains a maximal (with respect 
to inclusion) chain of bigons in $D(L)$ describing a trivial integer 2-tangle. 
See Figure \ref{fig:Twist regions} for an example of a diagram with twist regions.
We will assume that a twist region contains the projection of the bubbles around 
the crossings in $T$. We will often abuse terminology, and use twist regions to 
refer to the regions in $P^\pm$ which project to twist regions. Correspondingly, 
we treat the bubbles around the crossings of $T$ as being contained in $T$. 
A \emph{twist box} is the tangle $(\bfT,t)$ where $\bfT$ is the product 
$T\times [-2\varepsilon,2\varepsilon]$ where $T$ is a twist region, and $t$ is 
the tangle  $\bfT \cap L$ The disk diagram $D(L)$ uniquely decomposes into 
disjoint twist regions. 

\begin{figure}
\includegraphics[width = 0.3\textwidth]{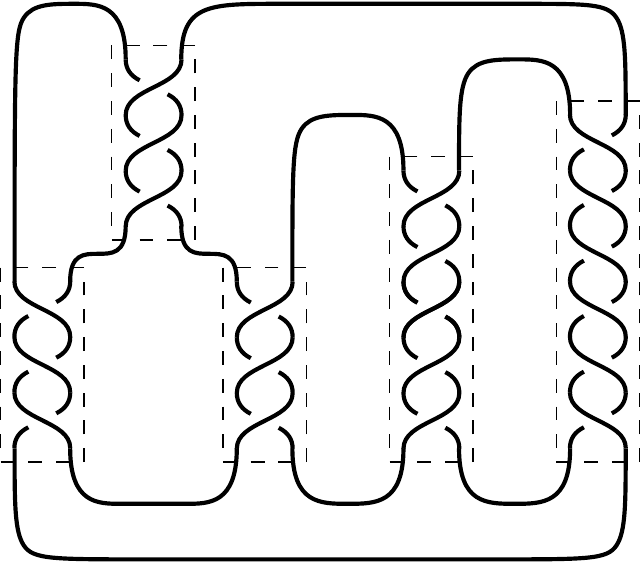}
\caption{A 3-highly twisted link diagram. The twist regions are the dashed rectangles.}
\label{fig:Twist regions}
\end{figure}

\begin{definition}\label{def: twist reduced}
Let $D(L)$ be a link diagram. 
\begin{enumerate}
    \item The diagram $D(L)$ is \emph{prime} if any simple closed curve in $P$ intersecting  
    $D(L)$ transversely in two points bounds a subdiagram with no crossings.
    \item A \emph{twist-reduction subdiagram} is a subdiagram of $D(L)$ enclosed by a 
    simple closed curve $\gamma$ in $P$ which intersects the edges of $\gamma$ transversely 
    in four points composed of two pairs each of which is adjacent to a crossing of $D(L)$ 
    but which is not a chain of bigons describing an integer 2-tangle. The diagram  $D(L)$ 
    is  \emph{twist-reduced} if it contains no twist-reduction subdiagram.
    \item For $k\in\bbN$, the diagram is \emph{$k$-highly twisted} if every twist region 
    has at least $k$ crossings.
\end{enumerate}
\end{definition}

Note that every diagram can be made twist-reduced by performing flypes on 
twist-reduction subdiagrams.

\begin{definition}
A twist region $T$ intersects the link diagram in four points, dividing its boundary $\partial T$
into four segments. If the twist region has at least two crossings, then a pair of opposite 
segments of $\partial T$ can be called the \emph{length edge} or \emph{width edge} of $T$ 
as in the following figure.
\begin{figure}[H]
\includegraphics[]{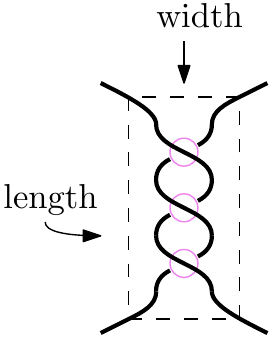}
\end{figure}
\end{definition}

\vskip10pt

\section{Surfaces in link complements.}
\vskip7pt
\subsection{Normal position.}\label{subsec: normal position} 
We are interested in studying compact surfaces $S$ properly embedded in 
$\bbS^3 \ssm \NN(L)$. If $\partial S \neq \emptyset$ we extend $S$ by 
shrinking the neighborhood $\NN(L)$ radially. This determines  a map 
$\iota:(S,\partial S) \to (\bbS^3,L)$,  whose image we denote by $S$ as well, 
which is an embedding on the interior of $S$.

\begin{lemma}\label{lem: normal form} 
Let $S\subset \bbS^3 \ssm \NN(L)$ be a proper surface with no meridional boundary 
components, and let $(\bfT,t)$ be a twist box. Then,  up to isotopy,
$S \cap \bfT$ is a disjoint  union of disks  $D \subset (\bfT,t)$ of one of the 
following three types:
\vskip7pt
\begin{enumerate}
    \item[ \underline{Type 0}:] $D$ separates the two strings of $t$.
    \vskip7pt
    \item[ \underline{Type 1}:] $\partial D$ decomposes as the union of two arcs 
    $\alpha \cup \beta$ 
    such that $\alpha\subset t$ and $\beta\subset \partial \bfT$.
    \vskip7pt
    \item[ \underline{Type 2}:] $\partial D$ decomposes as the union of four arcs 
    $\alpha_1 \cup \beta_1 \cup \alpha_2 \cup \beta_2$ where $\alpha_i \subset t_i$ and 
    $\beta_i \subset \partial \bfT$.
\end{enumerate}
Moreover, the isotopy decreases the number of bubbles that $S$ meets and   
we may further assume that $\iota|_{\partial L}:\partial S \to L$ is a covering map. 
\end{lemma}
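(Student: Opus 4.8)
The plan is to isotope $S$ to a position that is \emph{least complicated} with respect to $\bfT$ and then read the combinatorics of $S\cap\bfT$ off directly. First I would make $S$, after the radial re-expansion of $\NN(L)$, transverse to $L$, to the sphere $\partial\bfT$, and to each bubble $B_i$ contained in $\bfT$. Among all surfaces isotopic to $S$ in $\bbS^3\ssm\NN(L)$ and transverse in this way, I would fix one minimizing in lexicographic order the pair consisting of the number of bubbles met by $S$ and the number of arcs and closed curves of $S\cap\partial\bfT$. Every simplifying move below is an ambient isotopy supported near a disk that is disjoint from $L$ and, since the bubbles lie in the interior of $\bfT$, can be chosen to miss them; so no move increases the number of bubbles $S$ meets. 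Hence once a minimizer is shown to have the asserted form, the last sentence of the lemma --- that the isotopy never increases, and in the non-trivial cases strictly decreases, the number of bubbles met --- is automatic.

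Next I would clean up the boundary data. Set $\Sigma=\partial\bfT\ssm t$, a four-punctured sphere whose punctures are the four endpoints of the two strands of $t$; the components of $S\cap\partial\bfT$ are simple closed curves in $\Sigma$ together with arcs joining the punctures. If a closed curve of $S\cap\partial\bfT$ bounds a disk in $\Sigma$, then an innermost such disk $E$ meets $S$ only along $\partial E$, and, using that $E$ and a small product neighbourhood of it are disjoint from $L$ and from all bubbles, one pushes the sheet of $S$ along $E$ across $\partial\bfT$; this is an ambient isotopy when $\partial E$ is inessential on $S$, and otherwise one performs the resulting compression, in either case lowering the complexity and contradicting minimality. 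An analogous outermost-arc move eliminates every arc of $S\cap\partial\bfT$ cutting off from $\Sigma$ a disk that meets $t$ in a single sub-arc. Thus in a minimizer every closed curve of $S\cap\partial\bfT$ is isotopic in $\Sigma$ to the boundary of a disk of \underline{Type 0}, and every arc joins two of the four segments of $\partial T$ in one of the finitely many essential ways. Separately, if $\iota|_{\partial S}$ is not a covering onto its image, it has a fold point where $\partial S$ doubles back over a strand of $t$; a small isotopy of $S$ near that point of $L$ (equivalently a re-choice of the radial shrinking there), which misses all bubbles, removes the fold, and iterating makes $\iota|_{\partial S}$ a covering and every component of $S\cap t$ a monotonically traversed sub-arc of a strand.

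I would then classify a component $F$ of $S\cap\bfT$. Its boundary alternates between arcs on $t$ and arcs or whole circles on $\partial\bfT$, the latter constrained as above. A compression or boundary-compression of $F$ inside $(\bfT,t)$ would, after an ambient isotopy (or, in the non-generic case, after performing the compression), strictly reduce the complexity; hence in a minimizer every $F$ is incompressible and boundary-incompressible in $(\bfT,t)$ and no $F$ is parallel into $\partial\bfT$ --- such an $F$ being pushed out of $\bfT$ entirely. It remains to check that an incompressible, boundary-incompressible surface properly embedded in the thin product box $\bfT=T\times[-2\varepsilon,2\varepsilon]$, meeting $t$ in monotone arcs and $\partial\bfT$ only in the patterns isolated above, must be a disk: one separating the two strands (\underline{Type 0}), one meeting $t$ in a single arc (\underline{Type 1}), or one meeting $t$ in exactly two arcs on the two distinct strands (\underline{Type 2}). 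Here one projects $F$ to the disk $T$ and argues that any extra topology or any extra boundary arc would force $F$ to re-enter a bubble, contradicting minimality of the bubble count.

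The main obstacle, as in all such normal-form statements, is this last step --- converting ``least complicated'' into a clean trichotomy. Concretely, the delicate points are (i) checking that each reducing move is genuinely realized by an ambient isotopy in $\bbS^3\ssm\NN(L)$ that creates no new intersections with the bubbles, so that the complexity is honestly monotone, and (ii) the combinatorial classification inside the thin twist box, which must exclude annuli clasping the strands, once-punctured tori, and similar pieces; this is precisely where the product structure of $\bfT$ and the fact that leaving a bubble is real progress are indispensable.
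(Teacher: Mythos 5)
Your proposal sets up a reasonable minimization framework, but it stops short of the actual content of the lemma: the classification of the pieces of $S\cap\bfT$ into the three disk types is asserted, not proved. You write that ``one projects $F$ to the disk $T$ and argues that any extra topology or any extra boundary arc would force $F$ to re-enter a bubble,'' and then you yourself flag this as ``the main obstacle.'' That obstacle is the lemma. Nothing in your setup rules out, say, an annulus component of $S\cap\bfT$ with both boundary circles of Type~0 pattern on $\partial\bfT$ and meeting no bubble at all, or a disk whose boundary runs over $t$ more than twice; excluding these requires an argument, and ``extra topology forces re-entering a bubble'' is not obviously true for a surface that merely minimizes your complexity. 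A second, independent problem: the lemma assumes only that $S$ is a proper surface with no meridional boundary, not that it is incompressible, and its conclusion is ``up to isotopy.'' Your cleanup step says that when a curve of $S\cap\partial\bfT$ bounding a disk in $\Sigma$ is essential on $S$, ``one performs the resulting compression.'' A compression is not an isotopy; it changes $S$, so this move cannot be used to prove the statement as written, and since $S$ may be compressible this case cannot be waved away.

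The paper avoids both issues by exploiting the product structure of the trivial tangle rather than classifying essential surfaces in it. It identifies $\bfT\ssm\NN(t)$ with $P\times[0,1]$ for $P$ a twice-holed disk, first isotopes $S$ so that $S\cap(\alpha\times[0,1])$ consists only of vertical arcs ($\alpha$ the arc joining the two holes) --- this is the step that strictly decreases the bubble count --- and then isotopes $S$ so that $S\cap(P\times\{\tfrac12\})$ consists only of the three essential arc types in $P$. The surface in the thin slab $P\times[\tfrac12-\varepsilon,\tfrac12+\varepsilon]$ is then a product over these arcs, hence automatically a union of disks of Types 0, 1, 2, and an ambient stretch of the slab to all of $\bfT$ finishes the proof. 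In other words, the trichotomy is read off from the three isotopy classes of essential arcs in a twice-holed disk, with no incompressibility hypothesis and no surface classification needed. If you want to salvage your route, you would need to supply the missing classification of incompressible, $\partial$-incompressible, non-$\partial$-parallel surfaces in the trivial $2$-tangle and to eliminate the compression move; adopting the product/cross-section argument is substantially shorter.
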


\begin{proof}
If no component of $\partial S$ is a meridian, we may assume that up to isotopy 
$\iota|_{\partial L}:\partial S \to L$ is a covering map. 

The twist box $(\bfT,t)$ is a trivial 2-tangle. The complement $\bfT\ssm \NN(t)$ can be 
identified with $P\times [0,1]$ where $P$ is a twice holed disk.  Let $E$ be the disk
$\alpha\times [0,1]$ where $\alpha$ is the simple arc connecting the two holes of $P$.
Up to a small isotopy, we may assume that $S$ intersects $E$ transversely. 
Since the bubbles in $\bfT$ are in some neighborhood of $E$, we may assume that $S$ meets 
a bubble if it does so in $E$. The intersection $S\cap E$ comprises of simple closed 
curves and arcs. All curves and arcs except those connecting $\alpha\times \{0\}$ to 
$\alpha\times \{1\}$ can be eliminated by an isotopy pushing $S$ off $\bfT$. 
This isotopy decreases the number of bubbles $S$ meets. The number of bubbles the 
resulting surface meets equals the number of such arcs times the number of crossings 
in the corresponding twist region.

Up to isotopy, we may also assume that $S$ intersects $P\times \{ \tfrac{1}{2}\}$ 
transversely. Hence, $S\cap (P\times \{ \tfrac{1}{2}\})$ is a collection of simple 
closed curves and arcs. By pushing $S$ outwards towards the boundary of the disk $P$, 
one can assume that each component of $S\cap (P\times \{ \tfrac{1}{2}\})$ is of the 
following form:

\begin{enumerate} \setcounter{enumi}{-1}
    \item An arc connecting the boundary of the disk $P$ to itself separating the holes, 
    and intersecting $\alpha$ once.
    \vskip5pt
    \item An arc connecting a hole to the boundary of the disk and not intersecting $\alpha$.
    \vskip5pt
    \item An arc connecting the two holes and not intersecting $\alpha$.
\end{enumerate}
\vskip5pt
Thus, $S\cap (P\times [\tfrac{1}{2}-\varepsilon,\tfrac{1}{2}+\varepsilon])$ 
is a collection of disks of types (0),(1) or (2) as stated. By an ambient 
isotopy, we can stretch the slab
$P\times [\tfrac{1}{2}-\varepsilon,\tfrac{1}{2}+\varepsilon]$ to $P\times [0,1]=T$.
The number of bubbles the resulting surface meets equals the number of arcs of 
type (0) times the number of twist in the twist region. The arcs of type (0) 
are in one-to-one correspondence with the arcs of $S\cap E$.  Note that the fact 
that $\iota:\partial S \to L$ is a covering map was not  affected by the isotopies above.
\end{proof}

\begin{definition}\label{def: normal position}
A surface $S\subset \bbS^3 \ssm \NN(L)$ is \emph{in normal position} if its 
extension intersects the planes $P^\pm$ transversely and also intersects
each twist box  as specified in Lemma \ref{lem: normal form} and 
$\iota|_{\partial S}:\partial S \to L$ is a covering map. In particular, 
$S$ has no meridional boundary components.
\end{definition}

\begin{lemma}
Let $S \subset \bbS^3 \ssm \calN (L)$ be a surface in normal position, and let 
$(\bfT,t)$ be a twist box. Then, up to isotopy, each component of the intersection 
$S \cap \bfT \cap P^\pm$  looks as in Figure \ref{fig: three types of intersection}.
\qed
\end{lemma}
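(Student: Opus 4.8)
This should follow from Lemma~\ref{lem: normal form} essentially by inspection, so the plan is to restrict the normal-form disks to the spheres $P^\pm$ and enumerate the possibilities. Since $S$ is in normal position, Lemma~\ref{lem: normal form} gives that $S\cap\bfT$ is a disjoint union of disks $D$ of Type~$0$, $1$ or $2$; moreover, from the proof of that lemma, each $D$ may be taken to be a product disk $\delta\times[0,1]$, capped off along $\NN(t)$ where $\delta$ meets the holes, in the identification $\bfT\ssm\NN(t)\cong P_\circ\times[0,1]$ used there, with $P_\circ$ the twice-holed disk and $\delta\subset P_\circ$ an arc of type $(0)$, $(1)$ or $(2)$. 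It therefore suffices to describe $D\cap P^\pm$ for one disk $D$ of each type. As $\bfT$ is a ball, $P^\pm\cap\bfT$ is a properly embedded disk in $\bfT$ (a copy of the twist region $T$ with a small bump at each bubble), and since $S$ is transverse to $P^\pm$ the set $D\cap P^\pm$ is a properly embedded $1$-manifold in $D$: a union of circles and arcs with endpoints on $\partial D$.

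Next I would remove the circle components by a routine innermost-disk argument: an innermost circle of $D\cap P^\pm$ on $D$ bounds a subdisk $D_0\subset D$ with interior disjoint from $P^\pm$, which can be isotoped across the subdisk of $P^\pm\cap\bfT$ that it bounds; because $P^\pm\cap\bfT$ is not disjoint from $t$ one should take this subdisk in the complement of $\NN(t)$ and use that no $2$-sphere meets $L$ transversely in a single point, and one checks the resulting isotopy is supported in $\bfT$, creates no new bubble intersections, and keeps $\iota|_{\partial S}$ a covering map. After finitely many such moves $D\cap P^\pm$ consists only of arcs. To identify them, note that $P^+$ meets the cross-sections $P_\circ\times\{s\}$ in arcs $\gamma_s\subset P_\circ$, that these rotate by $\pi$ about the two holes as $s$ passes a crossing, and that $P^+$ follows the over-strand while $P^-$ follows the under-strand at the equator of the corresponding bubble. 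Since $D\cap P^+=\bigcup_s(\delta\cap\gamma_s)\times\{s\}$, running $s$ through the $k$ crossings and using that $\delta$ has type $(0)$, $(1)$ or $(2)$ yields exactly the three model arcs of Figure~\ref{fig: three types of intersection}; the analysis for $P^-$ is identical with the roles of the over- and under-strands exchanged.

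The only real work is in this last inspection: following the rotating arcs $\gamma_s$ through all $k$ bubbles, checking that every component of $D\cap P^\pm$ is one of the three pictured arcs---in particular ruling out an arc of $P^\pm\cap\bfT$ that cuts a bump off the disk $P^\pm\cap\bfT$, which one removes by a further isotopy---and verifying that $P^+$ and $P^-$ behave symmetrically with respect to the over/under data. The circle-removal step is routine but still requires the easy check that the isotopies are compatible with the conclusions of Lemma~\ref{lem: normal form}, the slight subtlety being that $P^\pm\cap\bfT$ meets the tangle $t$.
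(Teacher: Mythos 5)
Your argument is the natural elaboration of what the paper treats as immediate: the paper offers no proof at all (the lemma is stated with a \qed), regarding the conclusion as following from Lemma~\ref{lem: normal form} by inspection of the product structure of the twist box, which is exactly the route you take. Your filling-in of the details (removing circles of $D\cap P^\pm$ by an innermost-disk isotopy and tracking the cross-sectional arcs through the bubbles) is correct and consistent with the paper's intent.
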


\begin{figure}
   \centering
    \begin{overpic}[height=3.5cm]{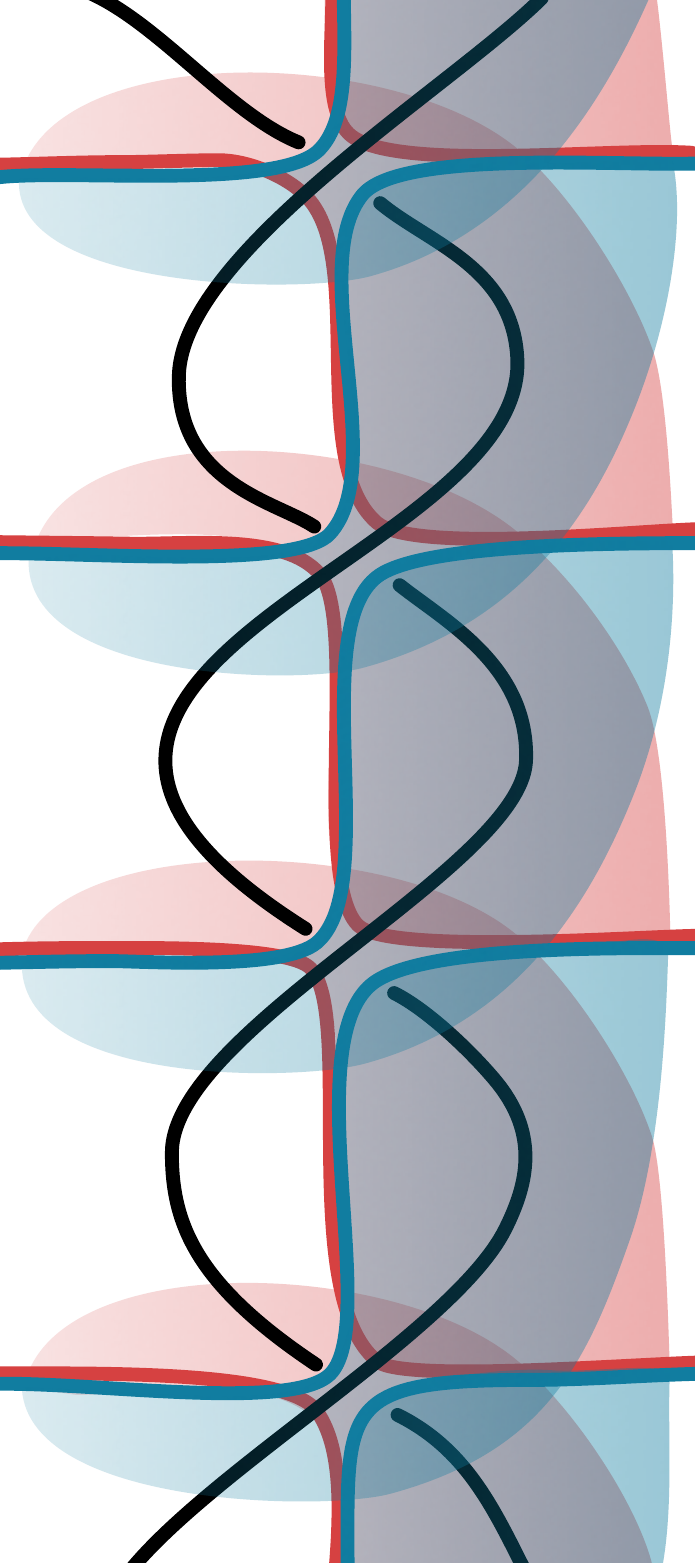}
    \put(5,-10){Type 0}
    \end{overpic}
        \hskip 1cm
    \begin{overpic}[height=3.5cm]{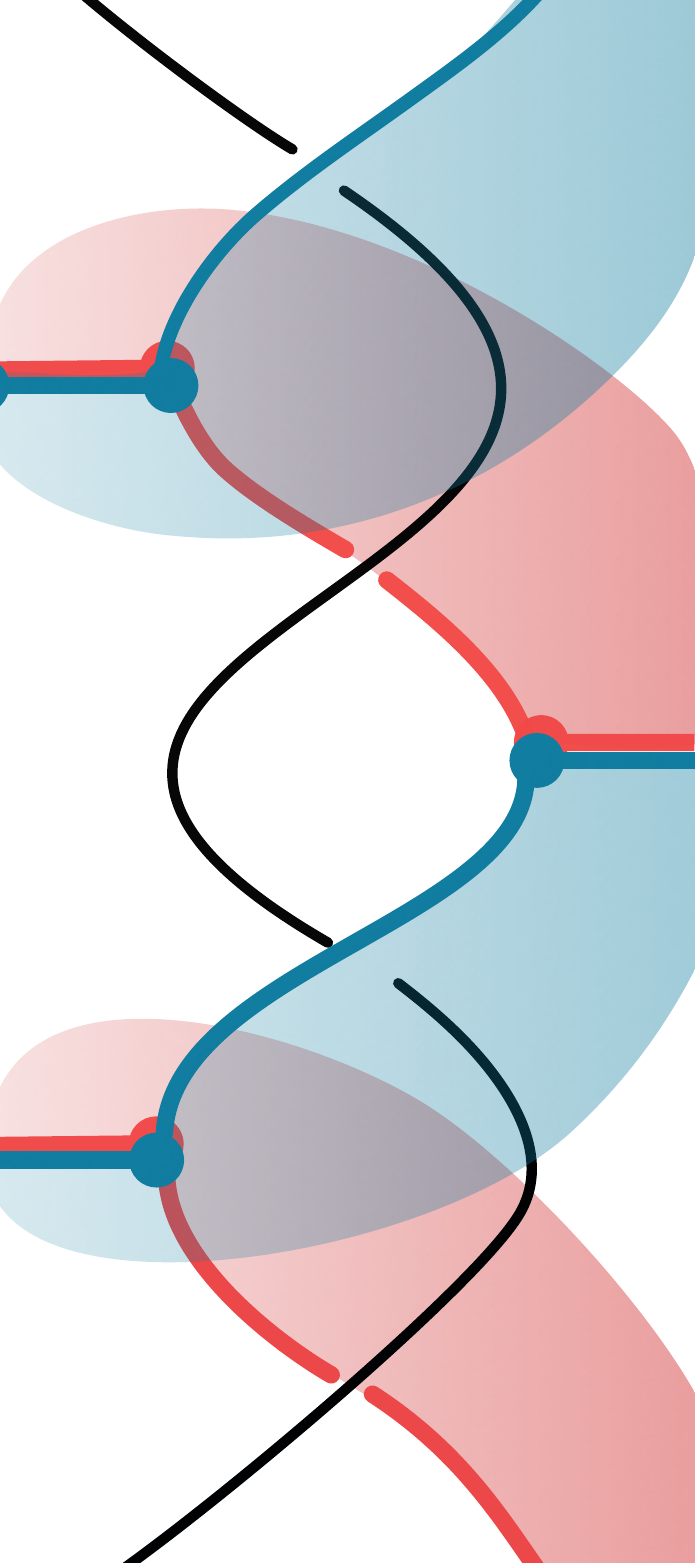}
    \put(5,-10){Type 1}
    \end{overpic}
        \hskip 1cm
    \begin{overpic}[height=3.5cm]{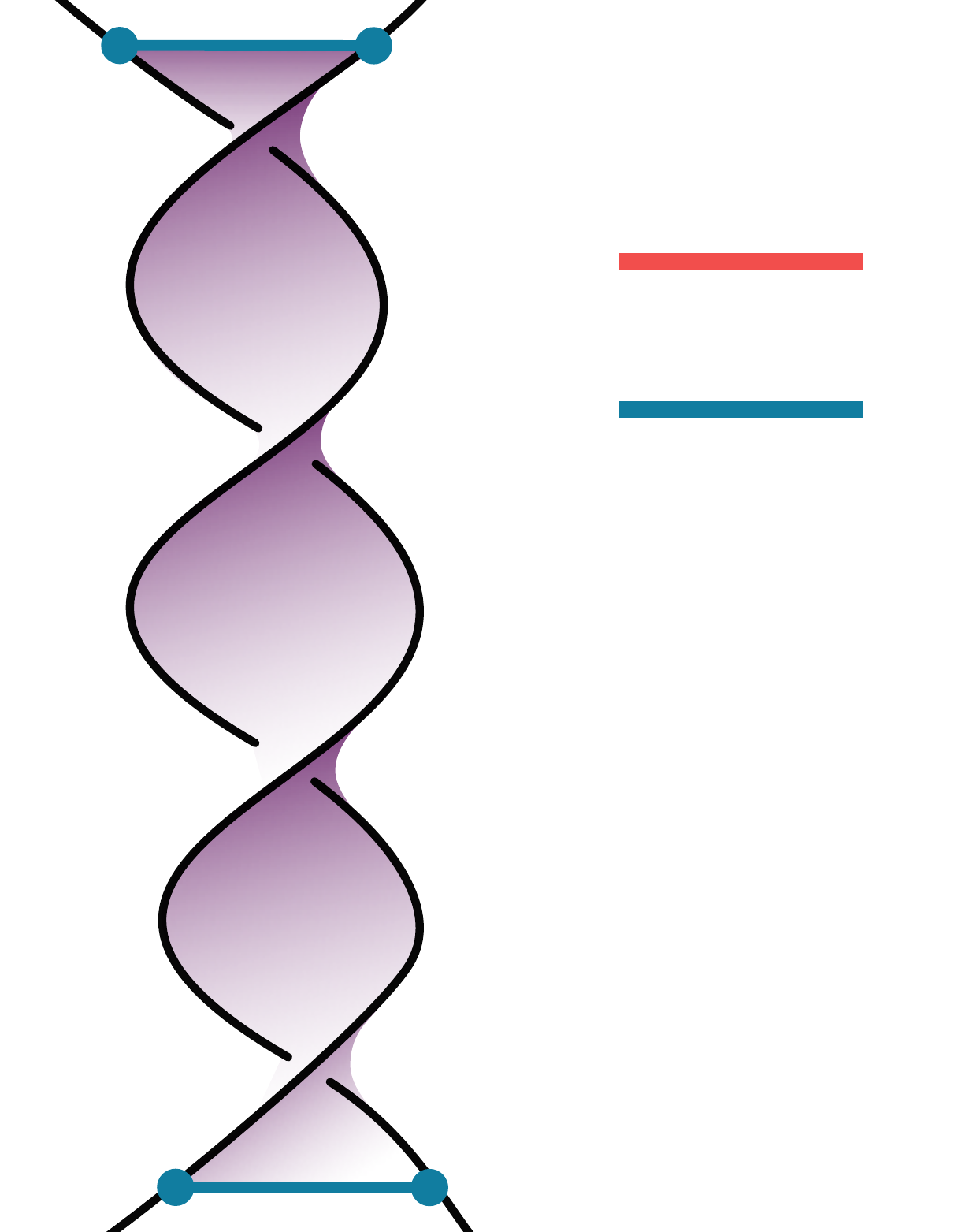}
    \put(5,-10){Type 2}
    \put(75,76){$S\cap P^-$}
    \put(75,63){$S\cap P^+$}
    \end{overpic}
    \medskip
    \caption{The possible three types of intersection of $S$ with a twist box.}
    \label{fig: three types of intersection}
\end{figure}

\subsection{Curves of intersection.}\label{subsec: Curves of intersection}
Let $S \subseteq \bbS^3 \ssm \NN(L)$ be a surface in normal position. 
We would like to study the surface $S$ through its curves of intersection 
with the planes $P^\pm$. 

Let $\calT$ be the union of all twist boxes of  $L$. Consider the collection 
of disks $\calD$ of Type (2) which occur as intersections $S\cap \calT$. 
We may assume that $\partial \calD \subset P \cup L$, and that the subsurface 
$\widehat{S}=S \ssm \calD$ is transversal to $P^\pm$.

Recall the map $\iota:(S,\partial S) \to (\bbS^3,L)$. 
Define $\calC^+ = \partial \iota\ii(\widehat{S} \cap H^+)$ and 
$\calC^- =  \partial  \iota\ii(\widehat{S} \cap H^-)$.
Now define $\calC =\calC^+ \cup \calC^-$.  As each of  $P^\pm$ is a 
$2$-sphere, $\widehat{S}\cap H^\pm$ is a 
collection of subsurfaces of $\widehat{S}$, the boundary of which are simple 
closed curves $c \subset S$. For $c\in \calC^+$, denote by $S_c$ the component 
of $\widehat{S} \cap H^+$ so that $c\subset \partial S_c$, and respectively for 
$c\in \calC^-$.

We think of curves in $\calC^\pm$ as curves on $P^\pm$, as they are disjoint outside $L$.
Here, and in most of the figures in the remainder of this paper, curves in $\calC^+$ 
are colored blue while curves in $\calC^-$ are colored orange.

Assume $S$ is in normal position, and let $\bfT$ be a twist box. The curves of 
intersection $\calC$ of $S$ which meet a connected component of $\iota(S) \cap \bfT$ 
must meet the corresponding twist region $T$ in one of the following three configurations:

    \vskip5pt

\begin{figure}[H] 
    \includegraphics[width = 0.5\textwidth]{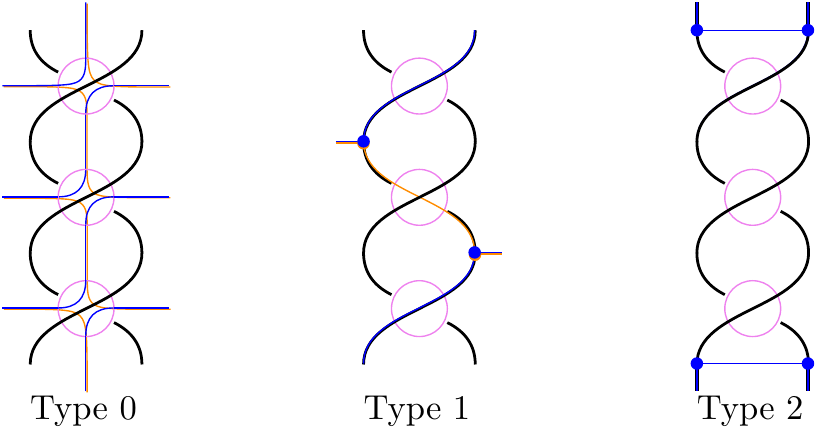}
\end{figure}

 In order to annalize  the curves  $c\in \calC$  we need to consider specific subarcs 
 and points of $c$ which we define next.

 \vskip5pt

\begin{definition}\label{def: c passes}\label{def: numbers}\label{def: joints}
For a curve $c\in \calC$, we define the following arcs and points (they are illustrated in 
the figures following the definition):
\begin{enumerate}
    \item A \emph{\I-joint} (``interior-joint'') of $c$ is a subarc of $c$ which is a 
    connected component of $c \cap \iota\ii(B)$ for some bubble $B$. The number of \I-joints 
    of $c$ is denoted $\bbl{c}$.
    \vskip5pt
    \item A \emph{\D-joint} (``boundary-joint'') of $c$ is an endpoint of a connected 
    component of $c\cap \partial S$. The number of \D-joints of $c$ is denoted $\bdr{c}$
    \vskip5pt
    \item A \emph{\joint} of $c$ is a \I-joint or a \D-joint of $c$. The number of joints 
    of $c$ is denoted by $\bbb{c}=\bbl{c}+\bdr{c}$.
    \vskip5pt   
    \item Define $\calC_{i,j} = \{ c\in\calC \mid \bbl{c}=i,\bdr{c}=j\}$.
    \vskip5pt
    \item A \emph{bone} of $c$ is a connected component of $c$ minus its joints. 
    Note that all bones are arcs which are mapped by $\iota$ to $P$.
    \vskip5pt
    \item A \emph{\D-bone} of $c$ is a bone which is contained in $\partial S$. 
    Note that the endpoints of \D-bones are \D-joints. All other bones are \emph{\I-bones}.
    \vskip5pt
    \item A \emph{limb}  of $c$ is a subarc $\alpha \subset c$ with endpoints in the 
    interiors of bones. Two limbs are equal if there is an isotopy of limbs (in $c$) 
    between them. In particular, their endpoints lie in the interior of the same 
    bones.  The quantities $\bbb{\alpha}$, $\bbl{\alpha}$ and $\bdr{\alpha}$ are 
    defined as for curves.
    \vskip5pt
    \item A \emph{turn} of $c$ is a limb of $c$ that contains exactly one joint, 
    this joint is an \I-joint and the endpoints of the limb are outside 
    twist regions.
    A curve \emph{turns} at a twist region if it contains a turn in that region. 
    \vskip5pt
    \item A \emph{wiggle} of $c$ is a limb of $c$ that contains exactly two joints, 
    these joints are \I-joints through consecutive bubbles of a twist box, and 
    the endpoints of the limb are outside the twist regions.
    A curve \emph{wiggles} through a twist region if it contains a wiggle in that region. 
    
    \begin{figure}[H]
    \includegraphics{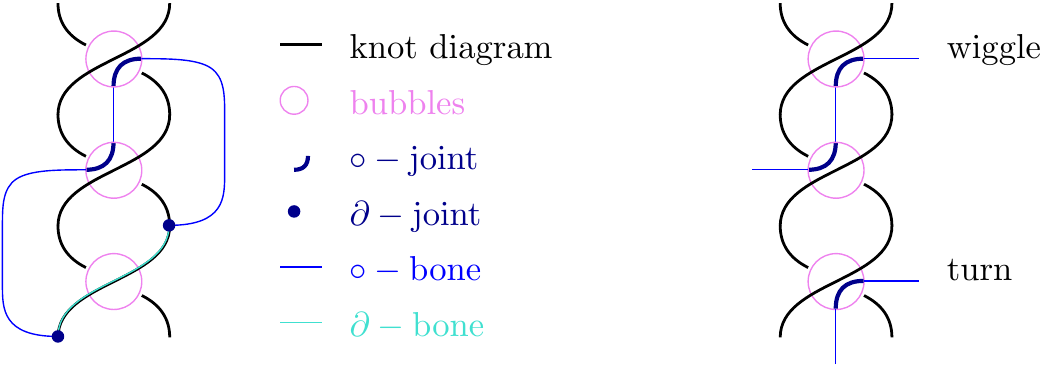}
    \end{figure} 
    
    \item Let $\calB$ be a 3-ball bounded by a bubble $B$, then the components of 
    $S\cap \calB$ are \emph{saddles}. Those are disks whose boundary is 
    $\iota(\alpha_1^+\cup \alpha_2^+ \cup \alpha_1^- \cup \alpha_2^-)$ where 
    $\alpha_i^\pm$ are \I-joints of curves in $\calC^\pm$ respectively. The 
    two \I-joints $\alpha_1^+,\alpha_2^+$ (and the two \I-joints $\alpha_1^-,\alpha_2^-$) 
    are said to be \emph{opposite}.
    
    \vskip7pt
    
    \begin{figure}[H]
    \includegraphics[width = 0.6 \textwidth]{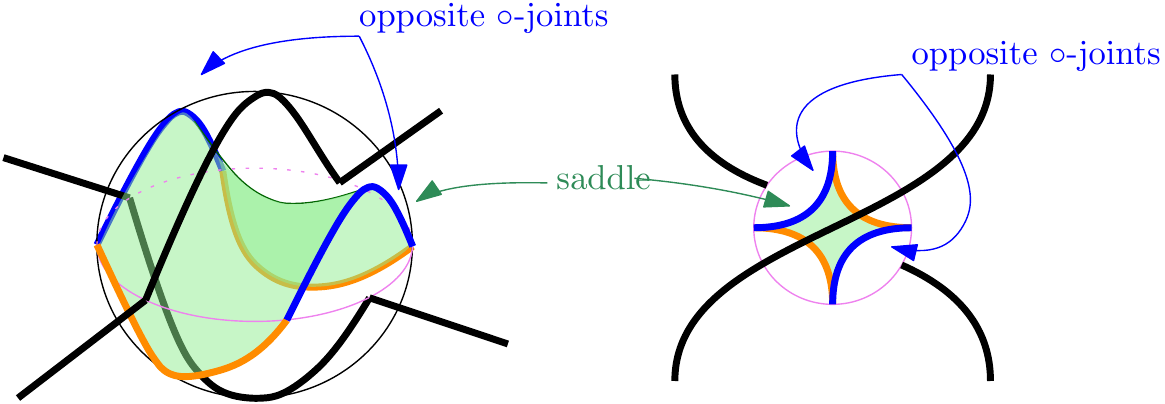}
    \caption{A saddle of $S$ in a bubble viewed from an angle and from the top.} 
\end{figure}

\end{enumerate}

\end{definition}

\begin{remark}\label{rem: joints and bones}
Note that a \D-joint connects a \D-bone and \I-bone, while a \I-joint connects 
two \I-bones. The \D-joints and \D-bones are contained in the boundary of $S$, 
while \I-joints and \I-bones are contained in the interior of $S$. 
\end{remark}

\begin{definition}
Two curves (or limbs of curves) $c,c'\in\calC$ are \emph{abutting} if they share 
an \I-bone and $c\ne c'$. Necessarily, if $c\in\calC^+$ then $c'\in\calC^-$ and vice versa. 
\end{definition}
The following figure shows an example of two abutting curves $c$ and $c'$.

\begin{figure}[H]
    \includegraphics[width = 0.32 \textwidth]{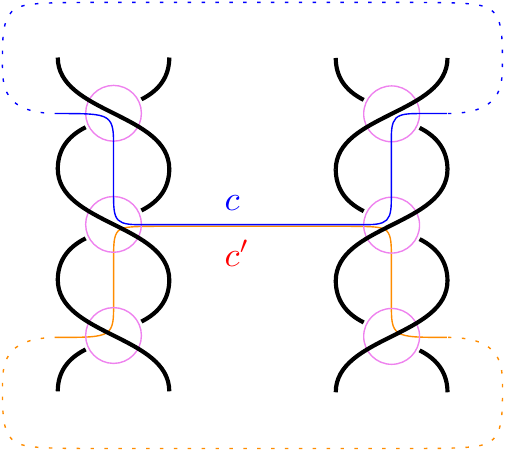}
\end{figure}

\subsection{Taut surfaces}\label{subsec: taut surfaces}
\begin{definition}\label{def: complexity} Given an incompressible surface 
$S \subset \bbS^3 \ssm \NN(L)$
we define a { \it lexicographic complexity} of $S$ as follows: 
\begin{equation}
\text{Com}(S) = (\sum_{c\in\calC} \bbl{c},\, 
\sum_{c\in\calC} \bdr{c},\, \abs{\mathcal{C}})
\end{equation}
\end{definition}

Recall that a  properly embedded surface $S$ in a $3$-manifold $M$ is called 
\emph{essential} if it is either  a 2-sphere which does not bound a 3-ball, 
or it is incompressible, boundary incompressible and not boundary parallel.

\begin{definition}
Let $S\subset \bbS^3 \ssm \NN(L)$ be an essential surface in normal position. 
The surface $S$ is \emph{taut} if
either
\begin{enumerate}[label=(\roman*)]
    \item $S$ is an essential 2-sphere, and $S$ minimizes complexity among all 
    essential 2-spheres, or
    \item $S$ is not a 2-sphere, the link $L$ is not split (i.e., $\bbS^3 \ssm L$ 
    is irreducible), and 
    $S$ minimizes complexity in its isotopy class.
\end{enumerate}
\end{definition}

The next lemma shows that the intersection curves of taut surfaces must have certain properties. 

\begin{lemma}\label{lem: properties of curves}
Assume that the diagram $D(L)$ is connected. 
Let $S\subset \bbS^3 \ssm \NN(L)$ be a taut surface. Then, for all $c \in \calC$ we have:
\vskip5pt
\begin{enumerate}
    \item $S_c$ is a disk.
    \vskip5pt
    \item $\bdr{c}$ is even. 
    \vskip5pt
    \item If $\bdr{c}\le 2$ then $\bbl{c} >0$.
    \vskip5pt
   \item  If $\bdr{c}=0$ then $\bbl{c}$ is even.
   \vskip5pt
   \item If a curve $c$ meets a bubble $B$ more than once, then 
   it does so in two opposite \I-joints.
   \vskip5pt
   \item If a curve $c$ has two \D-joints on a connected component of $P^+ \cap L$ 
   (or $P^- \cap L$), then they are the endpoints of a \D-bone. Moreover, the two 
   \I-bones incident to them are in different  regions of $P\ssm D(L)$.
   \vskip5pt
   \item The curve $c$ is not a curve in $\calC_{1,2}$ as depicted in the following figure:
   \begin{figure}[H]
       \includegraphics[width = 0.12 \textwidth]{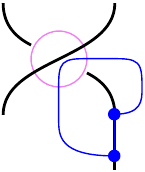}
   \end{figure}
  
\end{enumerate}
\end{lemma}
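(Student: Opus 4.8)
The plan is to prove all seven items by contradiction, in every case playing the tautness of $S$ — minimality of $\text{Com}(S)$ in its isotopy class, or among essential $2$-spheres — against one of three simplifications: a compression of $S$ (contradicting incompressibility), a $\partial$-compression (contradicting $\partial$-incompressibility), or an ambient isotopy of $S$ that strictly lowers $\text{Com}(S)$ lexicographically. The engine throughout is the innermost-disk argument on the spheres $P^\pm$: a simple closed curve lying on $P^+$ (say) that bounds a disk $E\subset P^+$ with interior disjoint from $S$ must, by incompressibility, bound a disk $E'$ on $S$; then $E\cup E'$ bounds a ball, and pushing $S$ across it removes an intersection curve while meeting no new bubbles or boundary arcs, so $\text{Com}(S)$ strictly drops (and, in the sphere case, $S$ stays essential or else we contradict essentiality). \emph{For (1) and (2):} having first ruled out every compression and $\partial$-compression of $S$ along curves of $\calC^\pm$ by this mechanism, the subsurface $S_c\subset H^\pm$ is incompressible and $\partial$-incompressible in the ball $H^\pm$, hence each of its components, being a compact surface with boundary and trivial $\pi_1$, is a disk; in particular $S_c$ is a disk. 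For (2), after perturbing $S$ to be transverse to $P^\pm$ and self-transverse, $c\cap\partial S$ is a disjoint union of arcs — the \dbones of $c$ — each contributing exactly two endpoints to $\bdr c$; a circle component would force $\iota(c)\subset L$, which is impossible since $\iota|_{\partial S}$ is a covering map onto $L$ and $S$ has no meridional boundary. Hence $\bdr c$ is even.

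\emph{For (3), (5), (6), (7)} each claim amounts to locating a ``bad'' local configuration and killing it. If $\bbl c=0$ and $\bdr c=0$, then $c\subset P^\pm$ is disjoint from $L$ and from all bubbles, so it bounds a disk on the sphere $P^\pm$ and the innermost-disk mechanism applies directly; if $\bbl c=0$ and $\bdr c=2$ (the only other case, by (2)), then $c$ is one \dbone on $L$ together with one \ibone $\beta$ lying in a single region of $P\ssm D(L)$, and the disk $S_c$ from (1) is then a genuine $\partial$-compressing disk for $S$, unless $\beta$ is $\partial$-parallel in $S$, in which case $S_c$ yields a complexity-reducing isotopy — this proves (3). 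For (5), two joints of $c$ at a bubble $B$ that are not the opposite pair of a single saddle cut off an innermost disk in the hemisphere $B^\pm$; completing its boundary along $c$ and invoking (1) produces a ball through which $S$ can be pushed, deleting both passages through $B$ and lowering $\sum_{c}\bbl c$. For (6), if two \djoints of $c$ lying over the same component $\lambda$ of $L\cap P^\pm$ were not the endpoints of one \dbone, or if the two \ibones meeting them lay in the same region of $P\ssm D(L)$, then $c$ together with a sub-arc of $\lambda$ (lifted using the covering $\iota|_{\partial S}$) would cobound, via $S_c$, an isotopy of $S$ reducing $\abs{\calC}$ or $\sum_c\bdr c$ without increasing $\sum_c\bbl c$. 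For (7), a curve $c\in\calC_{1,2}$ of the depicted shape bounds (by (1)) a disk $S_c$ which, in that configuration, is compressible, $\partial$-compressible, or yields an isotopy of $S$ striking one fewer bubble. In every case tautness is contradicted.

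\emph{For (4):} with $\bdr c=0$ every joint of $c$ is an \ijoint, so $\bbb c=\bbl c$ and $c$ is a cyclic alternation of \ibones and \ijoints. Viewing $c$ as a null-homologous simple closed curve on the sphere $P^\pm$, its \ibones avoid the $1$-complex cut out on $P^\pm$ by $D(L)$ together with the saddle arcs in the bubbles, while each \ijoint crosses that complex exactly once; since $c$ is separating on the sphere, this mod-$2$ count vanishes, so $\bbl c$ is even. (Alternatively, an odd number of passages of $c$ through a fixed bubble — or through all bubbles in total — clashes with the two-to-one pairing of \ijoints on each saddle, together with (5).)

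The main difficulty is not any individual move but the uniform bookkeeping: one must check that every surgery or isotopy used strictly decreases $\text{Com}(S)$ in the lexicographic order — never increasing $\sum_c\bbl c$ while decreasing a later coordinate, and never increasing it when ``trading'' \ijoints for \djoints — and that the $2$-sphere surgeries preserve essentiality (or else contradict it outright). Among the seven, (4) is the only claim not of pure innermost-disk type, and making its parity count airtight — pinning down exactly which $1$-complex on $P^\pm$ records the bubble passages and verifying that $c$ is disjoint from it away from its \ijoints — is the delicate point I would spend the most care on.
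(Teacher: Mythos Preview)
Your overall strategy --- reduce every item to a complexity-lowering compression, $\partial$-compression, or isotopy, playing against tautness --- is exactly the paper's, and items (1), (2), (4), and (6) go through essentially as you describe. Your parity argument for (4) is the paper's checkerboard-coloring argument in disguise: the regions of $P\ssm D(L)$ two-color, each \ijoint flips the color, and a closed curve must flip an even number of times.

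There are, however, two genuine gaps.

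\textbf{Item (5).} Your sketch ``two joints not opposite on a saddle cut off an innermost disk on $B^\pm$, push across it'' does not separate the cases, and the cases behave differently. The paper distinguishes: (a) two \dbones of $c$ on $B$ --- here one uses an arc in $S_c$ joining them together with an arc on $\partial\NN(L)$ to build a boundary compression; (b) one \ijoint and one \dbone --- here an arc in $S_c$ and an arc on $B$ bound a disk giving a complexity-reducing isotopy; (c) two \ijoints on the same side of $L\cap B^+$ --- this is Menasco's Lemma 1(ii), and while it \emph{is} an innermost-disk argument, you have to check the isotopy does not create new intersections on the $B^-$ side; (d) two \ijoints on opposite sides of $L\cap B^+$ but on different saddles --- here one argues that any curve of $\calC^+$ passing between one \ijoint and $L$ must, to close up, also pass between the other \ijoint and $L$, so the counts match and the two \ijoints lie on the same saddle. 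Your single sentence does not cover (a), (b), or (d), and leaves (c) unverified.

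\textbf{Item (7).} This is the real miss. Your assertion that $S_c$ ``is compressible, $\partial$-compressible, or yields an isotopy striking one fewer bubble'' is not an argument, and it is not clear any such direct move exists from the picture of $c$ alone. The paper's mechanism is indirect and is the key idea you are missing: take $c$ innermost of its kind (using (6) to see anything inside it is of the same form), and look at the \emph{abutting} curve $\bar c$. In the depicted configuration $\bar c$ is forced to meet the same bubble in both a \dbone and an \ijoint --- exactly case (b) above --- so (5) kills it. The contradiction comes not from $c$ but from its neighbor.

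A minor point on (3): in the $\bdr c=2$, $\bbl c=0$ case you write that ``$S_c$ is a genuine $\partial$-compressing disk for $S$,'' but $S_c$ is a subsurface of $S$, not a disk disjoint from $S$. What you want is the disk in $P$ bounded by the \ibone $\beta$ together with a subarc of $L$; \emph{that} disk gives the $\partial$-compression or isotopy. The idea is right, the object is misnamed.
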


\begin{proof} Let $S\subset \bbS^3 \ssm \NN(L)$ be an essential surface 
satisfying  (i) or (ii). Note that in both cases, compressing along a disk 
$D\subset \bbS^3 \ssm \NN(L)$ with $D\cap S = \partial D$ results either 
in two essential spheres, or a surface in the same isotopy class of $S$. 
Thus, by the assumption on $S$, surfaces obtained by such a compression 
cannot have lower complexity.
\vskip5pt

\noindent (1) Since $S$ is essential, each subsurface $S_c$ must be planar, 
as otherwise it contains a non-trivial compression disk. If $S_c$ has more 
than one boundary component then compressing along a disk in $H^+$ or $H^-$ 
whose boundary separates boundary components of $S_c$ will result in a surface 
with fewer  intersections  with $P$ in contradiction to the choice of $S$.
\vskip5pt

\noindent (2) By definition, $\bdr{c}$ is the number of endpoints of arcs
in $c\ssm L$. Since each arc has two endpoints, $\bdr{c}$ is even.
\vskip5pt

\noindent (3) By (2) $\bdr{c}$ is either two or zero. If $\bdr{c} = 0$ and 
$\bbl{c} = 0$ then $c$ bounds a disk on $P \ssm L$. Compressing $S$ along 
this disk reduces the number of intersections with $P$. 

If $\bdr{c} = 2$ and $\bbl{c} = 0$ then $c$ bounds a disk $D$ in $P$ such that 
$\partial D = \alpha \cup \beta$,  where $\alpha$ is a \D-bone of $c$ and $\beta$ 
is an \I-joint of $c$. However, this is impossible by (2).

\vskip3pt

\noindent (4) The diagram $D(L)$ is a 4-regular graph, and thus it partitions $P$ 
into regions which can be given a checkerboard coloring, i.e., can be colored black 
and white so that two adjacent regions are colored in different colors. Consider 
the colors of complementary regions of $P \ssm \mathcal{T}$  which the curve $c$ 
intersects.  If $\bdr{c}=0$ every change of colors, of these regions along $c$, 
accounts for one bubble that $c$ meets. Since $c$ is a closed curve the total 
number of color changes is even, and correspondingly $\bbl{c}$  is even.

\vskip3pt

\noindent (5) Without loss of generality assume that $c\in\calC^+$. The curve $c$ can 
meet $B$ in two \D-bones, an \I-joint and a \D-bone or two \I-joints. If $c$ meets 
$B$ in two \D-bones, then the disk $S_c$ contains an arc connecting the two \D-bones. 
This arc together with an arc on $\partial \NN (L)$ bounds (by an innermost argument) 
a compression disk for $S$. Compressing along this disk reduces the complexity of $S$.

If $c$ meets $B$ in an \I-joint and a \D-bone, then the disk $S_c$ contains an arc 
connecting the \I-joint and the \D-bone. This arc, together with an arc on $B$ 
bounds a disk. Isotoping $S$ through this disk reduces the complexity of $S$.

Thus, $c$ meets $B$ in two \I-joints. By Lemma 1(ii) of \cite{Menasco}, $c$ does 
not have two \I-joints in $B$ on the same side of $L$, i.e., as in the following figure.

   \begin{figure}[H]
       \includegraphics[width = 0.2\textwidth]{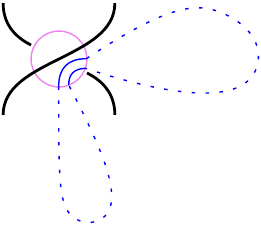}
   \end{figure}
    It follows that $c$ has at most two \I-joints in the same bubble, and 
    they are separated by $L$ in $B^+$. The number of components 
    of $S\cap B^+$ separating each of the  \I-joints of $c$ from $L$ 
    is the same: Each component of $S\cap B$ separating an \I-joint of $c$ and 
    $L$ belongs to a curve $c'$ in $\calC^+$. As curves in $\calC^+$ do not 
    intersect, in order to close up, $c'$ has to return to $B$ on the other 
    side of $L$ between $L$ and the other \I-joint of $c$ in $B$. This is 
    depicted in the following figure. 
    \begin{figure}[H]
       \includegraphics[width = 0.24\textwidth]{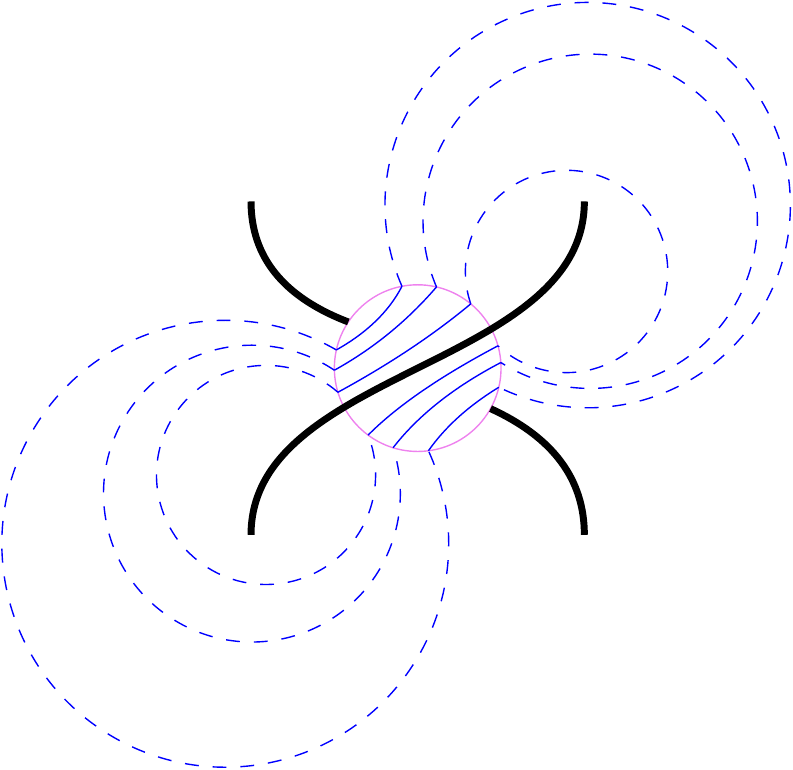}
   \end{figure}
   
   The intersection of $S$ with the ball bounded by $B$, is a finite collection 
   of stacked saddles, it follows that the \I-joints of $c$ belong to the same saddle.
\vskip5pt
    
\noindent (6) If $c$ has two \D-joints on the same component of $P^+ \cap L$. Let $\alpha$ 
in $P^+\cap L$ and $\beta$ in  $S_c$ be  arcs connecting the two \D-joints. By compressing 
along the disk bounded by $\alpha\cup \beta$ (using an innermost such disk) we reduce 
complexity unless the arc $\alpha$ is a \D-bone of $c$.

\vskip5pt

If $c$ contains a \D-bone $\alpha$ such that the two adjacent \I-joints are in the same 
region of $P\ssm D(L)$ then, by pushing $S$ through $P$ in a neighborhood of $\alpha$, we 
reduce the number of intersection points by 2, in contradiction to the minimal complexity 
of $S$.

\vskip5pt

\noindent (7) Assume in contradiction that $c$ is such a curve. By (6), any curve 
contained in $c$ has to be of a similar form. Assume $c$ is an innermost such curve. 
The curve $\bar c$ abutting $c$, as depicted in the following figure, meets the bubble 
twice, at a \D-bone and an \I-joint, in contradiction to (5).
\begin{figure}[H]
    \centering
    \includegraphics[width = 0.12 \textwidth]{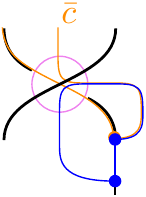}
\end{figure}

\end{proof}

\begin{remark}\label{rem: Not all C}
Note that if $S$ is taut then 
    $\calC_{0,0}=\calC_{0,2}=\calC_{i,2k+1}=\calC_{2k+1,0}=\emptyset$ for all 
    $i,k\in\bbN\cup \{0\}$.
\end{remark}

\medskip

\section{Euler Characteristic and curves of intersection} \label{sec: Curves and the 
Euler Characteristic}

\vskip10pt
From now on we assume that the surface $S$ is taut.

\subsection{Distributing Euler characteristic among curves}

For each curve $c\in \calC$ we will define the \emph{contribution} of $c$, and show 
that the Euler characteristic of $S$ can be computed by summing up the contributions 
of curves $c\in\calC$. 

\begin{definition}\label{def: contribution}
The \emph{contribution} $\chi_+(c)$  of a curve $c\in\calC$ is defined by
$$\chi_+(c)=1 - \frac{1}{4}\bbb{c}.$$
\end{definition}

\begin{lemma}\label{lem: Euler characteristic from Euler contributions}
If $S \subset \bbS^3 \ssm \NN(L)$ is taut then $\chi(S)=\sum_{c\in\calC} \chi_+(c)$.
\end{lemma}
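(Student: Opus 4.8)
The plan is to compute $\chi(S)$ via a CW decomposition of $S$ built from the curves of intersection $\calC$ together with the disks $\calD$ of Type (2), and then to bookkeep the contributions of vertices, edges, and faces and attribute them to the curves $c \in \calC$. First I would fix the decomposition: the curves of $\calC$, the arcs $\partial S \cap (P^\pm \cup \text{bubbles})$, and the images $\iota(\partial\calD)$ cut $\widehat S = S \ssm \calD$ into pieces lying in $H^+$ and $H^-$. By Lemma~\ref{lem: properties of curves}(1), each component $S_c$ is a disk, so $\widehat S$ is tiled by the disks $\{S_c : c \in \calC\}$ glued along the curves and arcs of $\calC$ and of $\partial S$. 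Re-attaching the Type~(2) disks $\calD$ then recovers $S$. The key observation is that each Type~(2) disk is glued to $\widehat S$ along exactly two \I-bones (one from a curve in $\calC^+$, one from a curve in $\calC^-$) and two boundary arcs, so it behaves like an edge-like "thickening" that does not change the Euler-characteristic count once we assign its contribution correctly; alternatively, one checks that such a disk contributes $0$ net to $\chi$ after the relevant identifications, since $\chi$ of a disk is $1$ but it is glued along two arcs to the rest and the count of the resulting cells balances out.

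The heart of the argument is the local accounting. I would set up the computation as $\chi(S) = V - E + F$ where $F$ is the number of disks $S_c$ (plus Type~(2) disks), $E$ is the number of bones (\I-bones and \D-bones) plus some edges coming from joints, and $V$ is the number of joints (\I-joints counted once each, although each saddle \I-joint is shared by curves on both sides — care needed there) plus \D-joints. The cleanest route is: each joint of each curve $c$ is a point where $c$ passes from one bone to the next; a curve $c$ with $\bbb{c} = \bbl c + \bdr c$ joints is divided into $\bbb c$ bones (this uses that $c$ is a simple closed curve, so #bones $=$ #joints along it). So summing over $c \in \calC$: the total number of (bone, curve) incidences is $\sum_c \bbb c$, and each \I-bone lies on exactly two curves (abutting, one in $\calC^+$ and one in $\calC^-$) while each \D-bone lies on exactly one curve. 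Similarly each \I-joint inside a bubble is an arc on the boundary of exactly one saddle, and the saddle is a disk whose boundary consists of four \I-joints (two opposite pairs); each \D-joint is an endpoint of a \D-bone, hence shared between a \D-bone and an \I-bone but belonging to one curve.

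Thus the scheme is to express $V$, $E$, $F$ each as a sum over $c \in \calC$ of local quantities, with the sharing multiplicities absorbed into coefficients. A disk $S_c$ has $\chi = 1$; it is bounded by a cyclic sequence of bones and joints of $c$. The bubbles contribute the saddle-disks, one per bubble, each a square ($\chi$ contribution handled by the four \I-joints on its boundary), and the regions of $P^\pm \ssm \calC$ do not appear as cells of $S$ at all — they are just the ambient planes. Carrying out the inclusion–exclusion, each bone shared by two curves is over-counted by a factor $2$ in $\sum_c(\text{edges along }c)$, and I expect precisely the $\tfrac14$ coefficient in $\chi_+(c) = 1 - \tfrac14\bbb c$ to emerge because each joint is "responsible" for one unit that, after the $\pm$-sharing and the saddle-gluing, is divided four ways (two curves $\times$ two sides of the saddle for \I-joints; and an analogous factor for \D-joints coming from the fact that a \D-bone with its two endpoints, together with the meridional identification $\iota|_{\partial S}$ being a covering map, redistributes boundary Euler characteristic — here $\chi(\partial S) = 0$ kills the boundary contributions). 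I would make this precise by writing $\chi(S) = \sum_{\text{faces}} 1 - \sum_{\text{edges}} 1 + \sum_{\text{vertices}} 1$ and then re-grouping: $\sum_{c} 1$ for the disk $S_c$, minus the bone/joint edges counted with their sharing, plus the joint vertices counted with their sharing, and verifying algebraically that the net result per curve is $1 - \tfrac14\bbb c$.

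The main obstacle I anticipate is the correct treatment of the Type~(2) disks $\calD$ and of the saddles simultaneously: one must be sure that every cell of $S$ is accounted for exactly once and that the gluing identifications (an \I-bone glued to its abutting partner, a saddle glued along its four \I-joints, a Type~(2) disk glued along two \I-bones and two boundary arcs) are each counted with the right multiplicity, and that nothing on $\partial S$ is mishandled — this is where the hypothesis that $\iota|_{\partial S}$ is a covering map and Lemma~\ref{lem: properties of curves}(2),(6) are used, to guarantee $\partial S$ decomposes into \D-bones and \D-joints consistently with the curve combinatorics. Once the bookkeeping is set up correctly, the identity $\chi(S) = \sum_{c\in\calC}\chi_+(c)$ should follow by a direct, if slightly tedious, count.
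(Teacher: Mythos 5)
Your overall strategy is the paper's: build the graph $X\subset S$ from the curves of $\calC$, observe that it cuts $S$ into the disks $S_c$, the saddles, and the Type~(2) disks, collapse (or otherwise neutralize) the Type~(2) disks, and then compute $\chi(S)=|X^0|-|X^1|+\#\{\text{faces}\}$ with each cell's contribution divided among the curves incident to it according to its sharing multiplicity. The multiplicities you do state correctly (\I-bones shared by exactly two abutting curves, \D-bones on one curve, each saddle a square whose four boundary \I-joints each belong to one curve, Type~(2) disks collapsible to an edge) are exactly the ones the paper uses.

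However, the proposal stops exactly where the proof begins: the algebraic count that produces the coefficient $\tfrac14$ is deferred ("I expect... to emerge", "verifying algebraically that the net result per curve is $1-\tfrac14\bbb{c}$"), and the inputs you would feed into it are not all right. Two concrete problems. First, you assert that a \D-joint belongs "to one curve"; in fact each \D-joint is a vertex of $X$ shared by the \emph{two} abutting curves (one in $\calC^+$, one in $\calC^-$) whose common \I-bone ends there, which is precisely why the paper gets $|X^0|=\sum_c(\bbl{c}+\tfrac12\bdr{c})$. With your multiplicity the boundary terms come out as $+\tfrac14\bdr{c}$ instead of $-\tfrac14\bdr{c}$, and the lemma fails for surfaces with boundary; the hand-wave that "$\chi(\partial S)=0$ kills the boundary contributions" does not repair this, since the boundary cells do not cancel internally --- they enter the count with genuine nonzero coefficients that happen to combine with the interior ones to give $1-\tfrac14\bbb{c}$. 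Second, you repeatedly treat \I-joints as points ("$V$ is the number of joints", "each joint... is a point where $c$ passes from one bone to the next"), whereas they are arcs through bubbles, i.e.\ \emph{edges} of $X$ contributing two vertices each (each such vertex again shared by two curves); a curve $c$ therefore traverses $2\bbl{c}+\bdr{c}$ edges, not $\bbb{c}$. These are not cosmetic slips: the entire content of the lemma is this bookkeeping, and as set up it would not close. You need to fix the \D-joint and \I-joint-endpoint multiplicities and then actually carry out the sum, as in equations (4.2)--(4.6) of the paper.
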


\begin{proof}
The union of the collection of all the curves $c\in\calC$ on $S$ is an embedded 
graph $X$ of $S$. The vertices $X^0$ of the graph $X$ are the \D-joints and the 
endpoints of \I-joints. The edges $X^1$ of the graph $X$ are the bones and the 
\I-joints. The graph $X$ partitions $S$ into disk regions of three types:

\begin{enumerate}
    \item  subsurfaces $S_c \subseteq \widehat{S} \cap H^\pm$ for $c\in \calC^\pm$, 
    \smallskip
    \item  saddles $R \subseteq \widehat{S}\cap \calB$ where $\calB$ is a 3-ball 
    bounded by a bubble, or
    \smallskip
    \item regions $D \subset S$ corresponding to Type (2) disks.
\end{enumerate}

In case (3), the regions $D$ are disks whose boundary consists of two arcs on $L$ 
and two edges of $X$. By collapsing  each such disk $D$ to one of the edges in $X$ 
we get a homotopic surface. By abuse of notation, we call it $S$, and call the 
corresponding graph $X$. Note that in the new surface, $\partial S \subset X$ 
and consists of circles comprised of \D-bones and \D-joints. Moreover, along every 
\I-bone there are two abutting curves.
It follows that 
\begin{align}\label{eq: chi S using the graph}
\begin{split}
    \chi(S) &= \chi(X) + \sum_{S' \subseteq S \cap H^\pm} \chi(S') + 
    \sum_{R \subseteq S\cap B} 
    \chi(R)\\
    &= |X^0| - |X^1| + \sum_{S' \subseteq S \cap H^\pm} \chi(S') + 
    \sum_{R \subseteq S\cap B} \chi(R).
\end{split}
\end{align}
We compute how each $c\in\calC$ contributes to each of the summands in 
\eqref{eq: chi S using the graph}: 

\medskip

\underline{The vertices of $X$.} Every curve 
$c\in\calC$ passes through $2\bbl{c}$ vertices of $X^0$ in the interior of 
$S$ (those are the endpoints of \I-joints it passes). Furthermore, it goes through 
$\bdr{c}$ vertices of $X^0$ in $\partial S$. Each of these vertices 
belongs to two (abutting) curves $c\in \calC$. Hence,

\begin{equation}
    |X^0| = \sum_{c\in\calC} (\bbl{c} + \tfrac{1}{2} \bdr{c})
\end{equation} 

\underline{The edges of $X$.} Every curve $c\in \calC$ passes through 
$2\bbl{c} + \bdr{c}$ edges in $X^1$. Note that every \I-joint edge and every 
\D-bone edge belongs to exactly one curve in $\calC$, while each \I-bone edge 
in belongs to two curves in $\calC$. Each \I-joint edge appears in exactly one 
curve $c$ and is counted once in $\bbl{c}$. Hence, the number of \I-joint edges 
is $\sum_{c\in \calC} \bbl{c}.$  Similarly  each \D-bone edge appears in exactly 
one curve $c$ and is counted twice in $\bdr{c}$. Hence, the number of \D-bone 
edges is $\sum_{c\in \calC} \tfrac{1}{2} \bdr{c}.$
Finally, each \I-bone edge in $c$ accounts for two vertices in $X^0$. So the number 
of \I-bone edges is equal to  
 $$\tfrac{1}{2} |X^0| =  
 \tfrac{1}{2}(\sum_{c\in\calC} (\bbl{c} + \tfrac{1}{2} \bdr{c})).$$
Adding these contributions together gives
\begin{equation}
    |X^1| = \sum_{c\in\calC}( \tfrac{3}{2}\bbl{c} + \tfrac{3}{4} \bdr{c}).
\end{equation} 

\underline{Regions $S' \subset S\cap H^\pm$.} To every curve $c\in\calC$ there is a disk 
$S_c \subseteq S\cap H^\pm$. Thus,
\begin{equation} 
    \sum_{S' \subseteq S \cap H^\pm} \chi(S) = \sum_{c\in\calC} 
    1.
\end{equation}

\underline{Saddle regions $R\subset S\cap \calB$.} Each \I-joint of a curve $c\in \calC$ 
belongs to the boundary such a region. And so each curve passes through the 
boundary of $\bbl{c}$ such regions. As each saddle  region has four \I-joint 
in its boundary, we have
\begin{equation}
    \sum_{R \subseteq S\cap B} \chi(R) = \sum_{c\in C} \tfrac{1}{4}\bbl{c}.
\end{equation}

Summing over all of the above we get,
\begin{align*}
\chi(S) &= |X^0| - |X^1| + \sum_{S' \subseteq S \cap H^\pm} \chi(S') + 
\sum_{R \subseteq S\cap B} 
\chi(R)\\
&= \sum_{c\in \calC} (1 - 
\tfrac{1}{4}(\bbl{c} + \bdr{c}))\\
&= \sum_{c\in \calC} \chi_+(c).
\end{align*}
\end{proof}

\section{Redistribution of Euler characteristic}
\smallskip

In this section we redistribute the positive contribution of the Euler characteristic of 
curves, $\chi_+$, so that after the redistribution each curve's contribution is non-positive.

We first characterize the curves of intersection that have a 
positive $\chi_+$. The characterization is done in the following lemma:

\begin{lemma}\label{lem: possible positive curves}
 Let $c\in\calC$ so that $\chi_+(c)>0$ (i.e., $\bbb{c}<4$) then $c\in \calC_{2,0}$ or 
 $c\in\calC_{1,2}$ and it is one of the following six forms (up to 
 isotopy).
 
\smallskip

 \begin{figure}[H]
 \includegraphics[width =\textwidth]{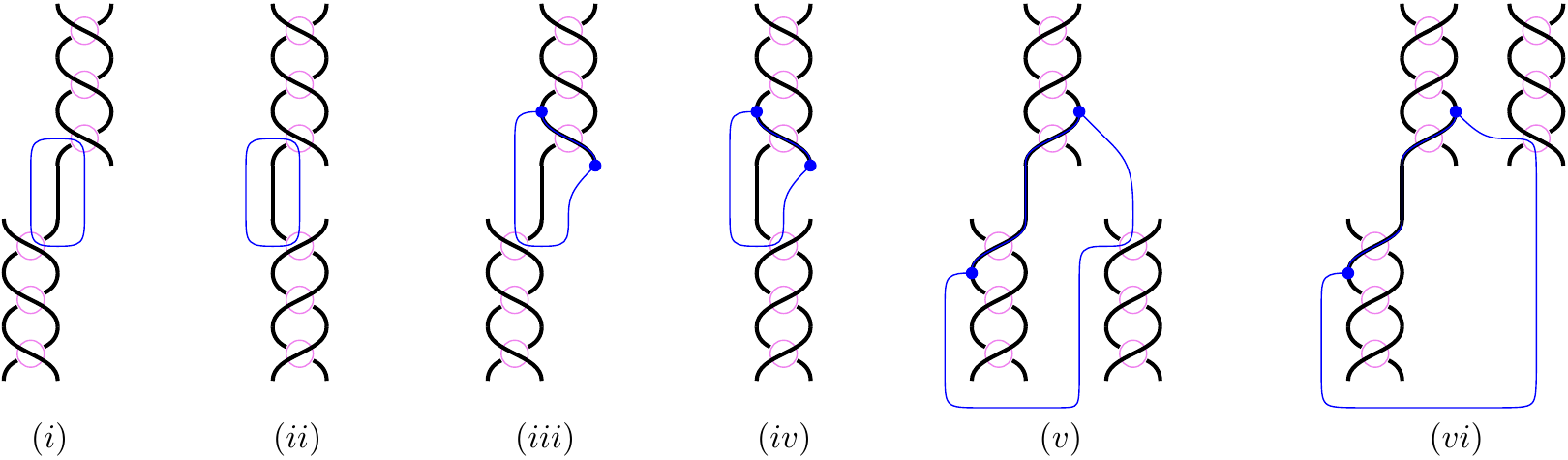}
 \caption{The six possibilities for a curve in 
 $c\in\calC_{2,0}\cup \calC_{1,2}$}\label{fig: positive curves}
 \end{figure}
\end{lemma}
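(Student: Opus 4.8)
The plan is to first pin down the numerical type $(\bbl{c},\bdr{c})$ of $c$ from the parity and tautness constraints already proved, and only then to enumerate the finitely many resulting local pictures. For the first part, I would argue as follows: the hypothesis $\chi_+(c)>0$ is exactly $\bbb{c}=\bbl{c}+\bdr{c}\le 3$. By Lemma \ref{lem: properties of curves}(2), $\bdr{c}$ is even, so $\bdr{c}\in\{0,2\}$. If $\bdr{c}=0$, then $\bbl{c}$ is even by Lemma \ref{lem: properties of curves}(4) and nonzero since $\calC_{0,0}=\emptyset$ (Remark \ref{rem: Not all C}); hence $\bbl{c}=2$ and $c\in\calC_{2,0}$. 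If $\bdr{c}=2$, then $\bbl{c}>0$ by Lemma \ref{lem: properties of curves}(3), so $\bbl{c}=1$ and $c\in\calC_{1,2}$. (In particular $\chi_+(c)\in\{\tfrac12,\tfrac14\}$.) This step is just bookkeeping with Remark \ref{rem: Not all C}.

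The enumeration rests on the fact that such a $c$ is very local. The curve $c$ has exactly $\bbb{c}$ bones, alternating with its $\bbb{c}$ joints around $c$. Each \I-joint sits inside a single bubble; each \I-bone is carried by $\iota$ into the closure of a single complementary region of $P\ssm D(L)$, its interior being disjoint from $D(L)$ and from all bubbles; and the \D-bone, present only when $c\in\calC_{1,2}$, is carried into a single edge of $D(L)$. Combining this with Remark \ref{rem: joints and bones} — a \I-joint joins two \I-bones, a \D-joint joins a \D-bone to an \I-bone — the curve $c$ is supported in a neighborhood of at most two bubbles and two complementary regions when $c\in\calC_{2,0}$, and of one bubble, two complementary regions and one edge of $D(L)$ when $c\in\calC_{1,2}$.

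Then I would run the case analysis. If $c=\beta_1\cup\alpha_1\cup\beta_2\cup\alpha_2\in\calC_{2,0}$ cyclically, with \I-joints $\alpha_i\subset B_i$ and \I-bones $\beta_i\subset R_i$, then each $R_i$ is incident to both $B_1$ and $B_2$. If $B_1=B_2$, then $c$ meets that bubble twice, hence in two opposite \I-joints by Lemma \ref{lem: properties of curves}(5), which pins down the picture of a curve encircling a single crossing; if $B_1\ne B_2$, the two distinct crossings share two incident complementary regions, and consulting the normal-position pictures of Figure \ref{fig: three types of intersection} — which restrict how $c$ may traverse a twist box — one reads off the finitely many remaining possibilities up to isotopy. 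If $c=d\cup\beta_1\cup\alpha\cup\beta_2\in\calC_{1,2}$ cyclically, with \D-bone $d$ in an edge of $D(L)$, \I-bones $\beta_1,\beta_2$, and \I-joint $\alpha\subset B$, then by Lemma \ref{lem: properties of curves}(6) the bones $\beta_1,\beta_2$ lie in different complementary regions, both incident to $B$ and meeting the edge of $d$; running through the positions of $B$ relative to that edge, and using Lemma \ref{lem: properties of curves}(5) to prevent $c$ from meeting $B$ twice, produces the a priori list of $\calC_{1,2}$ curves, of which Lemma \ref{lem: properties of curves}(7) discards the one depicted there. The surviving curves are precisely the six of Figure \ref{fig: positive curves}.

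The main obstacle I expect is the geometric enumeration in the previous paragraph: one must check it is exhaustive — that no configuration outside the list can occur, which is where tautness (Lemma \ref{lem: properties of curves}) and the hypotheses on $D(L)$ are used to rule out the degenerate straddlings of bubbles — and irredundant, i.e., that each of the six pictures is a single isotopy class of simple closed curve on $P^\pm$. The latter requires keeping careful track of the over/under data at the relevant bubble and of which side of each strand each bone runs along. By contrast, the numerical reduction of the first paragraph is immediate.
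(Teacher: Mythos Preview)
Your numerical reduction to $\calC_{2,0}\cup\calC_{1,2}$ is correct and matches the paper exactly. The enumeration, however, has a genuine gap.

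The concrete error is the claim that ``the \D-bone \ldots\ is carried into a single edge of $D(L)$''. This is false. A \D-bone is an arc of $c$ lying on $\iota^{-1}(L\cap P^+)$, and a connected component of $L\cap P^+$ is not an edge of the $4$-regular graph $D(L)$: it consists of segments of $L$ in $P$ together with the over-strands on the upper hemispheres $B_i^+$, so it may pass over several crossings. In the paper's figures (iii)--(vi) the \D-bone passes over one or two crossings, lying in one or two distinct twist regions, and it is precisely this that distinguishes the four $\calC_{1,2}$ types. Your case analysis ``positions of $B$ relative to that edge'' is therefore based on a false premise and cannot see the twist-region structure at all. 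The paper's route is quite different: it tracks the checkerboard colour of the two endpoints of a slight extension $\alpha$ of the \D-bone. Because the complementary limb $c\ssm\alpha$ is a single turn (one colour change), the endpoints of $\alpha$ lie in regions of opposite colour; Lemma~\ref{lem: properties of curves}(7) then forces $\alpha$ to pass over at least one crossing, and the colour constraint forces $\alpha$ to enter a twist region through a length edge and leave it on $L$, which yields the four pictures.

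There is a secondary gap in your $\calC_{2,0}$ analysis. You never invoke primeness of $D(L)$, whereas the paper uses it to conclude that the two \I-joints are turns at two \emph{different} twist regions. Your case $B_1=B_2$, which you say ``pins down the picture of a curve encircling a single crossing'', is not one of the six types and must be excluded, not listed; and when $B_1\neq B_2$ lie in the \emph{same} twist region (e.g.\ two turns at the two extremal bubbles of one region), your outline gives no mechanism to rule this out. Primeness is exactly what does this: a simple closed curve in $P$ obtained by pushing such a $c$ off the bubbles meets $D(L)$ transversely in two points with crossings on both sides.
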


\smallskip

Note that the curves $c$ in cases $(i)$ and  $(ii)$ are in $\calC_{2,0}$. 
The curves in cases $(iii), (iv), (v)$ and $(vi)$ belong to $\calC_{1,2}$

\begin{proof}
It follows from Definition \ref{def: contribution}, Lemma \ref{lem: properties of curves}
and Remark \ref{rem: Not all C} that if $\chi_+(c)>0$ then $c\in\calC_{2,0}$ 
or $c\in\calC_{1,2}$.  Since the diagram is prime, a curve $c\in\calC_{2,0}$  must 
contain two turns at different twist boxes. Hence $c$ is as depicted in figures (i) or (ii).

Let $c\in\calC_{1,2}$ and  let $\alpha$ denote a  limb which is a small extension 
of the unique \D-bone in $c$.  The endpoints of $\alpha$ must be in regions of 
$P\ssm D(L)$ of different color since the complementary limb of $\alpha$ in $c$ 
is a turn. Since $S$ is taut, Lemma \ref{lem: properties of curves}(7) implies 
that $\alpha$ must pass over at least one crossing of $D(L)$. Since the endpoints of 
$\alpha$ are in regions of different colors, $\alpha$ cannot connect the two regions 
adjacent to the two length edges of a twist region. Thus, $\alpha$ enters a twist 
region $T$ through its length edge, and exists on $L$. It can meet one or two twist 
regions. If $\alpha$ meets one twist region, then it must be as in figures (iii) or (iv). 
Otherwise, up to isotopy, it must be as in figures (v) or (vi).
\end{proof}

\begin{definition}
Denote by $\calC_{\pos}$ (resp. $\calC_{\le 0}$) the set of curves $c\in\calC$ 
such that $\chi_+(c)>0$ (resp. $\le 0 $). The lemma above shows that 
$\calC_{\pos} = \calC_{2,0} \cup \calC_{1,2}$. The \emph{type} of a curve in 
$\calC_{\pos}$ corresponds to the types of curves as depicted in 
Figure \ref{fig: positive curves}. For example, a curve of $\calC_{2,0}$ 
is of type (i) or (ii). 
\end{definition}

Next, we will describe a distinguished set, denoted by  $\calK$, of limbs 
of curves in $\calC$ to which we will ``reallocte'' some of the positive 
contribution of curves in $\calC_{\pos}$. We begin with a definition:

\begin{definition}
 An \emph{extremal} bubble is a first or last bubble of a twist region. A curve wiggles 
 \emph{extremally} if it wiggles through an extremal bubble. Assume that a curve or an 
 arc $\beta$ wiggles through a twist region extremally,  then the \I-bone $\alpha\subset \beta$ 
 which leaves the twist region from the \underline{extremal} bubble of the wiggle
 is called a \emph{core} of $\beta$. 
\end{definition}

\begin{definition}\label{def: curve types}
A \emph{vertebra} is an \I-bone $\tau$ in a curve $c$ connecting two turns of 
$c$ in two twist regions $T,T'$ so that $\tau$ meets the length edge of $T$ and 
the width edge of $T'$.

A \emph{rib} is a closed curve $c\in\calC_{4,0}$ which consists of exactly 
two turns and an extremal wiggle.
\end{definition}

\begin{remark}\label{rem: vertebrae of c20}
Note that the two \I-bones of a curve $c\in\calC_{2,0}$ of type (i) are vertebrae.
\end{remark}

\begin{lemma}\label{lem: onion}
Assume that $c_0$ is not a rib, and that $c_0$ contains a vertebra $\tau_0$.
Then, there exists a finite sequence of curves $c_0,c_1,\dots,c_n$, limbs
$\kappa_1,\dots,\kappa_n$ and bones $\tau_0,\dots,\tau_{n-1}$ such that:
\begin{enumerate}
    \item For $1\le i< n$, $c_i$ is a rib and $\tau_i$ is the \I-bone connecting its two turns.
    \vskip7pt
    \item For $1\le i\le n$, $\kappa_i$ is a limb of $c_i$ with a unique core $\tau_{i-1}$ 
        and $\bbb{\kappa_i}=3$. In particular, the curve $c_{i}$ abuts the curve $c_{i-1}$
        along the core $\tau_{i-1}$.
      \vskip7pt
    \item The curve $c_n$ has $\chi_+(c_n)<0$.
\end{enumerate}
    Moreover, given $\kappa_n$ one can uniquely determine the \I-bone $\tau_0$ and
    hence the curves $c_i$, arcs $\kappa_i$ and bones $\tau_i$  as above. 
\end{lemma}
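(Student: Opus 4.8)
The plan is to build the sequence $c_0, c_1, \dots$ greedily, following the ``onion-peeling'' picture suggested by the name of the lemma: at each stage we look at the curve $c_i$ and the vertebra/core $\tau_{i-1}$ currently in hand, and either we stop (because $\chi_+(c_i)<0$), or we are forced to produce the next rib $c_{i+1}$ abutting $c_i$ along a new core $\tau_i$. First I would set up the base case: the given vertebra $\tau_0$ in $c_0$ meets the length edge of some twist region $T$ and the width edge of another twist region $T'$; since $c_0$ turns at $T'$ and $\tau_0$ enters $T'$ along the \emph{width} edge, the turn of $c_0$ at $T'$ together with $\tau_0$ forms a limb that wiggles extremally through the first (or last) bubble of $T'$ — here is where I would invoke Lemma~\ref{lem: properties of curves} to rule out the configurations that would let the curve enter/leave $T'$ in a non-extremal way, and to see that $\tau_0$ is indeed a core of this wiggle. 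The abutting curve $\bar c$ along $\tau_0$ then has a limb $\kappa_1$ with $\tau_0$ as its unique core and $\bbb{\kappa_1}=3$; this is essentially a local ``across the saddle'' argument using the structure of saddles stacked in a bubble from Definition~\ref{def: joints}(10) and the fact that $\partial D(L)$-components are covered (Lemma~\ref{lem: normal form}).

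Next I would run the inductive step. Given $c_i$ ($i\ge1$) containing the limb $\kappa_i$ with unique core $\tau_{i-1}$ and $\bbb{\kappa_i}=3$, I claim $c_i$ is forced to be a rib \emph{unless} $\chi_+(c_i)<0$, in which case we terminate with $n=i$. The argument: a limb with $\bbb{}=3$ that contains an extremal wiggle (two of its three joints) must have its third joint be a turn — here Lemma~\ref{lem: properties of curves}(3),(5),(7) and the primeness of the diagram constrain what $c_i$ can do once it leaves the wiggle; either it closes up immediately into a $\calC_{4,0}$ curve consisting of two turns and one extremal wiggle, i.e.\ a rib, and then its other turn gives a new vertebra $\tau_i$ (meeting a length edge on one side and a width edge on the other, by the same analysis as the base case) along which a new abutting curve $c_{i+1}$ with limb $\kappa_{i+1}$, $\bbb{\kappa_{i+1}}=3$, unique core $\tau_i$ exists; or $c_i$ accumulates enough extra joints that $\bbb{c_i}\ge 6$, whence $\chi_+(c_i) = 1 - \tfrac14\bbb{c_i} < 0$. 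Showing these are the only two options, and in particular that a rib really does produce a genuine vertebra $\tau_i$ (not, say, a second core of the same wiggle), is the crux and where I expect to spend most of the effort.

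For termination, I would argue the sequence cannot go on forever: each $\tau_i$ is a distinct \I-bone, and the curves $c_i$ are distinct (consecutive ones abut, and abutting curves are unequal and lie on opposite sides $\calC^\pm$); since $\widehat S$ meets only finitely many bubbles (Lemma~\ref{lem: normal form} bounds this, and tautness via $\mathrm{Com}(S)$ keeps it finite), the process stops, and by construction it can only stop at a curve with $\chi_+<0$. Finally, for the ``moreover'' (uniqueness/reconstruction from $\kappa_n$): each step of the construction is deterministic in reverse — from $\kappa_i$ we recover its unique core $\tau_{i-1}$, hence (since $\tau_{i-1}$ lies in a unique rib-or-$c_0$ on the opposite side) the curve $c_{i-1}$, hence its other turn and thus $\kappa_{i-1}$, etc.; I would phrase this as: the map sending the data $(c_0,\tau_0)$-onion to its terminal limb $\kappa_n$ is injective because the core of a limb with $\bbb{}=3$ is unique (by the definition of core and Lemma~\ref{lem: properties of curves}(5), which forbids a second extremal exit), so walking backwards through $\tau_{n-1}, c_{n-1}, \tau_{n-2}, \dots$ is forced at every stage until we reach the curve that is not a rib, which must be $c_0$.

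The main obstacle, as noted, is the rigidity statement in the inductive step: pinning down that a limb with exactly three joints containing an extremal wiggle, living on a taut surface in a prime twist-reduced $3$-highly twisted diagram, must close up into a rib or else push $\bbb{c_i}$ to $6$ — this is a finite-but-fiddly case analysis over how the curve can route between twist regions, leaning hard on parts (3),(5),(6),(7) of Lemma~\ref{lem: properties of curves} and on the $3$-crossing hypothesis (which is what forces wiggles to be extremal and prevents short-circuiting inside a twist region).
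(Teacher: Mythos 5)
Your overall strategy matches the paper's: build the sequence greedily, at each stage either terminating because $\chi_+(c_i)<0$ or showing $c_i$ is a rib and passing to the curve abutting it along the \ibone joining its two turns; terminate by finiteness together with the fact that $c_0$ is not a rib; and recover the sequence from $\kappa_n$ by walking backwards through unique cores. However, there is a genuine gap in your inductive step. You assert that when $c_i$ is a rib, the \ibone $\tau_i$ connecting its two turns is automatically a vertebra, ``by the same analysis as the base case.'' This is false: that analysis only guarantees that $\tau_i$ meets the \emph{length} edge of the twist region containing the turn adjacent to the wiggle of $c_i$; at its other end, $\tau_i$ may meet either the width edge (a vertebra) or the length edge (not a vertebra) of the second twist region. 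The non-vertebra case must be treated separately: there the curve $c_{i+1}$ abutting $c_i$ along $\tau_i$ wiggles in \emph{both} twist regions, and one needs the twist-reduced hypothesis to rule out $\bbb{c_{i+1}}=4$ (such a curve would bound a twist-reduction subdiagram), which forces $\chi_+(c_{i+1})<0$ and terminates the process with a terminal limb of a different shape --- a full wiggle plus a single \ijoint of a second wiggle, rather than a wiggle plus a turn. This branch is not exotic (the paper's own example exhibits a rib whose connecting bone meets the length edge of both twist regions), and without it conditions (2) and (3) of the lemma are unverified in that case.

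A secondary, smaller issue: your termination argument claims all the $c_i$ are distinct, but your justification only shows that \emph{consecutive} curves are distinct (and alternate between $\calC^+$ and $\calC^-$); a non-consecutive repetition is not excluded by that. The cleaner route, which is what the paper does, is to observe that the process is deterministic backwards (each rib has a unique core), so periodicity would force $c_0$ into the cycle and hence force $c_0$ to be a rib, contradicting the hypothesis. You should either adopt that argument or otherwise justify that the $\tau_i$ cannot repeat.
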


\begin{definition}We will refer to the curves $c_i$ (resp. limbs $\kappa_i$) in the 
lemma as the \emph{layers curves} (resp. \emph{layer limbs}) of $\tau_0$ , and to 
the curve $c_n$ (resp. arc $\kappa_n$) as the \emph{terminal layer curve} 
(resp. \emph{terminal layer limb}) of $\tau_0$.
\end{definition}

\begin{proof}[Proof of Lemma \ref{lem: onion}]
We produce a sequence of curves, limbs and bones, satisfying the assumptions above, 
which terminates at the first curve $c_n$ such that $\chi_+(c_n)<0$.

Let $c_0$ and $\tau_0$ be as in the statement of the lemma. Let $c_1$ be the curve 
abutting $c_0$ along $\tau_0$. The bone $\tau_0$ connects an extremal wiggle and a 
turn of $c_1$, hence it is a core of $c_1$. Let $\kappa_1$ be the limb of $c_1$ 
containing $\tau_0$ and the adjacent wiggle and turn. If $\chi_+(c_1)<0$, then stop 
the process. Otherwise, $\bbb{c_1}=4$. Hence, the curve $c_1$ is a rib, i.e., it consists 
of a wiggle and two turns and has a unique core. Let $\alpha_1,\alpha_1'$ be the 
\I-joint of the two turns of $c_1$, and assume that $\alpha_1\subset \kappa_1$. 
Let $\tau_1$ be the \I-bone of $c_1$ connecting $\alpha_1,\alpha_1'$. The bone $\tau_1$ 
meets the length edge of the twist region containing $\alpha_1$.

Assume first that $\tau_1$ is not a vertebra of $c_1$, i.e., it meets the length 
edge of the twist region containing $\alpha_1'$. Then, the 
curve $c_2$ abutting $c_1$ along $\tau_1$ contains two wiggles in two different 
twist regions. It follows that $\chi_+(c_2)<0$ as otherwise $c_2$ bounds a twist 
reduction subdiagram. Let $\kappa_2$ be the limb in $c_2$, abutting $\tau_1$, 
consisting of a wiggle through the bubble of $\alpha_1$, and one more \I-joint 
at the bubble containing $\alpha_1'$. The bone $\tau_1$ is the unique core of 
the limb $\kappa_2$, and the process stops ($n=2$).

If $\tau_1$ is a vertebra, then we iterate the process. That is, we consider 
the curve $c_2$ abutting $c_1$ along $\tau_1$, and the limb $\kappa_2$ of $c_2$ 
containing $\tau_1$ and its adjacent wiggle and turn.

Since there are finitely many curves and limbs, the process either terminates 
or is periodic. It cannot be periodic because the initial curve $c_0$ is not a rib, 
but note that all the curves $c_i$ for $i<n$ are ribs.

Finally, given $\kappa_n$, the curve $c_{n-1}$ is the curve abutting 
$c_n$ along the unique core of $\kappa_n$. The curve $c_{n-1}$ has a unique 
core, which is also the core of a unique arc $\kappa_{n-1}$ (with 
$\bbb{\kappa_{n-1}}=3$). Repeating this process, we can retrace the 
sequence all the way to $\tau_0$.
\end{proof}

\begin{example}
In the figure below, we see two examples of outputs of the process 
in Lemma \ref{lem: onion}. Starting with the curve $c_0$ which is 
not a rib, and the vertebra $\tau_0$ of $c_0$, we get the curves
$c_0,c_1,c_2,c_3$, limbs $\kappa_1,\kappa_2,\kappa_3$ and \I-bones 
$\tau_0,\tau_1,\tau_2$. The limbs $\kappa_i$ are shown in bold the 
figure.  The \I-bones $\tau_0,\tau_1,\tau_2$ are the cores of 
$\kappa_1,\kappa_2,\kappa_3$ respectively.

\begin{figure}[H]
    \includegraphics{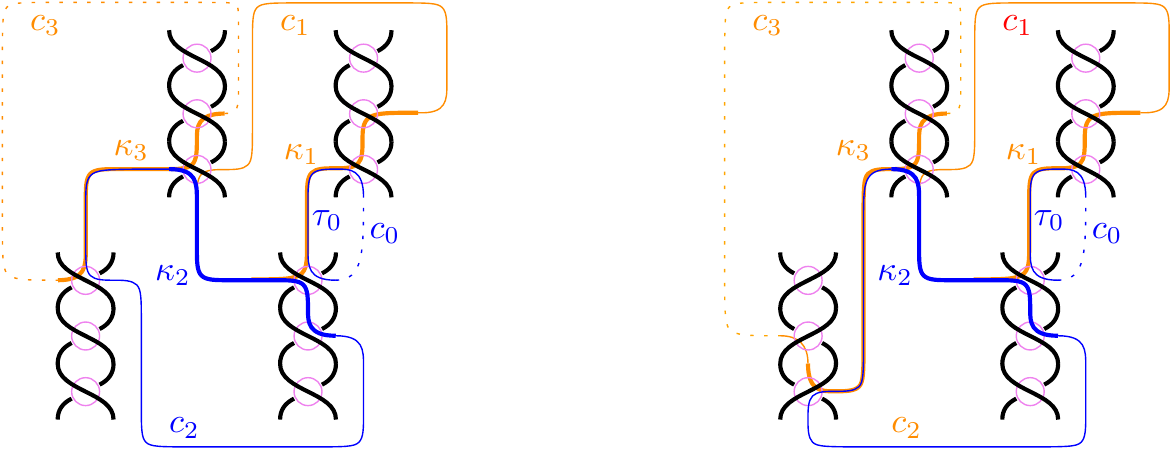}
\end{figure}

In the left figure, note that the \I-bone $\tau_2$ is a vertebra. If $\chi_+(c_3) < 0$ 
then the process stops at $c_3$ and $\kappa_3$ is its terminal limb. Otherwise, $c_3$ 
is again a rib, and the process continues.

In the right figure, the \I-bone connecting the two turns of $c_2$ is not a vertebra, 
as it meets the length edge of both twist regions. Therefore the limb $\kappa_3$, 
which is shown in bold in the figure, contains a wiggle (through the bubble in which 
both $c_1$ and $c_2$ turn) and ``half a wiggle'' (through the other bubble in which 
$c_2$ turns). As the proof shows, the curve $c_3$ necessarily satisfies $\chi_+(c_3)<0$, 
and thus  it is the terminal curve of the process.
\end{example}

\vskip3pt

\begin{definition}\label{def:Definition of K}  Definition of $\calK$. \hfill

\begin{enumerate}
    \item Each $c\in\calC_{2,0}$ of type (i) has two \I-bones. Each \I-bone $\tau$ 
    of $c$ is a vertebra and hence by Lemma \ref{lem: onion} determines a terminal 
    layer limb $\kappa_n$. Let $\calK_{3,0}$ be the set of all terminal layer limbs 
    associated with all \I-bones of curves in $c\in \calC_{2,0}$.

\vskip5pt

\item Each $c\in\calC_{2,0}$ of type (ii) determines an arc $\kappa$ which abuts $c$ 
and wiggles through the two twist regions. Let $\calK_{4,0}$ be the collection of all arcs 
$\kappa$ obtained in this way.

\vskip5pt

\item Each $c\in \calC_{2,1}$, determines an arc $\kappa$ which abuts $c$, wiggles through 
the twist region in which $c$ turns, and contains one of the \D-joints of $c$. Let $\calK_{2,1}$ 
be the collection of all arcs $\kappa$ obtained in this way.
\end{enumerate}

Finally, let $\calK$ be the set $ \calK_{3,0}\cup \calK_{4,0}\cup \calK_{2,1} $.
\end{definition}

\begin{remark}\label{rem: core retracing}
We note that given $\kappa \in \calK$ one can uniquely determine the curve
$c\in\calC_{\pos}$ that determines it. Conversely, to each curve 
in $\calC_{2,0}$ of type (i) there are two curves of $\calK_{3,0}$ corresponding 
to the  two choices of \I-bones of $c$. To each 
curve in $\calC_{1,2}$ and each curve in $\calC_{2,0}$ of type (ii) there is a 
unique arc in $\calK_{2,1}$ and $\calK_{4,0}$ respectively.  In particular, 
$\tfrac12 |\calK_{3,0}|  + |\calK_{4,0}| = |\calC_{2,0}|$ and $|\calK_{2,1}| =
|\calC_{1,2}|$.
\end{remark}

\begin{lemma}\label{lem: K are disjoint}
The arcs in $\calK$ which belong to curves in $\calC^+$ (resp. $\calC^-$) are 
pairwise disjoint. 
\end{lemma}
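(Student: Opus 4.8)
The plan is to argue by contradiction: suppose two arcs $\kappa,\kappa'\in\calK$, both belonging to curves in $\calC^+$ (the $\calC^-$ case being symmetric), are not disjoint. Since all arcs of $\calC^+$ are disjoint outside $L$ and the joints and bones are well-behaved, a non-trivial intersection of $\kappa$ and $\kappa'$ must occur along a shared \I-bone or through a shared bubble; in fact, by the structure of the layer limbs produced in Lemma \ref{lem: onion} (each $\kappa_i$ consists of a wiggle/turn together with its core), the only way for two such arcs to overlap is that they share a core \I-bone $\tau$, or that they share an \I-joint in a common extremal bubble. I would first reduce to these two local pictures.

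The key mechanism is the \emph{retracing} uniqueness already established: by the ``moreover'' clause of Lemma \ref{lem: onion} and Remark \ref{rem: core retracing}, a terminal layer limb $\kappa_n$ determines its entire layer sequence $c_0,c_1,\dots,c_n$, the bones $\tau_0,\dots,\tau_{n-1}$, and hence the originating curve $c\in\calC_{\pos}$ uniquely; the same retracing applies to the arcs in $\calK_{4,0}$ and $\calK_{2,1}$, which sit at the ``end'' of a (possibly empty) layer sequence. So if $\kappa$ and $\kappa'$ shared a core bone $\tau_{i-1}$, then running the retracing from that shared core would force the two layer sequences to coincide from that point backward, and therefore $\kappa=\kappa'$, a contradiction. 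Thus I would show: (a) if $\kappa,\kappa'$ overlap along an \I-bone, that \I-bone is forced to be the core of a layer limb in each, so the retracing argument gives $\kappa=\kappa'$; (b) if $\kappa,\kappa'$ share an \I-joint in an extremal bubble $B$, then by Lemma \ref{lem: properties of curves}(5) the curve they lie on meets $B$ in (at most) two opposite \I-joints belonging to a single saddle, which pins down how a layer limb can pass through $B$, and again the wiggle/turn structure plus retracing forces $\kappa=\kappa'$.

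The step I expect to be the main obstacle is (b): carefully enumerating the ways a wiggle, a turn, or a ``half-wiggle'' of a layer limb can pass through an extremal bubble, and checking that two distinct arcs of $\calK$ cannot thread the same saddle in a compatible way without their cores coinciding. This requires using Lemma \ref{lem: properties of curves}(5)–(7) and the checkerboard/twist-reduced hypotheses to rule out the ``crossing'' configurations, and then invoking the uniqueness of retracing. A secondary subtlety is bookkeeping across the three families $\calK_{3,0},\calK_{4,0},\calK_{2,1}$: I would handle the mixed cases (one arc from each family sharing a bone) by noting that in every family the relevant \I-bone that could be shared plays the role of a core, so the same retracing argument applies uniformly, with the $\calK_{2,1}$ case additionally using Lemma \ref{lem: properties of curves}(6) to control the \D-joint end.
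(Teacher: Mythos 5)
Your plan follows essentially the same route as the paper: the paper's proof also reduces everything to the retracing uniqueness (each arc of $\calK$ is determined by its core, via Remark \ref{rem: core retracing} and Lemma \ref{lem: onion}), and then checks by a short case analysis on the shared joint (\D-joint, \I-joint in a wiggle, \I-joint in a turn) that any overlap forces $\kappa$ and $\kappa'$ to have the same core. The only cosmetic difference is that the paper organizes the cases by the type of the shared joint rather than by shared bone versus shared bubble, and the case you flag as the main obstacle is dispatched in a couple of lines using the $3$-highly-twisted hypothesis.
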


\begin{proof}
Assume that the two arcs $\kappa,\kappa' \in \calK$ meet. Since, by Remark 
\ref{rem: core retracing}, every arc in $\calK$ is determined by its core, 
it suffices to show that $\kappa$ and $\kappa'$ have the same core.

By assumption $\kappa,\kappa'$ share a joint, and hence are limbs of the same curve $c$. 
If this joint is a \D-joint then $\kappa,\kappa'\in\calK_{2,1}$ and their core is the 
unique \I-bone incident to their shared \D-joint. If the joint is an \I-joint that is 
part of a wiggle of $c$, then, since the diagram is 3-highly twisted, there is a 
unique core emanating from the extremal bubble of this wiggle, which is shared by 
both $\kappa$ and $\kappa'$. Finally, if the joint is an \I-joint that is part of a 
turn of $c$, then $\kappa,\kappa'\in\calK_{3,0}$. The cores of $\kappa,\kappa'$ must 
be the unique \I-bone which emanates from the width edge of the twist 
region in which the turn occurs because it is also the vertebra of the previous layer curve.
\end{proof}

\begin{lemma}\label{lem: gap of c containing kappa}
For all $\kappa\in \calK$ we have $\bbb{c} \ge \bbb{\kappa} + 2$ where $c$ is the 
unique curve in $\calC$ containing $\kappa$.
\end{lemma}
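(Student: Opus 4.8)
The plan is to prove the inequality separately for the three families $\calK_{3,0}$, $\calK_{4,0}$, $\calK_{2,1}$ whose union is $\calK$ (Definition \ref{def:Definition of K}), using that by construction $\bbb{\kappa}$ equals $3$, $4$ and $3$ respectively. The basic bookkeeping is the same throughout: by Lemma \ref{lem: properties of curves}(2) and Remark \ref{rem: Not all C}, $\bdr{c}$ is always even and $\calC_{2k+1,0}=\emptyset$; and since $\kappa$ is a sub-limb of $c$, every \ijoint and every \djoint of $\kappa$ is a genuine joint of $c$, so $\bbl{c}\ge\bbl{\kappa}$ and $\bdr{c}\ge\bdr{\kappa}$.

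For $\kappa\in\calK_{3,0}$ there is nothing extra to do: by Definition \ref{def:Definition of K}(1) the curve $c$ containing $\kappa$ is the terminal layer curve $c_n$ produced by Lemma \ref{lem: onion} from a vertebra of a type-$(i)$ curve of $\calC_{2,0}$, so $\chi_+(c)<0$, i.e. $\bbb{c}>4$, whence $\bbb{c}\ge 5=\bbb{\kappa}+2$. For $\kappa\in\calK_{4,0}$ and $\kappa\in\calK_{2,1}$ the strategy is to use the strict inclusion $\kappa\subsetneq c$ together with parity to reduce to a single borderline configuration, and then rule it out. If $\kappa\in\calK_{4,0}$ then $\kappa$ contains two wiggles in two \emph{distinct} twist regions (distinctness coming from the type-$(ii)$ description in Lemma \ref{lem: possible positive curves}), so $\bbl{c}\ge 4$; if $\bdr{c}>0$ then $\bdr{c}\ge 2$ and $\bbb{c}\ge 6=\bbb{\kappa}+2$, while if $\bdr{c}=0$ then $\bbl{c}$ is even by Lemma \ref{lem: properties of curves}(4), so it suffices to exclude $\bbl{c}=4$, i.e. the case that $c$ is a closed curve consisting of exactly those two wiggles joined by two \ibones. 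If $\kappa\in\calK_{2,1}$ then $\kappa$ contains a wiggle and a \djoint, so $\bbl{c}\ge 2$ and (being even and positive) $\bdr{c}\ge 2$, so it suffices to exclude $\bbl{c}=2$, i.e. $c\in\calC_{2,2}$ consisting of a wiggle and a \dbone joined by two \ibones.

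Both borderline configurations I would dispose of by arguments of the kind already used in Lemma \ref{lem: properties of curves} and Lemma \ref{lem: onion}. In each case $c$ abuts the positive curve $c_0\in\calC_{\pos}$ that determines $\kappa$ (a type-$(ii)$ curve of $\calC_{2,0}$, resp. a curve of $\calC_{1,2}$), and one examines the curve $\bar c$ abutting $c$ along one of its \ibones: one shows $\bar c$ would have to meet some bubble twice — either in two \ijoints on the same side of $L$, or in a \dbone and an \ijoint — contradicting Lemma \ref{lem: properties of curves}(5). Alternatively, in the $\calK_{4,0}$ case the simple closed curve underlying the projection of $c$ meets $D(L)$ in exactly four points, two near crossings of one twist region and two near crossings of the other; not being a chain of bigons describing an integer $2$-tangle, it then encloses a twist-reduction subdiagram, contradicting that $D(L)$ is twist-reduced. (In the $\calK_{2,1}$ case one may instead run the checkerboard-coloring argument of Lemma \ref{lem: properties of curves}(4) for a curve carrying a single \dbone to conclude $\bbl{c}$ must be odd, so $\calC_{2,2}=\emptyset$.) Either way $\bbb{c}\ge 6$ when $\kappa\in\calK_{4,0}$ and $\bbb{c}\ge 5$ when $\kappa\in\calK_{2,1}$, which is what is claimed.

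I expect the main obstacle to be precisely these two exclusions: one must pin down the exact combinatorics of the short curve $c$ — in particular how its \ibones meet the length and width edges of the two twist regions it visits — before it can be matched to a forbidden pattern from Lemma \ref{lem: properties of curves}(5)--(7) or to a twist-reduction subdiagram. The $\calK_{4,0}$ case is the more delicate one, since the hypothetical short $c$ there has $\chi_+(c)=0$ rather than $\chi_+(c)<0$, so parity and the earlier lemmas by themselves do not finish it and the twist-reducedness hypothesis must genuinely be invoked.
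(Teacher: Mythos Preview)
Your treatment of $\calK_{3,0}$ and the twist-reduction argument for $\calK_{4,0}$ are correct and match the paper. The genuine gap is the $\calK_{2,1}$ case. Your parenthetical checkerboard ``alternative'' is simply false: a \D-bone does not always flip the checkerboard colour. Whether the two \I-bones incident to a \D-bone lie in regions of the same or different colour depends on how many crossings of $D(L)$ the \D-bone passes over, and both parities occur. In fact the paper later analyses curves in $\calC_{2,2}$ explicitly (see the lemma on $\calC_{2,2}$ in \S\ref{sec: Atoroidal and unannular}, where it is observed that for $c\in\calC_{2,2}$ the endpoints of a small continuation of $c\cap L$ lie in regions of the \emph{same} colour). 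So one cannot conclude $\calC_{2,2}=\emptyset$.

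Your primary ``look at the abutting curve $\bar c$ and find a forbidden double-hit on a bubble'' approach is also not enough on its own here. The paper proceeds instead by enumerating the finitely many shapes a putative $c\in\calC_{2,2}$ containing $\kappa\in\calK_{2,1}$ can have, keyed to the type (iii)--(vi) of the curve $c_0\in\calC_{1,2}$ that $\kappa$ abuts (two sub-cases for (iii), one each for (iv)--(vi)). For the type~(iii) configurations one does get a tautness violation, in the spirit of your suggestion; but for types (iv)--(vi) the mechanism is different: the curve $c$ itself bounds a twist-reduction subdiagram, and it is the twist-reduced hypothesis (not Lemma~\ref{lem: properties of curves}(5)) that finishes those cases. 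So the $\calK_{2,1}$ exclusion genuinely requires a short case analysis combining both tools, and your proposal does not yet supply it.
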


\begin{proof}
Let $\kappa\in \calK$ and let $c\in\calC$ be the curve containing $\kappa$. Assume 
for contradiction that $\bbb{c}< \bbb{\kappa} + 2$.

If $\kappa \in \calK_{4,0}$ then $\bbb{c} < \bbb{\kappa}+2$ implies that 
$c\in \calC_{4,0}$. If this is the case, then since $c$ wiggles through two twist 
regions, the projection of $c$ to $P$  gives a twist-reduction subdiagram in 
contradiction to the assumption on the diagram.

If $\kappa \in \calK_{2,1}$, then $\bbb{c}<\bbb{\kappa}+2$ implies that 
$c\in \calC_{2,2}$. 
If this is the case, then $\kappa$ abuts some curve $c_0\in\calC_{1,2}$ which could 
be one of the figures (iii)-(vi) in Lemma \ref{lem: possible positive curves}.
For case (iii), there are two possible configurations for the closed curve 
$c\in\calC_{2,2}$ containing $\kappa$, while for each of the cases (iv)-(vi) 
there is only one possible such curve. Thus, all possible cases for the curve 
$c$ are shown in the figure below.

\begin{figure}[H]
    \includegraphics[width = 0.9\textwidth]{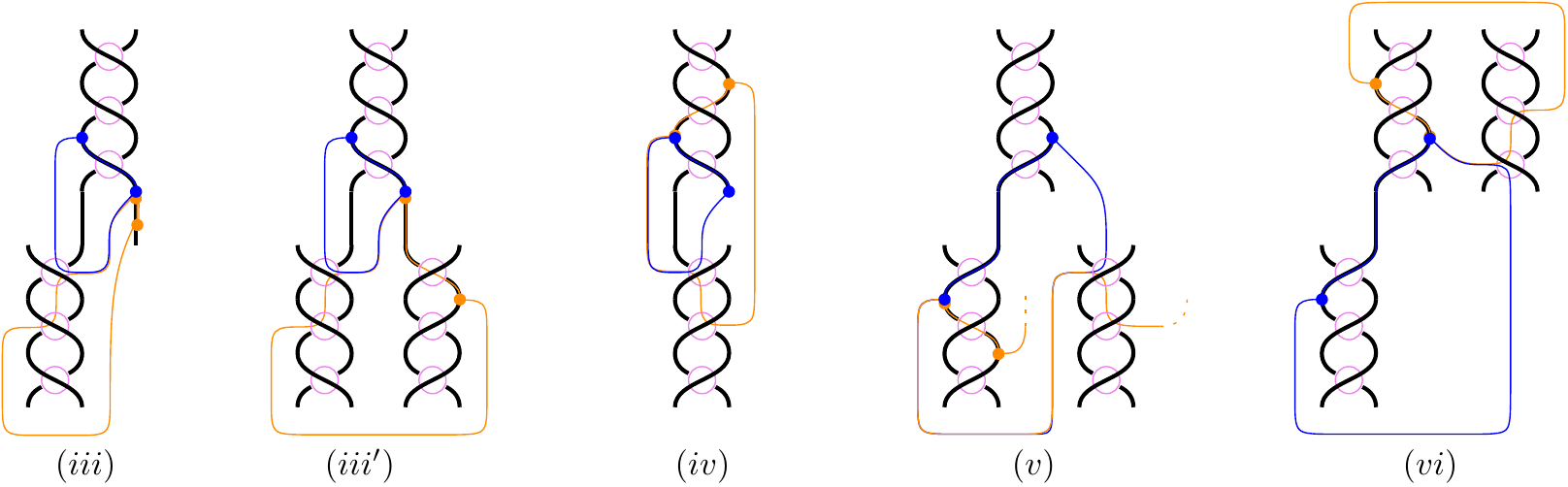}
\end{figure}

In case (iii) the surface is not taut and in all other cases, the curve $c$ bounds a 
twist-reduction subdiagram. 

If $\kappa \in \calK_{3,0}$, then $\bbb{c}<\bbb{\kappa}+2$ implies that $c\in\calC_{4,0}$, 
however this is in contradiction to the definition of $\calK_{3,0}$ given by 
Lemma~\ref{lem: onion}.
\end{proof}

\begin{remark}\label{rem: c and d are negative}
It follows from the proof of Lemma \ref{lem: onion} that if $c$ 
contains a limb $\kappa\in\calK_{3,0}$ such that the core of $\tau$ is not 
a vertebra of its abutting curve then $\bbb{c} \ge \bbb{k}+3$: Indeed, if $c=c_n$, 
$\kappa = \kappa_n$ and $\kappa$ abuts $c_{n-1}$ along a bone which is not a 
vertebra of $c_{n-1}$ then $c$ must have an additional \I-joint which is not 
contained in $\kappa$.
\end{remark}

\begin{definition}\label{def: distributing}
Let $c\in \calC$. If $c\in\calC_{\pos}$ define $\chi'(c)=0$. 
Otherwise, let $n_{3,0}$ (resp. $n_{4,0}$; $n_{2,1}$) be the number 
of limbs $\kappa \in \calK_{3,0}$ (resp. $\calK_{4,0}$; $ \calK_{2,1}$) in  $c$. 
We associate to $c$ the following quantity
\[
    \chi'(c) = \chi_+(c)+ \tfrac{1}{4}n_{3,0} + \tfrac{1}{2}n_{4,0} + \tfrac{1}{4} n_{2,1}.
 \]
\end{definition}

The next lemma shows that $\chi'$ is a redistribution of the Euler characteristic of $S$ 
among curves in $\calC_{\le 0}$. 
    
\begin{lemma}\label{lem: summing chi' gives Euler char}
$\chi(S) = \sum_{c\in \calC} \chi'(c).$
\end{lemma}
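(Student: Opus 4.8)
The plan is a straightforward accounting argument on top of Lemma~\ref{lem: Euler characteristic from Euler contributions}. That lemma already gives $\chi(S)=\sum_{c\in\calC}\chi_+(c)$, so it suffices to show that the passage from $\chi_+$ to $\chi'$ is \emph{conservative}: the total of $\chi_+$ that Definition~\ref{def: distributing} erases on curves of $\calC_{\pos}$ must be exactly balanced by the amounts $\tfrac14 n_{3,0}+\tfrac12 n_{4,0}+\tfrac14 n_{2,1}$ added back to curves of $\calC_{\le 0}$. So the whole proof reduces to checking one numerical identity.

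The first step is to verify that every arc $\kappa\in\calK$ genuinely lies on a curve of $\calC_{\le 0}$, so that the formula for $\chi'$ in Definition~\ref{def: distributing} really does absorb all of $\calK$. This is immediate from Lemma~\ref{lem: gap of c containing kappa}: each $\kappa\in\calK$ has $\bbb{\kappa}\ge 3$, so the unique curve $c$ containing $\kappa$ satisfies $\bbb{c}\ge\bbb{\kappa}+2\ge 5>4$, whence $\chi_+(c)=1-\tfrac14\bbb{c}<0$. Consequently the curves in $\calC_{\pos}$ contribute $0$ to $\sum_{c\in\calC}\chi'(c)$, and since each arc of $\calK_{3,0}$ (resp. $\calK_{4,0}$, $\calK_{2,1}$) is a limb of exactly one curve and that curve lies in $\calC_{\le 0}$, we get $\sum_{c\in\calC_{\le 0}}n_{3,0}(c)=|\calK_{3,0}|$ and similarly for the other two. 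Summing Definition~\ref{def: distributing} over all $c$ then yields
\[
\sum_{c\in\calC}\chi'(c)=\sum_{c\in\calC_{\le 0}}\chi_+(c)+\tfrac14\,|\calK_{3,0}|+\tfrac12\,|\calK_{4,0}|+\tfrac14\,|\calK_{2,1}|.
\]

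The second step is to identify the $|\calK_\bullet|$ terms with the count of positive curves. By Remark~\ref{rem: core retracing}, $\tfrac12|\calK_{3,0}|+|\calK_{4,0}|=|\calC_{2,0}|$ and $|\calK_{2,1}|=|\calC_{1,2}|$, hence $\tfrac14|\calK_{3,0}|+\tfrac12|\calK_{4,0}|+\tfrac14|\calK_{2,1}|=\tfrac12|\calC_{2,0}|+\tfrac14|\calC_{1,2}|$. On the other hand, by Lemma~\ref{lem: possible positive curves} we have $\calC_{\pos}=\calC_{2,0}\cup\calC_{1,2}$, where a curve in $\calC_{2,0}$ has $\bbb{c}=2$ so $\chi_+(c)=\tfrac12$, and a curve in $\calC_{1,2}$ has $\bbb{c}=3$ so $\chi_+(c)=\tfrac14$; therefore $\tfrac12|\calC_{2,0}|+\tfrac14|\calC_{1,2}|=\sum_{c\in\calC_{\pos}}\chi_+(c)$. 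Substituting this into the displayed identity gives
\[
\sum_{c\in\calC}\chi'(c)=\sum_{c\in\calC_{\le 0}}\chi_+(c)+\sum_{c\in\calC_{\pos}}\chi_+(c)=\sum_{c\in\calC}\chi_+(c)=\chi(S),
\]
the last equality by Lemma~\ref{lem: Euler characteristic from Euler contributions}.

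The arithmetic is routine; the one point that requires care is the multiplicity bookkeeping behind the identities $\sum_c n_{3,0}(c)=|\calK_{3,0}|$ etc. One must know that $\calK_{3,0},\calK_{4,0},\calK_{2,1}$ consist of distinct limbs, each lying on a single curve, each recoverable from its core — this is exactly the content of the ``moreover'' clause of Lemma~\ref{lem: onion} together with Remark~\ref{rem: core retracing}. Without that, a positive contribution could be lost or double-counted during the transfer from $\calC_{\pos}$ to $\calC_{\le 0}$. Once these cross-references are invoked, the proof is the two displays above.
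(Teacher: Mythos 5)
Your proof is correct and follows essentially the same route as the paper's: reduce to the conservation identity $\sum_{c\in\calC_{\pos}}\chi_+(c)=\tfrac14|\calK_{3,0}|+\tfrac12|\calK_{4,0}|+\tfrac14|\calK_{2,1}|$ and close it with Remark~\ref{rem: core retracing}. Your explicit check (via Lemma~\ref{lem: gap of c containing kappa}) that every arc of $\calK$ lies on a curve of $\calC_{\le 0}$ is a point the paper leaves implicit, and is a welcome addition rather than a deviation.
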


\begin{proof}
By Lemma \ref{lem: Euler characteristic from Euler contributions}, 
$\chi(S) = \sum_{c\in \calC} \chi_+ (c)$. Since $\chi'(c)=0$ for $c\in \calC_{\pos}$, 
$$\sum_{c\in\calC} \chi'(c) = \sum_{c\in \calC_{\np}} \chi'(c).$$
It thus remains to prove that 
$$\sum_{c\in \calC} \chi_+ (c) = \sum_{c\in \calC_{\np}} \chi'(c).$$
Subtracting $\sum_{c\in\calC_{\np}}\chi_+(c)$ from both sides and recalling that 
$\calC_{\np} = \calC \ssm \calC_{\pos}$, we have to show that
$$\sum_{c\in\calC_{\pos}} \chi_+(c) = \sum_{c'\in \calC_{\np}} (\chi'(c')-\chi_+(c')).$$
The left hand side is simply $\tfrac{1}{2}|\calC_{2,0}| + \tfrac{1}{4}|\calC_{1,2}|$ since 
$\calC_{\pos} = \calC_{2,0} \cup \calC_{1,2}$ and $$\chi_+(c)=\begin{cases} \tfrac{1}{2} & 
\mbox{if }c\in\calC_{2,0} \\ \tfrac{1}{4} & \mbox{if }c\in\calC_{1,2} \end{cases}.$$

By the definition of $\chi'$, the right hand side gives $\tfrac{1}{4}|\calK_{3,0}| +
\tfrac{1}{2}|\calK_{4,0}| +  \tfrac{1}{4}|\calK_{2,1}|$.
The proof is now complete by Remark \ref{rem: core retracing}.
\end{proof}

The next lemma shows that indeed $\chi'$ is non-positive.

\begin{lemma}\label{lem: non-positive contribution of chi'}
 $\chi'(c)\le 0$ for all $c\in\calC$.
\end{lemma}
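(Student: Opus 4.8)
The plan is to bound $\chi'(c)$ from above by a case analysis on where $c$ sits relative to the sets $\calK_{3,0}$, $\calK_{4,0}$, $\calK_{2,1}$. First, if $c\in\calC_{\pos}$ there is nothing to prove since $\chi'(c)=0$ by definition. So assume $c\in\calC_{\le 0}$, i.e. $\bbb{c}\ge 4$, and write $\chi'(c)=\chi_+(c)+\tfrac14 n_{3,0}+\tfrac12 n_{4,0}+\tfrac14 n_{2,1}$. The basic mechanism is Lemma~\ref{lem: gap of c containing kappa}: each limb $\kappa\in\calK$ contained in $c$ forces $\bbb{c}\ge\bbb{\kappa}+2$, and the limbs of each type have a fixed number of joints ($\bbb{\kappa}=3$ for $\kappa\in\calK_{3,0}$, $\bbb{\kappa}=4$ for $\kappa\in\calK_{4,0}$, $\bbb{\kappa}=3$ for $\kappa\in\calK_{2,1}$, counting its single \D-joint and the wiggle). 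Combined with Lemma~\ref{lem: K are disjoint}, which says distinct limbs of $\calK$ inside $c$ overlap only in a controlled way, I would show that the total number of joints of $c$ consumed by its $\calK$-limbs, together with the ``buffer'' of $2$ extra joints guaranteed by Lemma~\ref{lem: gap of c containing kappa}, already accounts for enough of $\bbb{c}$ that $\tfrac14\bbb{c}$ dominates the sum $1+\tfrac14 n_{3,0}+\tfrac12 n_{4,0}+\tfrac14 n_{2,1}$.

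More concretely, I would first handle the case where $c$ contains a single $\calK$-limb, say $n_{3,0}+n_{4,0}+n_{2,1}=1$. If $\kappa\in\calK_{4,0}$, Lemma~\ref{lem: gap of c containing kappa} gives $\bbb{c}\ge 6$, so $\chi'(c)=1-\tfrac14\bbb{c}+\tfrac12\le 1-\tfrac64+\tfrac12=0$. If $\kappa\in\calK_{3,0}$, then $\bbb{c}\ge 5$; but $\bdr{c}$ is even and $\bbl{c}$ is even when $\bdr{c}=0$ (Lemma~\ref{lem: properties of curves}(2),(4)), so in fact $\bbb{c}\ge 6$ unless $\bdr{c}=2$, and one checks $\chi'(c)=1-\tfrac14\bbb{c}+\tfrac14\le 0$ once $\bbb{c}\ge 5$; I would use Remark~\ref{rem: c and d are negative} to upgrade to $\bbb{c}\ge\bbb{\kappa}+3=6$ in the sub-case where the core of $\kappa$ is not a vertebra of its abutting curve, which is precisely the sub-case needed to close the gap. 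The case $\kappa\in\calK_{2,1}$ is the same arithmetic as $\calK_{3,0}$: $\bbb{c}\ge 5$ gives $\chi'(c)\le 0$.

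For the general case with several $\calK$-limbs in $c$, the key point is that by Lemma~\ref{lem: K are disjoint} the limbs of $\calK$ inside $c$ that lie on the same side ($\calC^+$ or $\calC^-$) are pairwise disjoint; since all limbs of a single curve $c$ lie on one side, distinct $\calK$-limbs of $c$ are disjoint arcs. Hence the joints of $c$ they occupy are disjoint, so $\bbb{c}\ge \sum_\kappa \bbb{\kappa} + 2$ where the $+2$ comes from applying the gap lemma to one of them (or, more carefully, $\bbb{c}\ge\sum_\kappa\bbb{\kappa}+2$ follows because adding any one limb already needs the $2$-buffer and the others contribute disjointly). Plugging in $\bbb{\kappa}\ge 3$ for each limb, one gets $\bbb{c}\ge 3(n_{3,0}+n_{4,0}+n_{2,1})+2$, hence
\[
\chi'(c)=1-\tfrac14\bbb{c}+\tfrac14 n_{3,0}+\tfrac12 n_{4,0}+\tfrac14 n_{2,1}
\le 1 - \tfrac34(n_{3,0}+n_{4,0}+n_{2,1}) - \tfrac12 + \tfrac12(n_{3,0}+n_{4,0}+n_{2,1}) \le 0,
\]
since $n_{3,0}+n_{4,0}+n_{2,1}\ge 1$ and $\tfrac14(n_{3,0}+n_{2,1})+\tfrac12 n_{4,0}\le\tfrac12(n_{3,0}+n_{4,0}+n_{2,1})$.

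The main obstacle I anticipate is the borderline cases, where $c$ carries exactly one $\calK_{3,0}$- or $\calK_{2,1}$-limb and $\bbb{c}$ is as small as $4$ or $5$: here the crude estimate $\bbb{c}\ge\bbb{\kappa}+2$ is not quite enough, and one must exploit the finer structural information — the parity constraints of Lemma~\ref{lem: properties of curves}, the twist-reduced and prime hypotheses (to rule out the offending small configurations, exactly as in the proof of Lemma~\ref{lem: gap of c containing kappa}), and Remark~\ref{rem: c and d are negative} — to push $\bbb{c}$ up by one more. Getting the bookkeeping of which buffer applies in which sub-case correct, without double-counting, is the delicate part of the argument.
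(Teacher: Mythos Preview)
Your approach is essentially the paper's: split into cases on $n=n_{3,0}+n_{4,0}+n_{2,1}$, use disjointness of the $\calK$-limbs (Lemma~\ref{lem: K are disjoint}) to bound $\bbb{c}$ from below, and use the gap lemma (Lemma~\ref{lem: gap of c containing kappa}) when $n=1$. However, there are two genuine slips and one unnecessary detour.

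First, you omit the case $n=0$; it is trivial ($\chi'(c)=\chi_+(c)\le 0$ since $c\in\calC_{\le 0}$), but it must be said. Second, your claimed bound $\bbb{c}\ge\sum_\kappa\bbb{\kappa}+2$ for the multi-limb case is false: a curve $c$ can be exactly the union of two limbs in $\calK$ (this is precisely case~(4) of Lemma~\ref{lem: classification of chi'=0}), so the ``$+2$ buffer'' need not survive when a second limb is present. Fortunately you do not need it: disjointness alone gives $\bbb{c}\ge\sum_\kappa\bbb{\kappa}=3n_{3,0}+4n_{4,0}+3n_{2,1}$, and plugging this in yields $\chi'(c)\le 1-\tfrac12 n$, which is $\le 0$ once $n\ge 2$. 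This is exactly the paper's inequality~\eqref{eq: chi' inequality}. Third, your anticipated ``main obstacle'' does not exist: for $n=1$ with $\kappa\in\calK_{3,0}\cup\calK_{2,1}$, the gap lemma gives $\bbb{c}\ge 5$, and then $\chi'(c)=1-\tfrac14\bbb{c}+\tfrac14\le 0$ already. No parity arguments, no Remark~\ref{rem: c and d are negative}, no twist-reduced/prime hypotheses are needed here. (That remark is used later, in Lemma~\ref{lem: classification of chi'=0}, to pin down the \emph{equality} cases, not to prove non-positivity.) Once you drop these complications, your proof is the paper's proof.
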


\begin{proof}
If $c\in\calC_{\pos}$ then $\chi'(c)=0$. Let $c\in\calC_{\le 0}$ and let $n_{3,0},n_{4,0},n_{2,1}$ 
be as in Definition ~\ref{def: distributing} of $\chi'(c)$. 
By Lemma \ref{lem: K are disjoint}, the limbs of $\calK$ in $c$ are disjoint and therefore
\begin{equation}\label{eq: inequality from disjointness}
    \bbb{c} \ge \sum_{c\supset \kappa\in \calK} \bbb{\kappa}.
\end{equation}

Hence,
\begin{align}\label{eq: chi' inequality}
\begin{split}
\chi'(c)&=\chi_+(c)+\tfrac{1}{4}n_{3,0} + \tfrac{1}{2}n_{4,0} +\tfrac{1}{4}n_{2,1} 
 = (1-\tfrac{1}{4}\bbb{c}) +\tfrac{1}{4}n_{3,0} + \tfrac{1}{2}n_{4,0} +\tfrac{1}{4}n_{2,1} \\
& \le 1-\sum_{c\supset \kappa\in \calK} \tfrac{1}{4}\bbb{\kappa} + 
\tfrac{1}{4}n_{3,0} + \tfrac{1}{2}n_{4,0} +\tfrac{1}{4}n_{2,1} \\
& = 1 
+ \sum_{c\supset\kappa\in\calK_{3,0}} (\tfrac{1}{4} - \tfrac{1}{4}\bbb{\kappa})
+\sum_{c\supset\kappa\in\calK_{4,0}} (\tfrac{1}{2} - \tfrac{1}{4}\bbb{\kappa})
+ \sum_{c\supset\kappa\in\calK_{2,1}} (\tfrac{1}{4} - \tfrac{1}{4}\bbb{\kappa})\\
&=1 
+ \sum_{c\supset\kappa\in\calK_{3,0}} (\tfrac{1}{4} - \tfrac{1}{4}
\cdot 3)
+\sum_{c\supset\kappa\in\calK_{4,0}} (\tfrac{1}{2} - \tfrac{1}{4}\cdot 4)
+ \sum_{c\supset\kappa\in\calK_{2,1}} (\tfrac{1}{4} - \tfrac{1}{4}\cdot 3)\\
&= 1-\tfrac{1}{2}(n_{3,0}+n_{4,0}+n_{2,1}).
\end{split}
\end{align}

Now the argument is divided into cases depending on the sum $n=n_{3,0}+n_{4,0}+n_{2,1}$.

\vskip7pt

\textbf{Case 0. $n=0$.} We have $\chi'(c)=\chi_+(c)$. But since 
$c\in \calC_{\np}$ we have $\chi_+(c)\le 0$ and we are done.

\vskip7pt

\textbf{Case 1. $n=1$.} That is, $c$ contains a single subarc 
$\kappa\in \calK_{3,0} \cup \calK_{4,0} \cup \calK_{2,1}$. 
By Lemma \ref{lem: gap of c containing kappa}, $\bbb{c} \ge \bbb{\kappa}+2$. 
If $\kappa\in \calK_{4,0}$ we get $\bbb{c} \ge 6$ and thus
$\chi'(c) = 1-\tfrac14 \bbb{c} + \tfrac12 \le 0$.
Similarly, if $\kappa\in\calK_{3,0}\cup\calK_{2,1}$ we get 
$\bbb{c} \ge 5$ and thus $\chi'(c) = 1-\tfrac14 \bbb{c} + \tfrac14 \le 0$.

\vskip7pt

\textbf{Case 2. $n\ge 2$.} In this case we are done by 
inequality \eqref{eq: chi' inequality}.
\end{proof}

\begin{corollary}\label{cor: L is nonsplit} The link $\mathcal{L}$ is non-split nor the unknot.

\begin{proof} Assume, in contradiction,  that $\bbS^3\ssm \NN(\calL)$ has an 
essential sphere or a disk bounding a component of $L$. By Lemma \ref{lem: properties of curves} 
we may assume that $S$ is taut. Let $\calC$ be its curves of intersection With $P^{\pm}$. By 
Lemma \ref{lem: summing chi' gives Euler char}, 
$$0<\chi(S) = \sum_{c\in \calC} \chi'(c).$$ 
However, this contradicts Lemma \ref{lem: non-positive contribution of chi'} 
which states that $\chi'(c)\le 0$ for all $c\in\calC$.
\end{proof}
\end{corollary}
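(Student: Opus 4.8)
The plan is a short proof by contradiction that feeds a hypothetical essential surface of positive Euler characteristic into the two bookkeeping identities proved above. Suppose $\calL$ is split or the unknot. Then $\bbS^3\ssm\NN(\calL)$ is either reducible, so that it contains an essential $2$-sphere $S$ with $\chi(S)=2$, or it contains a properly embedded disk $S$ bounding (the longitude of) a component of $L$, so that $\chi(S)=1$; in particular $\chi(S)>0$ in all cases. Since $\partial S$ is non-meridional, Lemma~\ref{lem: normal form} lets us isotope $S$ into normal position, and hence the curve-of-intersection apparatus of \S\ref{sec: Curves and the Euler Characteristic} applies to $S$.

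First I would arrange that $S$ is taut. If $\calL$ is split, minimize the complexity of $S$ over all essential $2$-spheres; this is case (i) of the definition of taut. If instead $S$ is a disk, then the component of $L$ it bounds is split from all others, so either $\calL$ is split --- already handled --- or $\calL$ is that single component and hence the unknot, in which case $\bbS^3\ssm\calL$ is irreducible and we may minimize the complexity of $S$ in its isotopy class, which is case (ii). Either way we obtain a taut $S$ with $\chi(S)>0$, together with its collection $\calC$ of intersection curves with $P^\pm$; and Lemmas~\ref{lem: properties of curves}, \ref{lem: summing chi' gives Euler char} and~\ref{lem: non-positive contribution of chi'} are available since $D(\calL)$ is connected, prime, twist-reduced and $3$-highly twisted.

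Then the contradiction is immediate: Lemma~\ref{lem: summing chi' gives Euler char} gives $\chi(S)=\sum_{c\in\calC}\chi'(c)$, while Lemma~\ref{lem: non-positive contribution of chi'} gives $\chi'(c)\le 0$ for every $c\in\calC$, whence $\chi(S)\le 0$, contradicting $\chi(S)>0$. (The degenerate possibility $\calC=\emptyset$ causes no trouble: the sum is then $0$, still contradicting positivity; geometrically, $S$ simply cannot be disjoint from $P^+\cup P^-$ because $H^\pm$ are $3$-balls.) Hence no such $S$ exists, so $\calL$ is neither split nor the unknot. There is no genuine obstacle here, as the work is entirely in the preceding lemmas; the only point deserving a moment's care is the reduction to a taut surface --- specifically, verifying that the disk case forces irreducibility of $\bbS^3\ssm\calL$ so that case (ii) of the definition of taut is legitimately applicable.
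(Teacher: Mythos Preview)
Your argument is correct and follows essentially the same route as the paper's: assume a positive-Euler-characteristic essential surface, arrange it to be taut, and then derive a contradiction from $\chi(S)=\sum_{c\in\calC}\chi'(c)\le 0$ via Lemmas~\ref{lem: summing chi' gives Euler char} and~\ref{lem: non-positive contribution of chi'}. Your treatment of the disk case (reducing to either the split case or a single unknotted component so that case~(ii) of tautness applies) is in fact more careful than the paper, which simply asserts tautness with a somewhat loose reference to Lemma~\ref{lem: properties of curves}.
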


\begin{lemma}\label{lem: classification of chi'=0}
Let $S$ be a taut surface with $\chi(S)=0$, then any curve $c\in\calC$ is one of the following:
\begin{enumerate}
    \item $c\in\calC_{2,0} \cup \calC_{1,2}$;
    \vskip7pt
    \item $\bbb{c}=4$, i.e. $c\in\calC_{4,0} \cup\calC_{2,2} \cup \calC_{0,4}$;
    \vskip7pt
    \item $c$ contains a limb $\kappa\in\calK$ and has $\bbb{c}=\bbb{\kappa} +2$,
    \vskip7pt
    \item $c$ is the union of two limbs $\kappa_1,\kappa_2\in\calK$.
\end{enumerate}
Moreover, the cores of arcs in $\calK_{3,0}$ are vertebrae of their abutting curves.
\end{lemma}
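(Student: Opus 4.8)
The plan is to derive the statement directly from Lemma~\ref{lem: summing chi' gives Euler char} and Lemma~\ref{lem: non-positive contribution of chi'}: since $\chi(S)=\sum_{c\in\calC}\chi'(c)$ and every summand is $\le 0$, the assumption $\chi(S)=0$ forces $\chi'(c)=0$ for each $c\in\calC$. The lemma is thus a matter of classifying the curves with $\chi'(c)=0$, which I would do by revisiting the case analysis in the proof of Lemma~\ref{lem: non-positive contribution of chi'} and recording exactly when its inequalities are equalities.

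If $c\in\calC_{\pos}$ then $\chi'(c)=0$ by definition, giving case~(1). Otherwise $c\in\calC_{\np}$, and I split according to $n=n_{3,0}+n_{4,0}+n_{2,1}$. When $n=0$ we have $\chi'(c)=\chi_+(c)=1-\tfrac14\bbb{c}$, so $\chi'(c)=0$ exactly when $\bbb{c}=4$; with parts~(2) and~(4) of Lemma~\ref{lem: properties of curves} and Remark~\ref{rem: Not all C} this places $c$ in $\calC_{4,0}\cup\calC_{2,2}\cup\calC_{0,4}$, i.e., case~(2). When $n=1$, Lemma~\ref{lem: gap of c containing kappa} gives $\bbb{c}\ge\bbb{\kappa}+2$ for the unique $\calK$-limb $\kappa$ in $c$, and the computation in Case~1 of the proof of Lemma~\ref{lem: non-positive contribution of chi'} shows that $\chi'(c)=0$ holds precisely when $\bbb{c}=\bbb{\kappa}+2$, which is case~(3). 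When $n\ge 2$, inequality~\eqref{eq: chi' inequality} reads $\chi'(c)\le 1-\tfrac12 n$, so $n\ge 3$ is impossible and $n=2$ forces every inequality in the chain~\eqref{eq: chi' inequality} to be an equality; the only genuine inequality there is~\eqref{eq: inequality from disjointness}, so $\bbb{c}=\bbb{\kappa_1}+\bbb{\kappa_2}$ for the two $\calK$-limbs $\kappa_1,\kappa_2$ in $c$, which are disjoint by Lemma~\ref{lem: K are disjoint}. Since limbs have endpoints in the interiors of bones, the two arcs of $c\ssm(\kappa_1\cup\kappa_2)$ then carry no joints and each lies in a single bone; absorbing them into the $\kappa_i$ changes nothing up to isotopy of limbs, so $c=\kappa_1\cup\kappa_2$, which is case~(4).

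For the final sentence I would argue as follows. Fix $\kappa\in\calK_{3,0}$ and let $c$ be the unique curve of $\calC$ containing it (Remark~\ref{rem: core retracing}); then $\chi'(c)=0$, and since $\kappa$ is a terminal layer limb we have $\chi_+(c)<0$, hence $c\in\calC_{\np}$ with $n\ge 1$, so by the previous paragraph $c$ is either in case~(3) with $\bbb{c}=\bbb{\kappa}+2$, or in case~(4) with $c=\kappa\cup\kappa_2$. Suppose for contradiction that the core of $\kappa$ is not a vertebra of its abutting curve. Then Remark~\ref{rem: c and d are negative} gives $\bbb{c}\ge\bbb{\kappa}+3$, which rules out case~(3). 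For case~(4), the branch of the proof of Lemma~\ref{lem: onion} that produces a non-vertebra core shows that $c$ has the rigid shape of a curve wiggling through two twist regions, with $\kappa$ occupying one full wiggle together with a single \I-joint of the other; since every limb of $\calK$ contains a full wiggle and $c\ssm\kappa$ has no full wiggle available, there is no room for a second $\calK$-limb $\kappa_2$, contradicting $n=2$. Hence the core of $\kappa$ is a vertebra of its abutting curve.

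I expect the bookkeeping along~\eqref{eq: chi' inequality}, and the observation that two disjoint limbs accounting for all the joints of $c$ must exhaust $c$ up to isotopy, to be routine. The main obstacle is the ``moreover'' clause, and inside it the exclusion of case~(4) for a non-vertebra core: this forces one back into the onion construction of Lemma~\ref{lem: onion} to pin down the shape of $c$ precisely enough that no second distinguished limb can be fitted alongside $\kappa$.
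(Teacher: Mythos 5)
Your derivation of the four-case classification is correct and is essentially the paper's argument made explicit: the paper likewise deduces $\chi'(c)=0$ for every $c\in\calC$ from Lemmas~\ref{lem: summing chi' gives Euler char} and~\ref{lem: non-positive contribution of chi'} and then reads the cases off the equality analysis in the proof of Lemma~\ref{lem: non-positive contribution of chi'}. Your treatment of $n=0$, $n=1$ and $n\ge 3$, and your observation that $n=2$ forces equality in~\eqref{eq: inequality from disjointness} so that $c=\kappa_1\cup\kappa_2$ up to absorbing the joint-free complementary arcs, all check out.

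The gap is in the ``moreover'' clause, specifically in your exclusion of case~(4). You assert that the non-vertebra branch of Lemma~\ref{lem: onion} gives $c$ ``the rigid shape of a curve wiggling through two twist regions,'' so that $c\ssm\kappa$ contains no full wiggle. But that branch only shows that the terminal curve \emph{contains} two wiggles and identifies the limb $\kappa=\kappa_n$ (one full wiggle plus a single \I-joint of the other wiggle); it places no constraint on the remaining joints of $c$. In case~(4) one has $\bbb{c}=\bbb{\kappa}+\bbb{\kappa_2}$, and parity (Lemma~\ref{lem: properties of curves}(2),(4)) forces $\kappa_2\in\calK_{3,0}$, hence $\bbb{c}=6$: besides the three \I-joints of $\kappa$ there are three more, only one of which (the second half of the split wiggle) is pinned down, while the other two are a priori free and could in particular form a full extremal wiggle, leaving exactly the ``room'' for $\kappa_2$ that you claim is absent. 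To close the argument you would need to show that no limb of $\calK_{3,0}$ can have the forced half-wiggle \I-joint as one of its three joints, using disjointness from $\kappa$ together with primeness and twist-reducedness. For comparison, the paper's own proof of the ``moreover'' clause invokes only Remark~\ref{rem: c and d are negative} and Case~1 ($n=1$) of Lemma~\ref{lem: non-positive contribution of chi'}, i.e., it addresses your case~(3) and is silent on case~(4); so you have correctly isolated the delicate point, but the argument you give for it does not yet go through.
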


\begin{proof}
By Lemma \ref{lem: summing chi' gives Euler char} and \ref{lem: non-positive contribution 
of chi'} each curve $c\in\calC$ must have $\chi'(c)=0$. It follow from the definition of 
$\chi'$ that the above are the only cases in which $\chi'(c)=0$. 

By Remark \ref{rem: c and d are negative} and Case 1 of the proof of 
Lemma \ref{lem: non-positive contribution of chi'}, if $c$ contains an arc 
$\kappa\in\calK_{3,0}$ whose core is not a vertebra of the abutting curve $c'$ 
then $\chi'(c)<0$.
\end{proof}

In the following table, we summarise the possible sets of curves with $\chi'=0$ 
and assign them names:

\begin{table}[H]\label{tab: table}
    \centering
    \begin{tabular}{c|c | c| c | c}
     Name of set & $\bbb\cdot$ & $\bbl\cdot$ & $\bdr\cdot$ & The set's composition/classification \\[0.85ex]
     \hline
     $\calC_{2,0}$ & 2 & 2 & 0 & type (i) or (ii)\\[0.85ex]
     $\calC_{1,2}$ & 3 & 1 & 2 & type (iii), (iv) or (v)\\[0.85ex]
     $\calC_{4,0}$ & 4 & 4 & 0 & 4 turns or 1 wiggle and 2 turns or 2 wiggles \\[0.85ex]
     $\calC_{2,2}$ & 4 & 2 & 2 & 2 \I-joints and 1 \D-bone\\[0.85ex]
     $\calC_{0,4}$ & 4 & 0 & 4 & 2 \D-bones\\[0.85ex]
     $\calK_{3,0} + 0,2$ & 5 & 3 & 2 &  1 limb of $\calK_{0,3}$ + \D-bone\\[0.85ex]
     $\calK_{2,1} + 1,1$ & 5 & 3 & 2 &  1 limb of $\calK_{2,1}$ + 1 \I-joint + 1 \D-joint\\[0.85ex]
     $\calK_{4,0} + 2,0$ & 6 & 6 & 0 &  1 limb of $\calK_{0,4}$ + 2 \I-joints\\[0.85ex]
     $\calK_{4,0} + 0,2$ & 6 & 4 & 2 &  1 limb of $\calK_{0,4}$ + \D-bone\\[0.85ex]
     $\calK_{3,0} + \calK_{3,0}$ & 6 & 6 & 0 &  2 limbs of $\calK_{3,0}$ = 2 wiggles + 2 turns.\\[0.85ex]
     $\calK_{2,1} + \calK_{2,1}$ & 6 & 4 & 2 &  2 limbs of $\calK_{2,1}$ =  2 wiggles + 
     \D-bone.\\[0.85ex]     
     $\calK_{4,0} + \calK_{4,0}$ & 8 & 8 & 0 &  2 limbs of $\calK_{4,0}$ =  4 wiggles.\\[0.75ex]
\end{tabular}
    \caption{Classification of all curves with $\chi'(c)=0$.}
    \label{tab: classification table}
\end{table}

\vskip8pt

\section{Atoroidal and unannular}\label{sec: Atoroidal and unannular}

In this section we prove that if the link $L$ has a diagram which satisfies the 
conditions of Theorem \ref{thm: highly twisted implies hyperbolic}, then its complement 
does not contain essential annuli or tori (in particular  $\chi(S)=0$). Let $S$ denote 
such a 2-torus or annulus. After an isotopy, if need be, we may assume that $S$ is taut.
We first need the following technical lemmas.
 
\vskip7pt

\begin{claim}\label{claim: special triple cases}
The following three configurations for curves $c \in \calC$ are impossible: 
\begin{figure}[H]
    \includegraphics[]{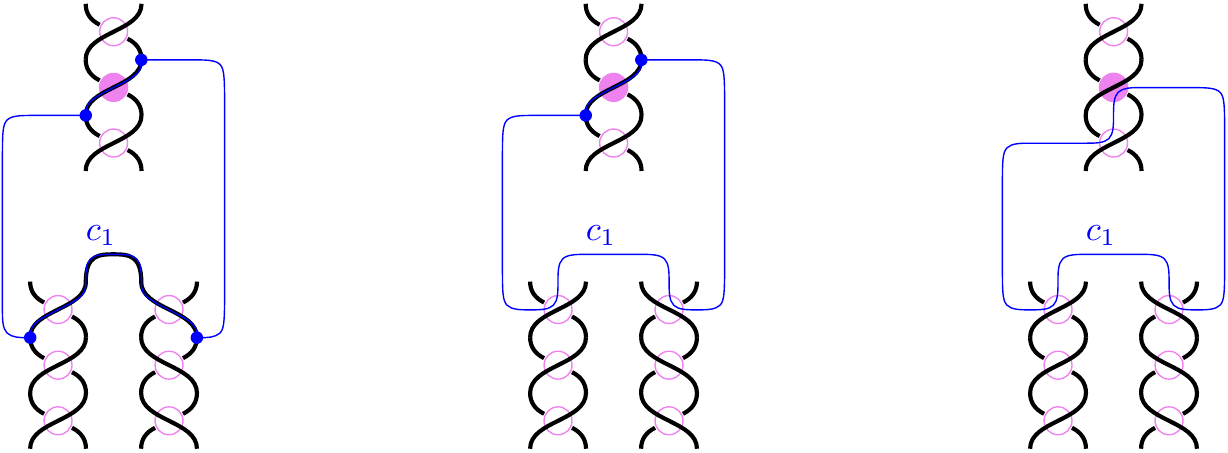}
\end{figure}
where the bubble  marked in pink is non-extremal.
\end{claim}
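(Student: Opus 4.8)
The plan is to analyze each of the three configurations in turn by examining the curve $\bar c$ that abuts $c$ along the \I-bone which is incident to the pink (non-extremal) bubble, and derive a contradiction either with tautness (Lemma~\ref{lem: properties of curves}), with the diagram being twist-reduced, or with the prime assumption. The key observation is that in a twist region with at least three crossings, the pink bubble being non-extremal forces it to have a neighbor bubble on each side inside the same twist region; this severely constrains how $c$ can pass through it and how the abutting curve $\bar c$ must behave.

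First I would set up notation: let $\tau$ be the \I-bone of $c$ that meets the pink bubble $B$, and let $\bar c \in \calC^\mp$ be the curve abutting $c$ along $\tau$. By Lemma~\ref{lem: properties of curves}(5), $\bar c$ can meet $B$ only once more, and then only in the \I-joint opposite to the one through which $\tau$ and its saddle pass. In each of the three configurations the arc $c$ near $B$ is forced into a specific local picture (a turn, a wiggle, or a strand passing straight through), so $\tau$ emerges from $B$ on a prescribed edge of the twist region. Tracing $\bar c$ from $\tau$: on one end it must re-enter the twist region at the bubble adjacent to $B$, and since $\bar c$ lies in the opposite plane, it is forced to traverse a saddle at that adjacent bubble too. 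I would then follow $\bar c$ to its next joint and show that the resulting closed-up curve, together with $c$, encloses a subdiagram violating one of our standing hypotheses.

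The main case split is according to which of the three pictures in \texttt{Triple.pdf} we are in. In the configuration where $c$ passes straight through $B$ (not turning), the abutting curve $\bar c$ together with the relevant portion of $c$ projects to a simple closed curve in $P$ meeting $D(L)$ in four points, two pairs each adjacent to a crossing, which is not a chain of bigons — i.e., a twist-reduction subdiagram, contradicting twist-reducedness. In the configuration where $c$ turns at the twist region containing $B$, the non-extremality of $B$ means the turn "skips over" at least one crossing, and then either $\bar c$ must meet some bubble twice non-oppositely (contradicting Lemma~\ref{lem: properties of curves}(5)) or $\bar c$ bounds a twist-reduction subdiagram; this is essentially the argument already used in the proof of Lemma~\ref{lem: onion} and Lemma~\ref{lem: gap of c containing kappa}. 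The third configuration is handled similarly, reducing either to Lemma~\ref{lem: properties of curves}(7) (the $\calC_{1,2}$ exclusion) or again to a forbidden twist-reduction subdiagram.

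The hard part will be verifying that in each case the projected curve really does bound a \emph{genuine} twist-reduction subdiagram (i.e., that the two pairs of intersection points sit adjacent to crossings in the required way and that the enclosed region is not merely a chain of bigons describing an integer tangle) — this requires carefully using that $B$ is non-extremal, so there is "room" on both sides of $B$ within the twist region, forcing the abutting curve to genuinely leave the bigon-chain pattern. I expect the bookkeeping of exactly which edges (length vs.\ width) of the twist region the bones $\tau$ and the bones of $\bar c$ meet to be the most delicate point, and I would organize it by drawing, for each of the three configurations, the forced local picture of $c \cup \bar c$ and appealing to the figures. Once that local picture is pinned down, the contradiction with twist-reducedness (or with Lemma~\ref{lem: properties of curves}(5) or (7)) is immediate.
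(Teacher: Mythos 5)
There is a genuine gap in your plan: you never invoke the hypothesis that is actually doing the work in this section, namely that $S$ is a torus or annulus, so $\chi(S)=0$ and hence, by Lemma~\ref{lem: summing chi' gives Euler char} and Lemma~\ref{lem: non-positive contribution of chi'}, \emph{every} curve satisfies $\chi'(c)=0$ and belongs to one of the finitely many types classified in Lemma~\ref{lem: classification of chi'=0} and Table~\ref{tab: classification table}. Without this, your step ``trace $\bar c$ to its next joint and show that the resulting closed-up curve bounds a twist-reduction subdiagram'' has no teeth: an abutting curve could a priori have arbitrarily many joints and wander anywhere in the diagram before closing up, and neither Lemma~\ref{lem: properties of curves}(5) nor (7) constrains it. The paper's proof proceeds precisely by observing that the visible limb of the abutting curve $c_2$ already carries three joints, that none of these joints can lie on a limb of $\calK$ (because $c_1$ is not in $\calC_{2,0}\cup\calC_{1,2}$ and is not a rib with a vertebra), and therefore by the classification $\bbb{c_2}=4$ — which is what forces $c_2$ to close up immediately in one of a few explicit ways.

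A second, related shortfall is that a single layer of abutting curves does not suffice. The paper must also analyze a further curve $c_3$ abutting $c_1$, and in one of the three configurations $c_3$ is forced (again by elimination through Table~\ref{tab: classification table}) to be of type $\calK_{4,0}+2,0$ with six joints, so it closes up only after passing through an additional twist region. Only after both $c_2$ and $c_3$ are pinned down does one assemble the dashed closed curve that bounds a twist-reduction subdiagram. Your proposal's one-step analysis, and your suggested fallback contradictions via tautness or primeness, would not reach a contradiction in that case. To repair the argument, replace the vague ``follow $\bar c$'' step with the explicit joint-count bookkeeping supplied by the $\chi'=0$ classification, and be prepared to iterate it one more layer.
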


\begin{proof}
We argue simultaneously that the three configurations are impossible.
In each of these cases, let $c_2$ (marked in orange) be the depicted 
curve abutting $c_1$. 

\begin{figure}[H]
    \includegraphics[]{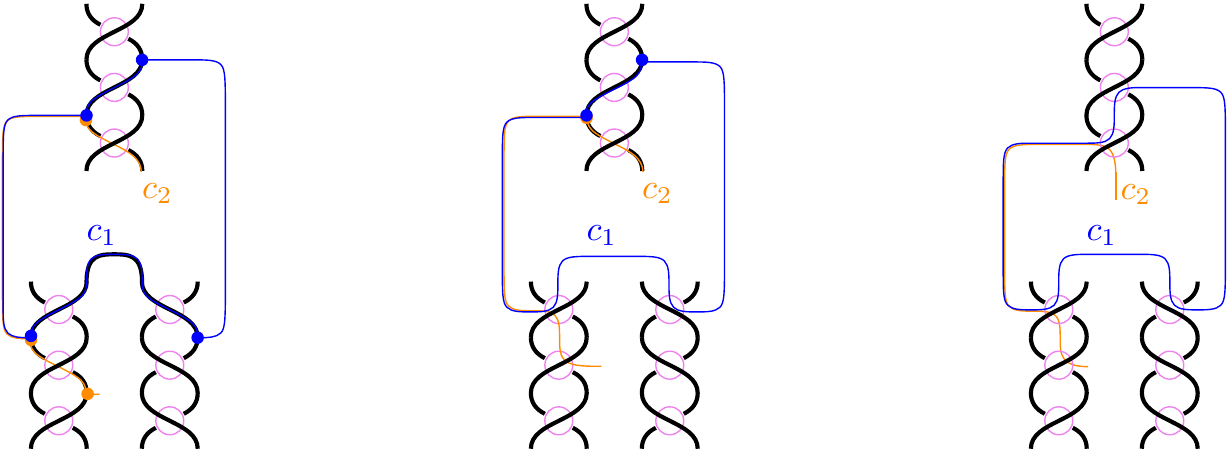}
\end{figure}
The limb of $c_2$ that is shown in the figure above has three joints. None of these 
joints belongs to a limb in $\calK$: in all cases, the curve $c_1$ is not in 
$\calC_{2,0}$ or $\calC_{1,2}$  nor a rib with a vertebra. By 
Lemma \ref{lem: classification of chi'=0}, it follows that $\bbb{c_2}=4$.  Thus the 
only way that $c_2$ can close up is depicted in in the following figures.

\begin{figure}[H]
    \includegraphics[]{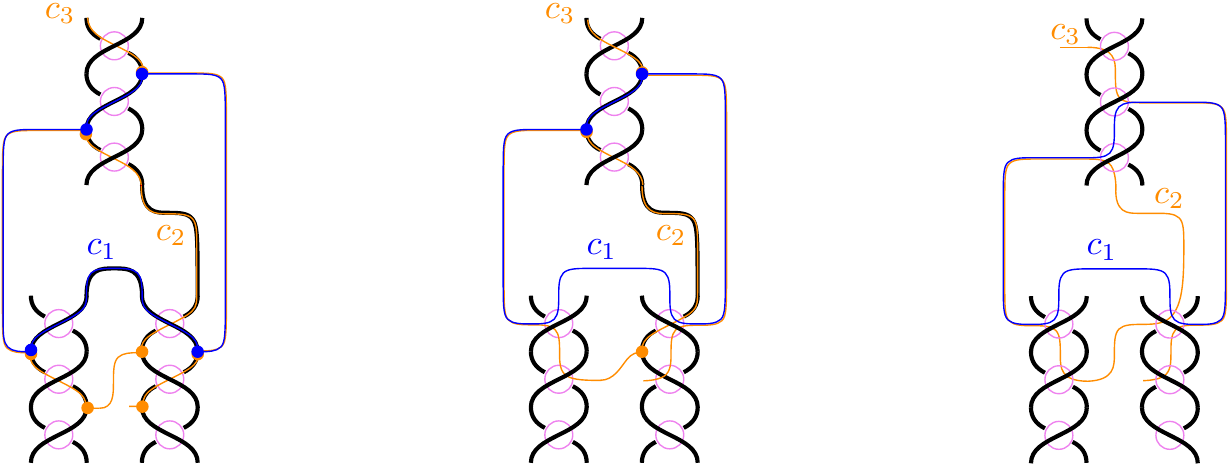}
\end{figure}

Let $c_3$ be the depicted curve abutting $c_1$. 
In the two left figures, one sees, as before, that $\bbb{c_3}=4$. In the figure on the 
right, $c_3$ is  in $\calK_{4,0}+{2,0}$ in the notation of Table \ref{tab: classification table}: 
Indeed, $c_3$ cannot have $\bbb{c_3}=4$, as otherwise it bounds a twist reduction subdiagram. 
Thus, $c_3$ must contain an arc of $\calK$. By elimination of the possibilities in 
Table \ref{tab: classification table}, $c_3$ must be in
$\calK_{4,0}+2,0$. Thus, in all cases, $c_3$ can close 
only after passing through an additional twist region as follows:

\begin{figure}[H]
    \includegraphics[]{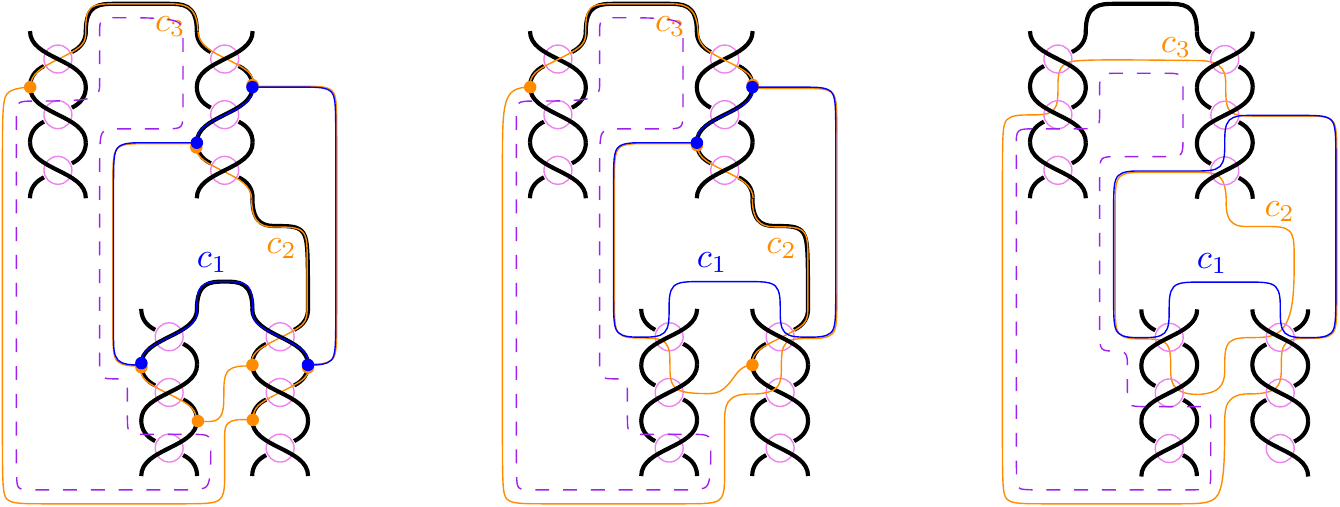}
\end{figure}

Thus, taking into account all possible configurations of the curves $c_2$ and $c_3$
determined by the stated configurations of the $c_1$ curves, the diagrams are seen to 
contained a closed curve depicted by the dashed curves in the figures. Each of 
the dashed curves bounds a twist-reduction subdiagram. This contradicts  the 
assumption that the diagrams are twist reduced  which finishes the proof of the claim.
\end{proof}

\begin{lemma}\label{lem: two turns}
If a curve $c\in\calC$ contains a bone $\tau$ connecting two turns of $c$ then one of 
the following holds:
\begin{enumerate}
    \vskip5pt
    \item the curve $c\in \calC_{2,0}$,
    \vskip5pt
    \item the curve $c$ is a rib, or
    \vskip5pt
    \item the \I-bone $\tau$ meets the width edge of both twist regions.
\end{enumerate}
In particular, $c$ cannot have three consecutive turns.
\end{lemma}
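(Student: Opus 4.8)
I would prove Lemma~\ref{lem: two turns} by a direct case analysis of the curve $c$ containing the bone $\tau$ which connects two turns, say at twist regions $T$ and $T'$, using the classification of curves with $\chi'=0$ (Table~\ref{tab: classification table}) as the backbone, together with the primeness and twist-reducedness of the diagram. First I would set up notation: let $\alpha,\alpha'$ be the \I-joints of the two turns of $c$ joined by $\tau$, lying in extremal bubbles of $T$ and $T'$ respectively, and consider which edges of $T$ and $T'$ the bone $\tau$ abuts. The bone $\tau$ enters each twist region through either its width edge or its length edge. If $\tau$ meets the width edge of both, we are in case (3) and there is nothing to prove. So I would assume $\tau$ meets the length edge of at least one of them, say $T$.

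The heart of the argument is then: \textbf{if $\tau$ meets the length edge of $T$, then $c$ must close up in a very constrained way.} Here I would invoke Lemma~\ref{lem: possible positive curves} and the fact that $S$ is taut. If $\bbb{c}<4$, then $c\in\calC_{2,0}$ by the classification (a curve with two turns and nothing else is type (i) or (ii)), giving case (1). If $\bbb{c}=4$, then besides the two turns, $c$ has exactly one more \I-joint or \D-joint worth of joints; since the two turns already contribute $\bbl{c}\ge 2$, and $c$ must be in $\calC_{4,0}$, $\calC_{2,2}$, or $\calC_{0,4}$ — but the latter two are ruled out because the two turns force $\bbl{c}\ge 2>0$, so $c\in\calC_{4,0}$. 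A curve in $\calC_{4,0}$ with two of its four \I-joints forming two turns and connected by $\tau$: if the remaining two \I-joints form a wiggle through an extremal bubble, $c$ is a rib (case (2)); otherwise I would argue, exactly as in the proof of Lemma~\ref{lem: onion} and Claim~\ref{claim: special triple cases}, that $c$ bounds a twist-reduction subdiagram (the curve runs through $T$, $T'$, and its projection encloses a chain of bigons that is not a trivial integer tangle), contradicting the hypothesis. If $\bbb{c}\ge 5$, I would use the classification table again: $c$ must then contain a limb $\kappa\in\calK$ (since $\chi'(c)=0$ forces one of the rows with $\bbb{c}\ge 5$), and the geometry of such a $\kappa$ — a core emanating from an extremal bubble of a wiggle — is incompatible with $\tau$ meeting the length edge of $T$ in the way required, unless again a twist-reduction subdiagram appears; I would extract the contradiction by tracing the abutting curve along $\tau$ as in Claim~\ref{claim: special triple cases}.

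For the ``in particular'' clause: suppose $c$ has three consecutive turns, at twist regions $T_1, T_2, T_3$, joined by bones $\tau_{12}$ and $\tau_{23}$. Applying the trichotomy to $\tau_{12}$: $c$ is not in $\calC_{2,0}$ (it has $\bbl{c}\ge 3$) and $c$ is not a rib (a rib is in $\calC_{4,0}$ with two turns, not three), so $\tau_{12}$ meets the width edge of both $T_1$ and $T_2$. Likewise $\tau_{23}$ meets the width edge of both $T_2$ and $T_3$. But then at $T_2$ the curve $c$ both enters and leaves through the width edge while turning inside — I would check that this forces the turn at $T_2$ to double back through the same region of $P\ssm D(L)$, so that either the configuration is one of those excluded by Claim~\ref{claim: special triple cases} (a non-extremal bubble yielding a twist-reduction subdiagram after tracing abutting curves $c_2, c_3$), or $c$ directly bounds a twist-reduction subdiagram. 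Either way we contradict the hypotheses.

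\textbf{Main obstacle.} The delicate point is the $\bbb{c}\ge 5$ subcase and, relatedly, ruling out three consecutive turns: in both places one must convert a purely combinatorial statement about where the bone $\tau$ meets the length/width edges into the \emph{planar} conclusion that some closed curve in the diagram bounds a twist-reduction subdiagram. This requires carefully identifying the abutting curves and their limbs (as in the proof of Claim~\ref{claim: special triple cases}), checking via Table~\ref{tab: classification table} that they too have $\bbb{\cdot}=4$ or lie in a specific $\calK_{4,0}+\{2,0\}$-type row, and then reading off from the resulting picture a dashed closed curve enclosing a chain of bigons that is not a trivial integer $2$-tangle. I expect this to be the bulk of the work and the step most in need of a supporting figure; the low-$\bbb{c}$ cases are essentially immediate from the classification.
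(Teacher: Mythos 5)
Your low-complexity cases are fine ($\bbb{c}<4$ gives $c\in\calC_{2,0}$, and the extremal-wiggle subcase of $\calC_{4,0}$ gives a rib), and your derivation of the ``in particular'' clause from the trichotomy is essentially the paper's (a turn's two adjacent bones leave its twist region through edges of different types, so they cannot both meet the width edge of the middle region). But the core of your argument has a genuine gap: in the non-rib $\calC_{4,0}$ case and the $\bbb{c}\ge 5$ case you assert that $c$ itself (or its immediate abutting curve, traced as in Claim~\ref{claim: special triple cases}) bounds a twist-reduction subdiagram. That is not true in general -- Table~\ref{tab: classification table} explicitly allows, e.g., curves in $\calC_{4,0}$ with four turns and curves in $\calK_{3,0}+\calK_{3,0}$, and such a curve containing a two-turn bone $\tau$ need not enclose anything resembling a twist-reduction subdiagram; the obstruction is simply not visible on $c$ or on the first abutting curve.

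The paper's proof instead splits on \emph{which edges $\tau$ meets}, not on $\bbb{c}$. If $\tau$ meets exactly one width edge it is a vertebra, and one runs Lemma~\ref{lem: onion} with $c_0=c$, $\tau_0=\tau$: the contradiction is extracted from the \emph{terminal} layer curve $c_n$ (its limb $\kappa_n$ has three \I-joints, is not in $\calK_{3,0}$ -- else retracing would force $c_0\in\calC_{2,0}$ of type (i) -- and meets no limb of $\calK$, which no $\chi'=0$ row permits). The propagation through an a priori unbounded chain of rib layers is exactly what your proposal has no substitute for. If $\tau$ meets both length edges, the argument is about the abutting curve $c'$, which has two wiggles joined by the core $\tau$; classification forces $\kappa\in\calK_{4,0}$ and hence $c\in\calC_{2,0}$ of type (ii), a contradiction. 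Neither mechanism appears in your proposal, so as written the middle of your argument does not close.
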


\begin{proof}
Let $\tau$ be a bone connecting two turns of $c$ it is therefore an \I-bone. Assume in  
contradiction that $\tau$ and $c$ do not satisfy neither of (1) - (3) of the lemma. 
That is, $c$ is not a curve in $\calC_{2,0}$ nor a rib, and $\tau$ does not meet 
the width edge of both twist regions. There are two cases to consider depending 
on whether $\tau$ meets a width edge or not.

If $\tau$ meets a width edge, then $\tau$ is a vertebra, i.e., it meets the length edge of one twist 
box and the width of the other. If this occurs set $c_0=c,\tau_0=\tau$. Since $c$ is assumed not 
to be a rib, it follows from Lemma \ref{lem: onion} that there exist curves $c_1,\dots,c_n$, limbs
$\kappa_1,\dots,\kappa_n$, and vertebrae $\tau_1,\dots,\tau_{n-1}$ so that the terminal layer 
$c_n$ has $\chi_+(c_n)<0$.  Moreover, note that the limb $\kappa_n$ is not in $\calK_{3,0}$ 
as otherwise by the uniqueness property, assured  in Lemma \ref{lem: onion}, the ``initial'' 
layer curve $c_0$ must be a curve in $\calC_{2,0}$ of type (i). This implies that $\kappa_n$ does 
not meet any limb of $\calK$.  As otherwise,  as in the  proof of Lemma \ref{lem: K are disjoint}, 
one can prove that $\kappa_n$ and the limb it meets must be equal. However, by Lemma 
\ref{lem: classification of chi'=0}, there is no curve, with $\chi'=0$, which has three \I-joints 
that do not belong to a limb of $\calK$.

If $\tau$ does not meet a width edge, then $\tau$ meets the length edge of both twist regions. 
It follows that the curve $c'$ abutting $c$ along $\tau$ has two wiggles which are connected 
by $\tau$. The curve $c'$ cannot be in $\calC_{4,0}$ as otherwise it bounds a twist-reduction 
subdiagram. By Lemma \ref{lem: classification of chi'=0}, one of the wiggles must meet a 
limb $\kappa\in \calK$. Since $\tau$ is a core of $c'$, it must be the core $\kappa$. It follows 
that $\kappa\in \calK_{4,0}$ and that $c\in\calC_{2,0}$ is of type (ii), in contradiction to our 
assumption.

In both cases, whether $\tau$ meets a width edge or not, we arrived to a contradiction. Hence, 
$c$ must satisfy one of (1) - (3).

Finally, if $c$ has three consecutive turns then $c$ is not a rib nor a curve in $\calC_{2,0}$. One of 
the two bones between the turns of $c$ must meets a length edge and a width edge, contradicting (3).
\end{proof}

\begin{definition}\label{def: good}
A curve $c\in \calC$ is \emph{good} if it is in $\calC_{2,0}$ or one of the following forms:

\begin{figure}[H]
    \includegraphics[width = 0.7\textwidth]{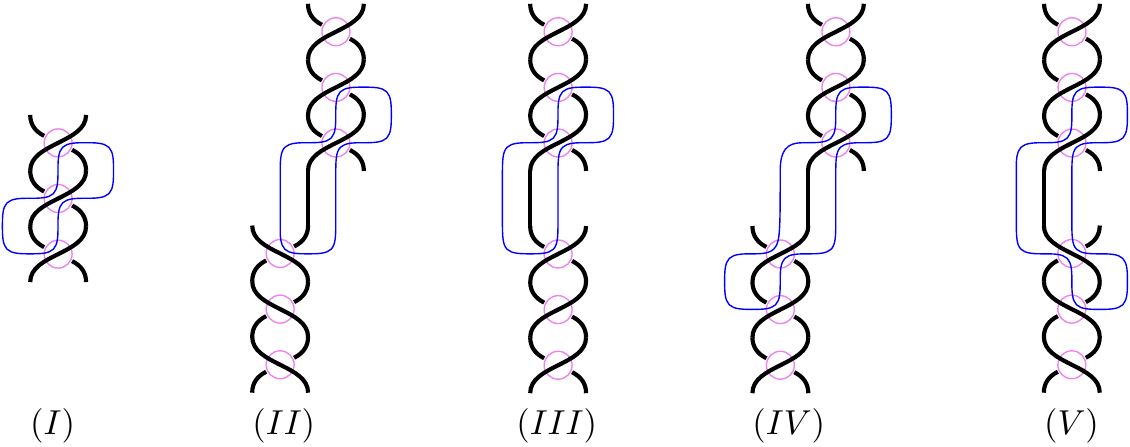}
    \caption{The five configurations of good curves which are not in $\calC_{2,0}$}
    \label{fig: good curves}
\end{figure}
We will say that $c$ is  good of type (I) - (V) accordingly. Otherwise, $c$ is 
called \emph{bad}.
\end{definition}

\begin{remark}\label{rem: Good curevs are good}
Note that if $S$ is a boundary parallel torus, then its intersection curves are good. 
A key observation is that the converse holds. That is, if all the curves of intersection 
of $S$ with $P$ are good then $S$ is a boundary parallel torus. Therefore, our goal in the 
next claims is to show that the curves of intersection of a taut surface $S$ with $\chi(S)=0$ 
are good.
\end{remark}

\begin{lemma}\label{lem: returning is good}
If a curve $c\in\calC$ passes through a bubble 
more than once, then $c$ is good.
\end{lemma}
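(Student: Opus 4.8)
The plan is to analyze what happens when a curve $c$ revisits a bubble $B$, using Lemma \ref{lem: properties of curves}(5) as the starting point: if $c$ meets $B$ more than once, it does so in exactly two \emph{opposite} \I-joints $\alpha_1,\alpha_2$ lying in the same saddle. So $c$ enters and exits $B$ along two \I-bones on either side of the saddle; call these $\tau,\tau'$. The two \I-bones $\tau,\tau'$ are in the two regions of $P\ssm D(L)$ adjacent to the length edges of the twist region $T$ containing $B$ (since opposite \I-joints of a saddle exit through opposite length-side regions). The key structural fact is that $c$ then traverses $T$ fully through $\alpha_1$, leaves, does something outside, comes back, traverses $T$ fully through $\alpha_2$ — so $c$ contains a sub-limb that looks like a ``return through a twist region.'' First I would set up this picture carefully and name the pieces: the two turns or wiggles of $c$ at $B$, and the arc $\delta$ of $c$ connecting the far ends of $\tau$ and $\tau'$ (the part of $c$ "on the other side").

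Next I would run a case analysis on $\bbb{c}$, exactly parallel to the machinery already built. Since $\chi(S)=0$ is \emph{not} assumed in this lemma (it appears before the $\chi=0$ reduction is complete — actually it is in the atoroidal/unannular section, so we do have $\chi(S)\ge 0$ from Corollary \ref{cor: L is nonsplit}, hence $\chi(S)=0$ for a torus/annulus and $>0$ is excluded), I can invoke Lemma \ref{lem: classification of chi'=0}: every curve has $\chi'(c)=0$ and falls into the finite list of Table \ref{tab: classification table}. So $\bbb{c}\in\{2,3,4,5,6,8\}$ with very restricted shapes. The curve $c$ meeting $B$ twice already accounts for two \I-joints; if $c$ meets $B$ via two turns then by Lemma \ref{lem: two turns} either $c\in\calC_{2,0}$ (good by definition), or $c$ is a rib (which is good of type (I) in Figure \ref{fig: good curves} — I should check the rib is listed among good curves), or the connecting \I-bone meets the width edge of both twist regions; in the last sub-case $c$ then has to close up, and I'd use Claim \ref{claim: special triple cases} and the twist-reduced hypothesis to force it into one of the good shapes. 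If instead $c$ meets $B$ via two wiggles (or a wiggle and a turn), I chase the abutting curves and the limbs of $\calK$ emanating from the extremal bubbles, again using the table to bound $\bbb{c}$ and eliminate non-good configurations.

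The cleanest way to organize it: observe that a curve passing twice through $B$ must, in the terminology of the paper, either (a) turn twice at $T$, or (b) wiggle (extremally or not) at $T$ in a way that uses both \I-joints of the saddle, or (c) pass through $B$ once as part of a wiggle and once as part of a turn. In case (a), apply Lemma \ref{lem: two turns}; in cases (b),(c), note the \I-bone leaving the extremal bubble is a core, so $c$ contains a limb of $\calK$ (from Lemma \ref{lem: onion} / Definition \ref{def:Definition of K}), and then by Lemma \ref{lem: gap of c containing kappa} and the $\chi'=0$ classification $c$ is pinned down to one of the rows $\calK_{\ast}+\ast$ of Table \ref{tab: classification table}; I'd then check each such row is a good curve by comparing with Figure \ref{fig: good curves}. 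Throughout, any configuration that threatens to be bad gets killed because its projection to $P$ encloses a twist-reduction subdiagram, contradicting twist-reducedness, or it violates Lemma \ref{lem: properties of curves}(7) / Claim \ref{claim: special triple cases}.

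The main obstacle I anticipate is the bookkeeping in case (b)/(c): when the extremal wiggle shares $B$ with a second visit, I have to make sure the ``core'' machinery of Lemma \ref{lem: onion} actually applies (i.e., that the relevant \I-bone really is a vertebra of the abutting curve, or else $\chi'(c)<0$ by Remark \ref{rem: c and d are negative}), and then verify, row by row in Table \ref{tab: classification table}, that the only surviving shapes are precisely the five good ones in Figure \ref{fig: good curves} plus $\calC_{2,0}$. This is a finite but fiddly check; the conceptual content is just ``Lemma \ref{lem: properties of curves}(5) + Lemma \ref{lem: two turns} + the $\chi'=0$ table + twist-reducedness leave no room for a bad curve.'' I would present it as: assume $c$ is bad and derives a contradiction in each of the (finitely many) shapes allowed by the table for a curve revisiting a bubble.
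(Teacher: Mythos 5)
Your proposal follows essentially the same route as the paper: start from Lemma \ref{lem: properties of curves}(5) to get two opposite \I-joints of $c$ in one saddle, observe that at least one of them is part of a wiggle (so $\bbl{c}\ge 3$), and then identify or eliminate each row of Table \ref{tab: classification table} using primeness, twist-reducedness, the $3$-highly-twisted hypothesis and the structure of the $\calK$-limbs, which is exactly the paper's argument. The only caveats are that your case of two turns at the same bubble is vacuous (each opposite \I-joint has one endpoint in a bigon region, so one of the two is always part of a wiggle), and that the row-by-row verification you defer as ``fiddly'' is where the actual content of the paper's proof lies.
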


\begin{proof}
Let $c$ meet the bubble $B_1$ more than once. By Lemma \ref{lem: properties of curves}(5) 
it can do so only in two opposite \I-joints, $\alpha,\beta$. Therefore, at least one of those 
\I-joints, say $\alpha$, is part of a wiggle  $\alpha'$ of $c$. Hence, $\alpha'$ meets 
an adjacent bubble $B_2$. Note that $c \ssm (\alpha'\cup\beta)$ consists of two arcs 
connecting $\alpha'$ and $\beta$ as depicted in Figure \ref{fig:meets bubble twice}. Let $\gamma_R$, 
$\gamma_L$ be the dotted subarcs of $c$ on the right and left of the figure respectively. The 
argument  is divided into cases according to Table \ref{tab: classification table}:

\begin{figure}[H]
    \centering
    \includegraphics{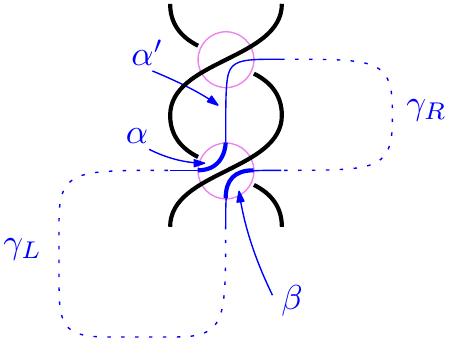}
    \caption{A curve $c$ that meets a bubble twice.}
    \label{fig:meets bubble twice}
\end{figure}

\vskip5pt

\begin{enumerate}
    \item The curve $c$ contains three \I-joints, hence it is not in $C_{2,0}$, $C_{2,2}$ 
    or $C_{0,4}$.
    
    \vskip5pt
    
    \item If $c\in\calC_{4,0}$, then the subarc $\gamma_R$ of $c$ has no joints while 
    $\gamma_L$ has one \I-joint. Hence $c$ is good of type (I), (II) or (III).
    
    \vskip5pt
        
    \item Since the three \I-joints of $c$ in $T$ are not part of the same limb in 
    $\calK_{3,0}$ nor $\calK_{4,0}$, then $c$ cannot be in $\calK_{3,0}+0,2$  or 
    in $\calK_{4,0}+0,2$ (See Table \ref{tab: classification table}).
    
    \vskip5pt
    
    \item The curve $c$ cannot be in $\calK_{4,0}+\calK_{4,0}$: Otherwise $\beta$ 
    is part of a wiggle $\beta'$ of $c$. The wiggle $\alpha'$ (resp. $\beta'$) is part 
    of a limb $\alpha'' \in\calK_{4,0}$ (resp. $\beta'' \in \calK_{4,0})$. Since each limb 
    of $\calK_{4,0}$ has a core, the bubble $B_2$ must be extremal. Then, the subarc $\gamma_R$ 
    of $c$ contains the other wiggle of $\alpha''$. The closed curve 
    which is the union of $\gamma_R$ and an arc on the boundary of the twist region intersects 
    the link diagram twice, and both sub-diagrams bounded by it are non-trivial. This contradicts 
    the assumption that the  diagram is prime.
    
     \vskip5pt
     
     \item A similar argument shows that $c$ cannot be in $\calK_{2,1}$+$\calK_{2,1}$. 
     
     \vskip5pt
     
     \item The curve $c$ cannot be in $\calK_{2,1} + {1,1}$: Otherwise, $\alpha'$ 
     is the wiggle of some $\alpha''\in \calK_{2,1}$ and $\beta$ is a turn. Beside 
     $\alpha'$ and $\beta$,  $c$ has a \D-bone on the subarc $\gamma_L$, 
     and no joints on $\gamma_R$.  Since $\alpha'' \in \calK_{2,1}$ it abuts some $c_0\in\calC_{1,2}$. 
     Hence, $c\cap L$ and $c_0 \cap L$ share endpoints. It follows that the union 
     $(c\cap L) \cup (c_0 \cap L)$ is a component of $L$ passing over 
    at most two wiggles of the diagram (at $c\cap L$) and under at most two wiggles 
     (at $c_0 \cap L$). This contradicts the assumption that $L$ is 3-highly twisted.
     
     \vskip5pt
     
     \item If the curve $c$ is in $\calK_{4,0}+{2,0}$ then $c$ is good of type (V):
     If $\beta$ is part of a wiggle $\beta'$ of $c$, then at most one of $\alpha'$ and 
     $\beta'$ is part of limb in $\calK_{4,0}$. Without loss of generality, assume $\alpha'$ 
     is a wiggle of a limb $\alpha''\in\calK_{4,0}$. Then, $B_2$ is extremal, and the subarc 
     $\gamma_R$ contains the other wiggle of $\alpha''$. The curve which is the union of 
     $\gamma_R$ and an arc on the boundary of the twist region intersects the link diagram 
     twice, in contradiction to the the assumption that the  diagram is prime.  If $\beta$ 
     is a turn, then $B_1$ is extremal, and the wiggle $\alpha'$ is part of a limb 
     $\alpha''\in\calK_{4,0}$. This limb abuts a curve $c_0\in\calC_{2,0}$, and it follows 
     that $c$ is good of type (V).
     
     \vskip5pt
     
     \item Finally, if the curve $c$ is in $\calK_{3,0}+\calK_{3,0}$ then $c$ 
     is good of type (IV):  The wiggle $\alpha'$ is a wiggle of some limb 
     $\alpha''\in\calK_{3,0}$. If $\beta$ is part of a wiggle $\beta'$ of $c$, then 
     $\beta'$ is a wiggle of some other limb $\beta''\in \calK_{3,0}$. It follows that 
     each of $\gamma_R,\gamma_L$ contains exactly one \I-joint, which is impossible.
     If $\beta$ is a turn, then it is the turn of some limb $\beta''$ in $\calK_{3,0}$.
     As the core of $\beta''$ meets the width of the twist region, its wiggle must be 
     on $\gamma_L$. Similarly, the turn of $\alpha''$ must be on $\gamma_L$ as well. 
     This implies that $\gamma_R$ does not contain any joints. If the turn of $\alpha''$ 
     and the wiggle of $\beta''$ are in two different twist regions then the curve 
     abutting (both of) their cores contains three turns. However, this curve is a 
     non-terminal layer curve (in the sense of Lemma \ref{lem: onion}), and those 
     contain at most two turns. Thus, the turn of $\alpha''$ and the wiggle of 
     $\beta''$ are in the same twist region $T'$.  Each of $\alpha''$ and $\beta''$ 
     meets an extremal bubble of $T'$. Then only option for them to close up is if 
     they meet the same extremal bubble of $T'$. It follows that $c$ is good of type (IV).
    
\end{enumerate}
\end{proof}

\begin{proposition}\label{prop: all curves are good or C04}
All curves in $\calC$ are good or in $\calC_{0,4}$.
\end{proposition}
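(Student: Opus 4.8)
The plan is to show that any curve $c\in\calC$ that is not in $\calC_{0,4}$ must be good, by a case analysis driven by Table~\ref{tab: classification table} (recall that since $\chi(S)=0$, every curve satisfies $\chi'(c)=0$, so it appears in that table). The curves in $\calC_{2,0}\cup\calC_{1,2}$ are good by definition, so the work is to handle the remaining rows: $\calC_{4,0}$, $\calC_{2,2}$, and the seven ``composite'' rows involving limbs of $\calK$. First I would dispose of any curve that passes through a bubble more than once: by Lemma~\ref{lem: returning is good} such a curve is automatically good, so from now on we may assume each curve meets every bubble at most once, i.e., every \I-joint of $c$ belongs to a turn or a wiggle and these occur in distinct bubbles.

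Next I would organize the argument around the structure of \I-joints. A curve with $\bbb{c}=4$ and all \I-joints (i.e.\ $c\in\calC_{4,0}$) decomposes, by Lemma~\ref{lem: two turns} applied repeatedly, into some number of turns and wiggles; since $c$ cannot have three consecutive turns and cannot bound a twist-reduction subdiagram, the only surviving possibilities are ``two turns and one wiggle'' (where the wiggle must be extremal, or $c$ would give a twist-reduction subdiagram, forcing it to be a rib — good of type ?, or one of the figures in Figure~\ref{fig: good curves}) or ``two wiggles'' (which must be extremal for the same reason). For $c\in\calC_{2,2}$ one has two \I-joints and a \D-bone; Lemma~\ref{lem: properties of curves}(6) controls where the \D-bone can sit, and again primality plus twist-reducedness pins $c$ down to one of the good configurations. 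The composite rows are handled by noting that a limb $\kappa\in\calK$ inside $c$ comes with a core and (via Lemma~\ref{lem: onion}, Remark~\ref{rem: core retracing}) a well-understood abutting curve in $\calC_{\pos}$; tracing the core and using that the diagram is $3$-highly twisted, prime, and twist-reduced forces each such $c$ either into a good form or into a contradiction — in particular, Claim~\ref{claim: special triple cases} rules out the ``triple'' configurations that would otherwise arise, and Lemma~\ref{lem: two turns} rules out three consecutive turns.

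The main obstacle I anticipate is the bookkeeping in the composite rows — especially $\calK_{4,0}+2,0$, $\calK_{3,0}+\calK_{3,0}$, and $\calK_{2,1}+\calK_{2,1}$ — where one must argue that the two ends of the curve (beyond the limb(s) in $\calK$) have nowhere to go except to close up in a good pattern. The tool here is exactly the one used in the proof of Lemma~\ref{lem: returning is good}: form the closed curve consisting of a stray subarc together with an arc on $\partial T$, observe it meets the link diagram at most twice, and invoke primality; or push the analysis to an abutting curve and invoke that non-terminal layer curves (Lemma~\ref{lem: onion}) have at most two turns. So concretely: (1) reduce to curves meeting each bubble once via Lemma~\ref{lem: returning is good}; (2) enumerate rows of Table~\ref{tab: classification table}; (3) for each row, use Lemmas~\ref{lem: properties of curves}, \ref{lem: two turns}, \ref{lem: onion}, Claim~\ref{claim: special triple cases}, and the primality/twist-reduced/$3$-highly-twisted hypotheses to conclude $c$ is good or $c\in\calC_{0,4}$. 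The case $\calC_{0,4}$ (two \D-bones, no \I-joints) genuinely cannot be forced to be good, which is why it is singled out in the statement — it will be dealt with separately in the sequel.
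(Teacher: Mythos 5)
Your toolkit is the right one (Table~\ref{tab: classification table}, Lemma~\ref{lem: returning is good}, Lemma~\ref{lem: two turns}, Claim~\ref{claim: special triple cases}, primality and twist-reducedness), but the proposal is missing the central organizing idea that makes the case analysis close: the paper argues by contradiction on an \emph{innermost} bad curve $c$ not in $\calC_{0,4}$, bounding a disk $D$ in $P^+$. Innermostness guarantees that every curve contained in $D$ is good, and this is what powers every step: when $c$ turns or wiggles at a bubble, one looks at the curve $c'$ whose \I-joint is \emph{opposite} to that of $c$ in the relevant (extremal or adjacent) bubble; $c'$ lies in $D$, hence is good, hence has one of finitely many shapes, and chasing the abutting curves then forces $c$ either to pass through a bubble twice (so $c$ is good by Lemma~\ref{lem: returning is good}, contradiction) or to produce a twist-reduction subdiagram or a configuration excluded by Claim~\ref{claim: special triple cases}. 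The paper's proof is structured as: the innermost bad curve has no turn or wiggle at a twist region with a bubble inside $D$; then it has no wiggle at all; then it is not in $\calC_{4,0}$ (only turns is impossible by Lemma~\ref{lem: two turns}) and not in $\calC_{2,2}$; and finally any remaining row of the table contains a limb of $\calK$, which forces a wiggle --- contradiction.

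Without the innermost reduction, your row-by-row analysis does not go through. The concrete failure point is already visible in your own hedging on $\calC_{4,0}$: a rib (two turns plus an extremal wiggle) that meets each bubble exactly once is \emph{not} one of the good types of Definition~\ref{def: good} and is not excluded by Lemma~\ref{lem: two turns}, primality, or twist-reducedness alone; the same problem recurs for the composite rows such as $\calK_{3,0}+0,2$ and $\calK_{4,0}+2,0$. The information needed to kill these cases lives in the \emph{neighboring} curves (opposite \I-joints in saddles, abutting curves along cores), and you can only assert those neighbors are good if you have chosen $c$ innermost. Your step (1) (``assume each curve meets every bubble at most once'') controls only the single curve under consideration, not its neighbors, so step (3) cannot be completed as outlined. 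To repair the proof you should restate it as: let $c$ be an innermost bad curve in $P^+$ not in $\calC_{0,4}$, and then run your case analysis using the goodness of all curves in the disk it bounds.
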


In the proof of the proposition, we will assume in contradiction that such a curve exists. 
The proof will follow from the next four lemmas.

\begin{lemma}\label{lem: turns are external}
Assume that there are bad curves which are not in $\calC_{0,4}$. Let $c$ be an 
innermost bad curve in $P^+$ which is not in $\calC_{0,4}$. Let $D$ be the 
disk bounded by $c$. Then the curve $c$ does not turn or wiggle through a twist 
region $T$ which has a bubble contained  in $D$.
\end{lemma}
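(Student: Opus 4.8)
The plan is to argue by contradiction using an innermost-disk argument combined with the classification in Table~\ref{tab: classification table}. Suppose $c$ is an innermost bad curve in $P^+$ which is not in $\calC_{0,4}$, bounding a disk $D$ in $P^+$, and suppose toward a contradiction that $c$ turns or wiggles through a twist region $T$ which has a bubble $B$ contained in $D$. The key point is that ``innermost'' means every curve of $\calC^+$ strictly inside $D$ is good or in $\calC_{0,4}$, while a curve of $\calC^-$ meeting the interior of $D$ is abutting $c$ or one of these inner curves. Since $B\subset D$, the saddle in the ball bounded by $B$ forces curves of $\calC^-$ to pass through $B$; following these abutting curves and using that they must be good (being inside, or determined by good curves), I will be able to trace out a closed curve in the diagram that bounds a subdiagram with too few crossings, contradicting primeness or twist-reducedness.

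**Key steps.** First I would set up the local picture at $B$: since $c$ turns or wiggles through $T$, it has an \I-joint (or two) in a bubble of $T$, and by Lemma~\ref{lem: properties of curves}(5) any curve meeting $B$ more than once does so in opposite \I-joints. The saddle in the ball bounded by $B$ has four \I-joints in its boundary; the \I-joints of $c$ at $B$ are paired (via the saddle) with \I-joints of some curve $c'\in\calC^-$. Since $c$ is innermost in $P^+$ and $B\subset D$, this $c'$ has a limb entering the interior of $D$; by Lemma~\ref{lem: returning is good} combined with the classification of $\chi'=0$ curves, any curve of $\calC^-$ whose relevant part lies inside $D$ is good or in $\calC_{0,4}$ — and a curve of $\calC_{0,4}$ has no \I-joints, so $c'$ is good. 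Second, I would run through the five types (I)--(V) of good curves in Figure~\ref{fig: good curves}, together with $\calC_{2,0}$, as the possibilities for $c'$, and in each case examine how $c'$ can close up given that it passes through $B$ and that one side of it is ``trapped'' inside $D$. The geometry of good curves (each turn meeting a width edge, each wiggle being extremal and having a core) severely constrains this. Third, in each case I expect to extract an arc of $c'$ (or of a further abutting curve, as in Claim~\ref{claim: special triple cases}) which, together with an arc on the boundary of a twist region, forms a closed curve in the diagram hitting $D(L)$ in two points with nontrivial subdiagrams on both sides — contradicting primeness — or forms a twist-reduction subdiagram — contradicting twist-reducedness.

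**Main obstacle.** The hard part will be the bookkeeping in the case analysis: one must carefully distinguish whether $c$ \emph{turns} versus \emph{wiggles} through $T$, whether $B$ is extremal or non-extremal in $T$ (Claim~\ref{claim: special triple cases} is the tool for the non-extremal case), and whether the abutting curve $c'$ re-enters $D$ or escapes, since only the ``trapped'' portion is controlled by innermostness. A secondary subtlety is that $c'$ being good does not immediately produce a forbidden subdiagram — one may need to follow a second abutting curve $c''$ (as in the proof of Claim~\ref{claim: special triple cases}, where $c_2$ and then $c_3$ are examined), again invoking innermostness and Table~\ref{tab: classification table} to pin down $c''$, before the contradicting closed curve materializes. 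I would organize the proof so that the recurring sub-argument ``an abutting curve inside $D$ is good, hence closes up in one of finitely many ways, each yielding a forbidden subdiagram'' is stated once and applied uniformly, rather than repeated verbatim in each branch.
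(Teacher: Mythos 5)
Your overall strategy (an innermost-disk argument combined with a case analysis over the good-curve types and abutting curves) is the right flavor, but there is a genuine gap in how you control the auxiliary curves, and it is exactly the point where the paper's proof does something different. You propose to pair the \I-joint of $c$ in the bubble $B$ with \I-joints of a curve $c'\in\calC^-$ via the saddle, and then assert that ``any curve of $\calC^-$ whose relevant part lies inside $D$ is good or in $\calC_{0,4}$.'' This does not follow: the innermost hypothesis concerns bad curves in $P^+$ only, so it gives no control over curves of $\calC^-$, and Lemma \ref{lem: returning is good} only applies once a curve is known to pass through some bubble twice --- which you have not established for your $c'$. The step your proposal is missing is to take the curve $c'$ whose \I-joint is \emph{opposite} (in the sense of Definition \ref{def: joints}(10)) to that of $c$ in $B$: opposite \I-joints belong to curves on the \emph{same} side of $P$, so this $c'$ lies in $\calC^+$, it is contained in $D$ because the adjacent bubble of $T$ is, and hence innermostness genuinely forces $c'$ to be good (it cannot be in $\calC_{0,4}$ since it carries an \I-joint). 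Only after $c'$ is pinned down as good of type (I) does one pass to the abutting curve $\bar c\in\calC^-$, which is then seen to pass through a bubble twice, so that Lemma \ref{lem: returning is good} --- not innermostness --- makes it good.

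A second, smaller divergence is the endgame: you expect to contradict primeness or twist-reducedness, whereas the paper's contradiction is that $c$ itself turns out to be good --- either because the type of $\bar c$ forces $c$ to pass through some bubble twice (Lemma \ref{lem: returning is good}), or because it forces $c$ to turn at an extremal bubble of a second twist region and hence lie in $\calC_{2,0}$. Without the corrected choice of $c'$ described above, the case analysis cannot get started, so the gap is essential rather than cosmetic.
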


\begin{proof}
\underline{Assume $c$ turns at $T$:} Let $B$ be the extreme bubble in a twist region $T$
through which $c$ turns. Since the diagram is 3-highly twisted $T$ contains at least two 
more bubbles let $B'$ be the bubble adjacent to $B$ in $T$. By assumption $B'$ is contained 
in the disk $D$. Consider the curve $c'$ whose \I-joint is opposite to the \I-joint of $c$ 
in $B$. Since $c$ is innermost, the curve $c'$ is good. It must be of good type (I) as in the 
next configuration.

\begin{figure}[H]
\centering
    \includegraphics[width = 0.25\textwidth]{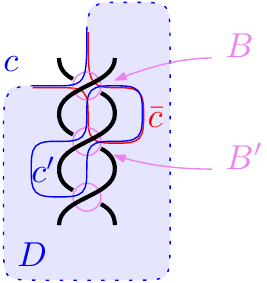}
\end{figure}

The curve $\bar c$ abutting both $c$ and $c'$, passes through the bubble $B'$ twice. 
Hence, by Lemma \ref{lem: returning is good}, $\bar c$ is good.  By the definitions,
it cannot be in $\calC_{2,0}$ nor good of type (I). If $\bar c$ is good of type (II) 
or (III) it has a turn at a bubble $B''$, then $c$ passes twice through $B''$ which 
by Lemma \ref{lem: returning is good} contradicts the assumption that $c$ is bad. 
If $\bar c$ is good of type (IV) or (V) then it meets an extremal bubble $B''$. 
The curve $c$ turns at $B''$ and hence belongs to $C_{2,0}$ which again contradicts 
the assumption that $c$ is bad. 

\vskip 7pt

\underline{Assume $c$ wiggles through $T$:} The curve $c$ wiggles through the bubbles 
$B_1,B_2$. Let $B_0$ be a bubble of $T \cap D$ so that $B_0,B_1,B_2$ are consecutive, 
as in the following figure. 

\begin{figure}[H]
\centering
    \includegraphics[width = 0.27\textwidth]{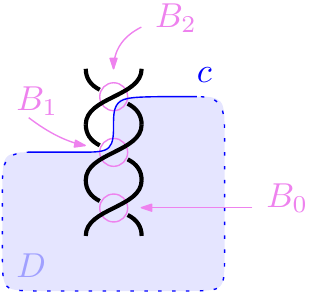}
\end{figure}
Consider the curve $c'$ whose \I-joint is opposite to the \I-joint of $c$ in $B_1$.
The curve $c'$ is contained in
$D$ and wiggles through $T$ passing through the bubbles $B_0,B_1$. 

\begin{figure}[H]
\centering
    \includegraphics[width = 0.23\textwidth]{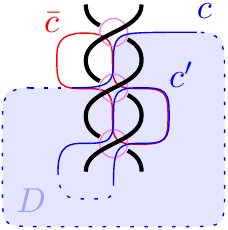}
\end{figure}

By assumption $c'$ must be good, and so it  wiggles through $T$ and then returns to $T$, 
passing through $B_0,B_1,B_0$ in that order. By Lemma \ref{lem: properties of curves}(5), 
the two \I-joints of $c'$ in $B_0$ are opposite sides of the same saddle. Next, consider 
the curve $\bar c$ abutting $c'$ along the two \I-bones of $c'$ connecting $B_0$ and $B_1$. 
The curve $\bar c$ passes through $B_1$ twice. Hence, it is good by Lemma \ref{lem: returning is good}.
It must be of type (I) and in addition passes through $B_2$. It follows that $c$ abuts 
$\bar c$ along the two \I-bones of $\bar c$ connecting $B_1$ and $B_2$. Hence, 
$c$ passes through $B_2$ twice, and by  Lemma \ref{lem: returning is good}
$c$ is good, contradicting the assumption.
\end{proof}

\begin{lemma}\label{lem: innermost does not cross}
Assume that there are bad curves which are not in $\calC_{0,4}$. 
Let $c$ be an innermost bad curve in $P^+$ which is not in $\calC_{0,4}$. Let $D$ 
be the disk bounded by $c$. Then the curve $c$ does not wiggles through a twist region.
\end{lemma}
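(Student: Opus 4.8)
The plan is to argue by contradiction in the same spirit as Lemma \ref{lem: turns are external}: suppose $c$ is an innermost bad curve in $P^+$, not in $\calC_{0,4}$, bounding a disk $D$, and suppose $c$ wiggles through some twist region $T$. By Lemma \ref{lem: turns are external}, no bubble of $T$ lies in $D$, so in particular $T$ enters $D$ only through the two bubbles $B_1,B_2$ of the wiggle, and the extremal configuration forces $B_1,B_2$ to be the first two bubbles of $T$ (or symmetrically the last two), with the rest of $T$ lying outside $D$. First I would set up this picture carefully: the wiggle's two \I-bones together with a short arc on $\partial T$ cut off a subdisk, and on the ``inside'' of $D$ along the wiggle sits the curve $c'$ whose \I-joints in $B_1,B_2$ are opposite to those of $c$.

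Next I would run the abutting-curve analysis. The curve $c'$ lies in $D$, hence is good by innermost-ness, and the only good types that can wiggle through a twist region while staying on the $D$-side are limited — essentially types (IV) and (V) (the rib-layer and $\calC_{2,0}$-type-(ii)-layer configurations), since types (I)--(III) are built from turns only. So $c'$ is good of type (IV) or (V), and in either case its wiggle through $T$ is the wiggle of a limb in $\calK_{3,0}$ or $\calK_{4,0}$, which forces the bubbles it meets to be extremal and forces $c'$ to abut, along the core, a curve in $\calC_{2,0}$. Now consider the curve $\bar c$ abutting $c$ along one of the two \I-bones of $c$'s wiggle that connects $B_1$ to $B_2$: since $c'$ occupies the $D$-side, $\bar c$ is forced to the $(\bbS^3 \setminus D)$-side and must either continue into the body of $T$ (impossible, since $T$ has no bubbles in $D$ but $\bar c$ started adjacent to them on the far side, so it would re-enter $T$ and produce either a prime-violating curve or a twist-reduction subdiagram) or close up in a way that passes through $B_1$ or $B_2$ a second time, making $\bar c$ good by Lemma \ref{lem: returning is good}, of type (I), and passing through a further bubble $B_3$ of $T$ beyond the wiggle. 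Iterating, $c$ then abuts $\bar c$ along the \I-bones connecting $B_2$ and $B_3$, so $c$ passes through $B_2$ (or $B_3$) twice, whence $c$ is good by Lemma \ref{lem: returning is good} — contradicting that $c$ is bad.

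I would organize the write-up as a short case analysis mirroring the last paragraph of the proof of Lemma \ref{lem: turns are external}, but one step longer: the wiggle replaces the turn, so the ``opposite \I-joint'' curve $c'$ itself has to wiggle, and one needs to walk one additional bubble $B_0$ (on the $D$-side, adjacent to $B_1$) into the picture before the abutting curve $\bar c$ gets pinned down. Concretely: (i) produce $c'$ with opposite \I-joints in $B_1$, extend it by Lemma \ref{lem: properties of curves}(5) so that its \I-joints in $B_0$ are opposite sides of one saddle; (ii) take $\bar c$ abutting $c'$ along the pair of \I-bones joining $B_0,B_1$, so $\bar c$ meets $B_1$ twice and is good of type (I) by Lemma \ref{lem: returning is good}, hence continues through $B_2$; (iii) observe $c$ now abuts $\bar c$ along the \I-bones joining $B_1,B_2$, so $c$ meets $B_2$ twice; (iv) conclude $c$ is good by Lemma \ref{lem: returning is good}.

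The main obstacle I anticipate is the bookkeeping in step (ii)--(iii): one must verify that the surface really does force $\bar c$ onto the far side of $D$ and that it cannot escape by closing up early inside the twist region without creating either a non-prime subdiagram or a twist-reduction subdiagram (this is where primeness and the twist-reduced hypothesis get used, exactly as in Claim \ref{claim: special triple cases} and item (4) of Lemma \ref{lem: returning is good}'s proof). I would handle this by invoking the classification in Table \ref{tab: classification table} to limit $\bar c$'s global type, then checking the finitely many closing-up patterns against primeness/twist-reducedness. The rest is the same saddle-stacking and abutting-curve reasoning already used repeatedly, so it should go through routinely once the configuration is set up with a clean figure.
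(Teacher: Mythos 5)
Your proposal starts correctly (the wiggle must be extremal because Lemma \ref{lem: turns are external} puts no bubble of $T$ inside $D$), but the engine of your argument then contradicts this very fact. In steps (i)--(iii) you introduce a bubble $B_0$ of $T$ ``on the $D$-side, adjacent to $B_1$'' and run the saddle/abutting analysis through it; no such bubble exists, precisely because $D$ contains no bubble of $T$. You are replaying the wiggle case from the proof of Lemma \ref{lem: turns are external}, whose whole premise is that such a $B_0$ \emph{does} lie in $D$ --- that is exactly the situation this lemma has already excluded. Relatedly, your curve $c'$ ``whose \I-joints in $B_1,B_2$ are opposite to those of $c$'' is not a single curve: the opposite \I-joint at the extremal bubble exits the twist region at its end, so that curve \emph{turns} there, while the opposite \I-joint at the second bubble heads deeper into $T$, outside $D$, toward the third bubble. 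The innermost good curve to analyze is therefore the curve turning at the extremal bubble, and it is of good type (II) or (III) or in $\calC_{2,0}$ of type (i) or (ii) --- not of type (IV) or (V) as you claim.

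This misidentification hides the actual work of the lemma. Once $c'$ is recognized as a turning curve of one of those four types, one must locate its \emph{other} twist region $T'$ and split into cases according to whether $c$ avoids the extremal bubble $B_0'$ of $T'$, wiggles through it, turns at it, or has $c\cap L$ passing over it. Only the easy cases end with ``$c$ passes through a bubble twice, hence is good by Lemma \ref{lem: returning is good}'' as you predict; the remaining cases require the classification of Table \ref{tab: classification table}, the retracing of $\calK_{3,0}$-limbs via Lemma \ref{lem: onion}, Claim \ref{claim: special triple cases}, and primeness/twist-reducedness to produce forbidden subdiagrams. None of this is reachable from your setup, so the proposal as written does not prove the lemma.
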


\begin{proof}
Assume that $c$ wiggles through a twist region $T$. By Lemma \ref{lem: turns are external}, 
the disk $D$ does not contain a bubble of $T$. The curve $c$ wiggles extremely through the 
twist region by passing through two bubbles $B_0,B_1$, where $B_0$ is extremal. By 
Lemma \ref{lem: returning is good}, $c$ meets the bubble $B_0$ once. 

The curve $c'$ turning at $B_0$ is good by choice of $c$. Therefore, 
$c'\in\calC_{4,0}$ is good of type (II) or (III) or $c'\in\calC_{2,0}$ of 
type (i) or (ii) (as in Lemma \ref{lem: possible positive curves}):

\begin{figure}[H]
    \includegraphics[width = 0.7 \textwidth]{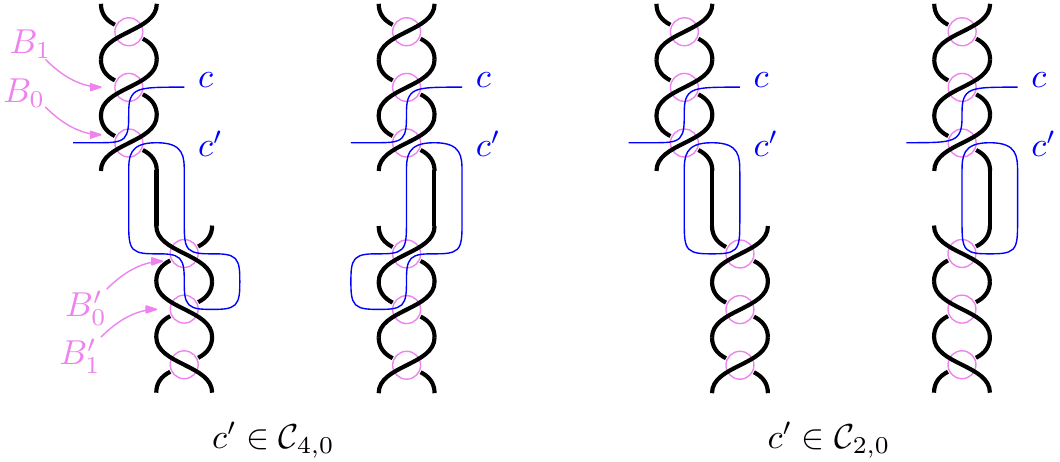}
    \caption{The four configurations of $c'$}\label{fig: configurations of c'}
\end{figure}

Let $T'\ne T$ be the other twist region which $c'$ meets, let $B_0'$ denote 
the extremal bubble in $T'$ through which $c'$ passes, and let $B_1'$ be its 
adjacent bubble.

\vskip7pt

\underline{Case 1.} If $c' \in \calC_{4,0}$, (i.e., as depicted in the left two 
sub-figures in Figure \ref{fig: configurations of c'}), then consider the curve 
$\bar c$ abutting $c$ and $c'$ (shown in orange below). None of the bubbles of 
$\bar c$ belongs to an arc of $\calK$. Therefore, $\bar c \in \calC_{4,0}$, and 
 it follows that it must close up as shown in the dotted curves below:

\begin{figure}[H]
    \includegraphics[]{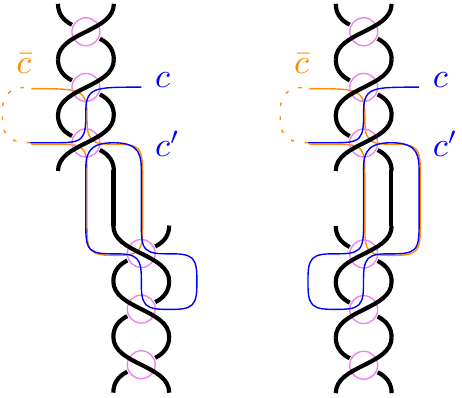}
\end{figure}

As $c$ must follow the dotted bone of $\bar{c}$, we see that $c$ passes through 
the bubbles $B_1$ twice. By Lemma \ref{lem: returning is good}. This contradicts 
 the assumption that $c$ is bad.

\vskip5pt

\underline{Case 2.} Let  $c'\in\calC_{2,0}$, (i.e., as depicted in the right two sub-figures 
of Figure \ref{fig: configurations of c'}) and assume that $c$ \emph{does not} meet $B_0'$. 
Consider, now,  the curve $c''$ whose \I-joint in $B_0'$ is opposite to that of $c'$. 
By the choice  of $c$, the curve $c''$ is good. It must be good of type (I). And the 
configuration is as depicted here:

\begin{figure}[H]
    \includegraphics[]{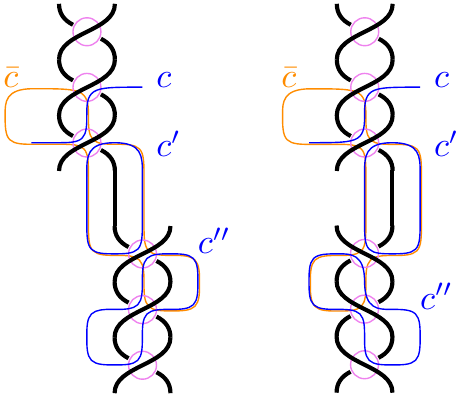}
\end{figure}

Considering the curve $\bar c$ abutting all of $c,c',c''$, we see that it must 
be good by Lemma \ref{lem: returning is good} of type (IV) and (V) respectively. 
As in Case 1, it follows that $c$ meets $B_1$ twice contradicting the assumption that $c$ is bad.

\vskip5pt

\underline{Case 3.} Assume that  $c'\in\calC_{2,0}$ of type (i) and $c$ \emph{does meet} 
$B_0'$ (as in the third, counted from the left, sub-figure of  Figure \ref{fig: configurations of c'}). 
In this situation are three sub-cases to consider: 

\begin{enumerate}
    \item $c$ wiggles through $T'$ at the bubbles $B_0'$ and $B_1'$.
    
 \vskip5pt  
 
 \item $c$ turns at $B_0'$.
 
  \vskip5pt  
 
 \item $c\cap L$ meets the bubble $B_0'$.
    
\end{enumerate}
\vskip7pt
\underline{Sub-case 3.1.}  The curve $c$ wiggles through $T'$ and through the bubbles $B_0'$ 
and $B_1'$.  After passing through $B_1$, the curve $c$ must exit $T$ at its right length 
edge and enter $T'$ on its right length edge before meeting $B_0'$. Otherwise the curve $c$ 
would be forced to pass through a bubble twice in contradiction to Lemma \ref{lem: returning is good}. 
Thus, the curve $c$ is as in the following figure:

\begin{figure}[H]
    \includegraphics{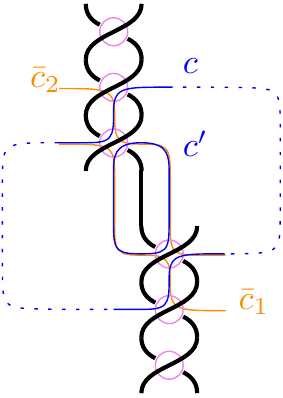}
\end{figure}

If $c\in\calC_{4,0}$ then we get a twist-reduction subdiagram, in contradiction to the assumption. 
Thus, by Lemma \ref{lem: classification of chi'=0}, at least one  of the wiggles of $c$, via 
$B_0,B_1$ or via $B_0',B_1'$,  must be part of an arc $\kappa\in \calK$. Since the curves
$\bar{c}_1,\bar{c}_2$ abutting $c'$ are not in $C_{2,0}$ nor in $\calC_{1,2}$ it is clear that 
$\kappa\notin \calK_{4,0}\cup\calK_{2,1}$. Hence $\kappa\in\calK_{3,0}$. Since $c$ has two wiggles, 
it must be in $\calK_{3,0} + \calK_{3,0}$ (as in Table \ref{tab: classification table}), and 
each of the dotted subarcs (in the figure above) must contain a turn. 

Let $\kappa\in\calK_{3,0}$ be the limb that wiggles through $B_0',B_1'$. Then, $c$ is the terminal 
curve of some vertebra $\tau$ in a curve in $\calC_{2,0}$ (in the sense of Lemma \ref{lem: onion}). 
By retracing backwards, we see that the sequence of curves terminating in $c$ starts with 
$c'\in\calC_{2,0}$, then produces $\bar c_2\in\calC_{4,0}$, and then finally  produce the curve $c$ 
(and the limb $\kappa$). In particular, $\bar c_2$ is a rib, and the \I-bone connecting its 
two turns is a vertebra. The following figure shows an example of such a configuration of curves. 
As one can see, there are subarcs of $\bar c_2$ and $c$ that together bound a twist-reduction 
subdiagram which is a contradiction. 

\begin{figure}[H]
    \includegraphics[]{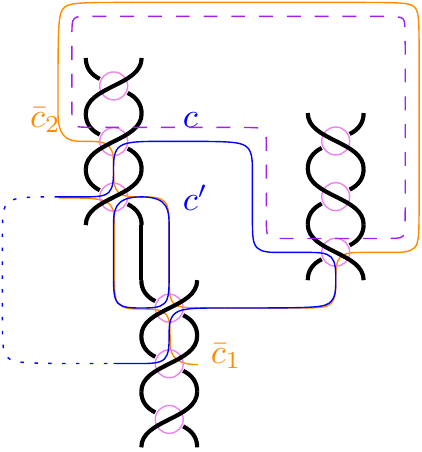}
\end{figure}

\vskip7pt

\underline{Sub-case 3.2} Assume $c$ turns at the bubble $B_0'$. As explained at the beginning 
of the previous sub-case 3.1, the curve $c$ must be as in the following figure:

\begin{figure}[H]
    \centering
    \includegraphics{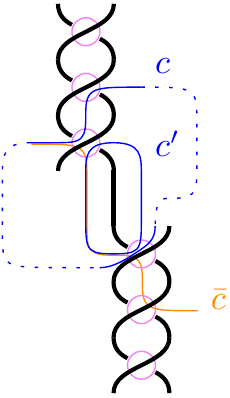}
    \caption{The curve $c'$ is of type (i) and $c$ turns at $B_0'$.}
    \label{fig: subcase 3.2}
\end{figure}

Let $\bar c$ be the curve abutting $c$' as in the figure above. The argument proceeds by 
dividing into cases according to Table \ref{tab: classification table}:

\vskip5pt

\begin{enumerate}
    \item The curve $c$ contains three \I-joints, hence it is not in $C_{2,0}$, $C_{2,2}$ 
    or $C_{0,4}$.
    
    \vskip5pt
    
    \item The curve $c$ is \emph{not} in $\calC_{4,0}$: Otherwise, $c$ has to be a rib. The bone 
    $\tau$ connecting the two turns cannot meet both width edges of the corresponding twist 
    regions by Claim \ref{claim: special triple cases}. Hence it is a vertebra, e.g., as 
    in the following figure:
    
    \begin{figure}[H]
        \centering
        \includegraphics{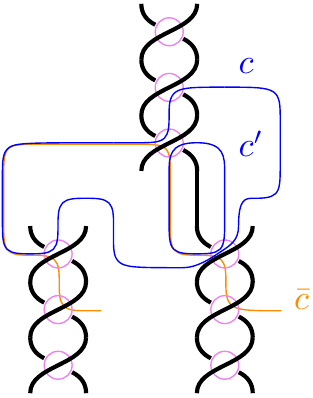}
    \end{figure}
    
    As $\bar c$ has at least five \I-joints, by Table \ref{tab: classification table} 
    $\bar c$ is in $\calK_{3,0}+\calK_{3,0}$,
    and its two cores are those of the limbs in $\calK_{3,0}$.
    The core of $c$ ending in $B_0$ meets the length edge of both twist regions it connects. 
    This is impossible for a core of a limb in $\calK_{3,0}$.
    
    \vskip5pt
        
    \item The curve $c$ cannot be in $\calK_{4,0}+0,2$, $\calK_{2,1}$+$\calK_{2,1}$, 
    or $\calK_{4,0}+\calK_{4,0}$ (See Table \ref{tab: classification table}) since $c$ 
    contains a turn. 
    
    \vskip5pt
    \item The curve $c$ cannot be in $\calK_{3,0}+0,2$: Otherwise, the three \I-joints of $c$ 
    are part of the same limb in $\calK_{3,0}$. The core of such a limb connects a width edge to 
    a length edge. This is not the case here.
    
    \vskip5pt
    
     \item The curve $c$ cannot be in $\calK_{2,1} + {1,1}$ or in $\calK_{4,0}+{2,0}$: 
     Otherwise, the wiggle of $c$ is part of a limb in $\calK_{2,1}$ or $\calK_{4,0}$ 
     respectively. It follows that the curve $\bar{c}$ abutting $c$ and $c'$ in  
     Figure \ref{fig: subcase 3.2} must be in $\calC_{1,2}$ or $\calC_{2,0}$ respectively, 
     which is clearly not the case.
     
     \vskip5pt
     
     \item Finally, the curve $c$ cannot be in $\calK_{3,0}+\calK_{3,0}$: Otherwise, it 
     follows that $\gamma_R$ must contain a single wiggle. However, in this case one can 
     close $\gamma_R$ with an arc along $c'$ to obtain a curve in $P$ intersecting the 
     diagram in two points contradicting the assumption that the diagram is prime.
    
\end{enumerate}

\vskip7pt

\underline{Sub-case 3.3} If $c\cap L$ meets the bubble $B_0'$, then it is as depicted below:

\begin{figure}[H]
    \includegraphics[]{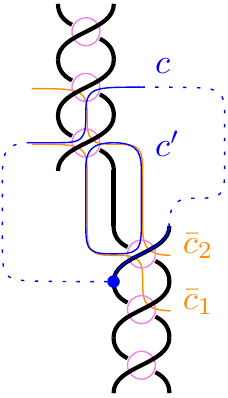}
\end{figure}

If $c\in\calC_{2,2}$ then the left dotted line passes through no bubbles or intersection points,  
and we get a contradiction to the parity. Therefore, by Lemma \ref{lem: classification of chi'=0}, 
$c$ must contain some arc $\kappa\in \calK$. Clearly, $\kappa$ must contain the 
subarc of $c$ wiggling through $T$ via $B_0,B_1$. 
The curve $\bar c_1$ is not in $\calC_{1,2}\cup \calC_{2,0}$ and therefore 
$\kappa\notin \calK_{2,1}\cup \calK_{4,0}$. If follows $\kappa\in \calK_{3,0}$ and is 
the result of the process $c'$, then $\bar c_1$, then $\kappa$, which is discussed in the
proof of Lemma \ref{lem: onion}.  If this is the case then, as in the end of sub-case 3.1, 
a subarc of $\bar c_1$ and a subarc of $c$ bound a twist-reduction subdiagram as in the 
example shown in the following figure:

\begin{figure}[H]
    \includegraphics[width = 0.2 \textwidth]{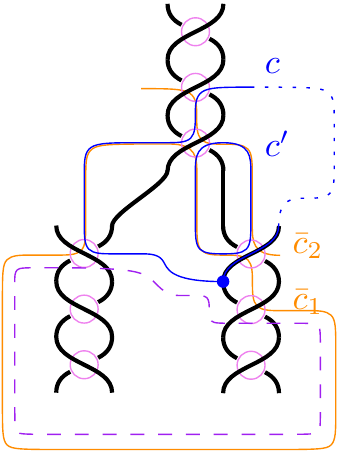}
\end{figure}

\vskip7pt

\underline{Case 4.} Assume that  $c'\in\calC_{2,0}$ of type (ii) 
 and $c$ \emph{does meet} $B_0'$. (as depicted in the rightmost sub-figure of 
 Figure \ref{fig: configurations of c'}). Let $\bar c_1$ be the curve abutting 
 $c$ as in the following figure.

\begin{figure}[H]
    \centering
    \includegraphics{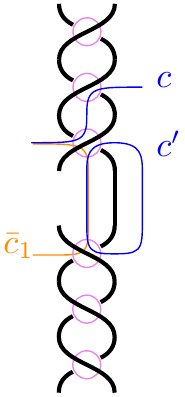}
\end{figure}

The wiggle of $c$ passing via $B_0,B_1$ is not part of any arc $\kappa$ in $\calK$: 
If it were, then the curve $\bar c_1$ will either be in $\calC_{2,0}\cup \calC_{1,2}$ 
or will be the non-terminal layer curve of a process terminating in $\kappa$. Clearly, 
$\bar c_1$ is not in $\calC_{2,0} \cup \calC_{1,2}$. It is also not a non-terminal 
layer of a process defining $\calK_{3,0}$ as in Lemma \ref{lem: onion}, 
since at any step of a process every curve is a rib, i.e., it has two turns and a wiggle, 
and the next step of the process abuts the \I-bone connecting its the two turns, 
however here $c$ does not abut the \I-bone of $\bar c_1$ connecting its two turns.

As in the previous sub-case there are three further sub-sub-cases to consider: 
\begin{enumerate}
    \item $c$ wiggles through $T'$ at the bubbles $B_0'$ and $B_1'$.
    
 \vskip5pt  
 
 \item $c$ turns at $B_0'$.
 
  \vskip5pt  
 
 \item $c\cap L$ meets the bubble $B_0'$.
    
\end{enumerate}

\vskip7pt

\underline{Sub-case 4.1} If $c$ wiggles through the bubbles $B_0',B_1'$, the 
exact same argument as above shows  that the subarc of $c$ passing through 
$B_0',B_1'$ is not a subarc of any arc in $\calK$.  It follows that 
$c\in\calC_{4,0}$ and bounds a twist-reduction subdiagram which is a contradiction.

\begin{figure}[H]
    \includegraphics{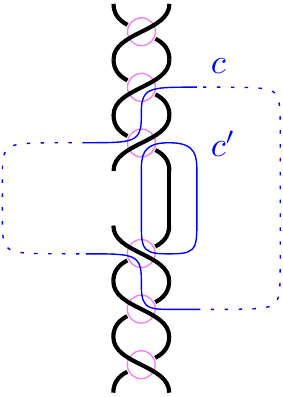}
\end{figure}

\vskip7pt

\underline{Sub-case 4.2} If $c$ turns at the bubble $B_0'$. 
The curve $c$ must be as in the following figure:
\begin{figure}[H]
    \centering
    \includegraphics{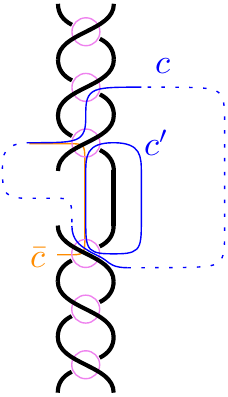}
    \caption{The curve $c'$ is of type (ii) and $c$ turns at $B_0'$}
    \label{fig: case 4.2}
\end{figure}
The argument is further divided into cases according to Table \ref{tab: classification table}:

\vskip5pt

\begin{enumerate}
    \item The curve $c$ contains three \I-joints, hence it is not in $C_{2,0}$, $C_{2,2}$ 
    or $C_{0,4}$.
    
    \vskip5pt
    
    \item The curve $c$ is not in $\calC_{4,0}$: Otherwise, $c$ has to be a rib. The 
    bone $\tau$ connecting the two turns cannot meet both width edges of the corresponding 
    twist regions by Claim \ref{claim: special triple cases}. Hence it is a vertebra, as in 
    the following figure. 
    
    \begin{figure}[H]
        \centering
        \includegraphics{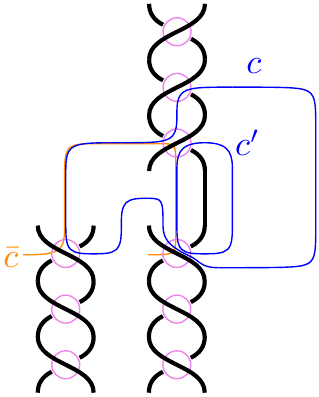}
    \end{figure}
    
    It follows that the curve $\bar c$ abutting $c$ and $c'$ has at least three consecutive 
    turns in contradiction to Lemma \ref{lem: two turns}.
    
    \vskip5pt
        
    \item The curve $c$ cannot be in $\calK_{4,0}+0,2$, $\calK_{2,1}$+$\calK_{2,1}$ 
    or $\calK_{4,0}+\calK_{4,0}$ (See Table \ref{tab: classification table}) since $c$ 
    contains a turn. 
    
    \vskip5pt
    \item The curve $c$ cannot be in $\calK_{3,0}+0,2$: Otherwise the three \I-joints of $c$ 
    are part of the same limb in $\calK_{3,0}$. However this would imply that the three \I-joints 
    of $c$ are consecutive, which is impossible by the checkerboard coloring of the diagram.
    
    \vskip5pt
    
     \item The curve $c$ cannot be in $\calK_{2,1} + {1,1}$ or in $\calK_{4,0}+{2,0}$: 
     Otherwise, the wiggle of $c$ is part of a limb in $\calK_{2,1}$ or $\calK_{4,0}$ 
     respectively. It follows that the curve $\bar{c}$ abutting $c$ and $c'$ in  
     Figure \ref{fig: case 4.2} must be in $\calC_{1,2}$ or $\calC_{2,0}$ respectively, 
     which is clearly not the case.
     
     \vskip5pt
     
     \item Finally, the curve $c$ cannot be in $\calK_{3,0}+\calK_{3,0}$: Otherwise, 
     it follows that the curve $\bar c$ abutting $c$ has three consecutive turns. This 
     is impossible by Lemma \ref{lem: two turns}.
    
\end{enumerate}

\vskip7pt

\underline{Sub-case 4.3} If $c\cap L$ meets the bubble $B_0'$, then by 
Table \ref{tab: classification table} $c$ is either in $\calC_{2,2}$, 
or $\calK_{2,1}+1,1$ or $\calK_{2,1}+\calK_{2,1}$. It cannot be in
$\calK_{2,1}+1,1$ or $\calK_{2,1}+\calK_{2,1}$ since otherwise the 
dotted subarc $\gamma_R$ has a turn or a wiggle respectively, which would 
contradict primeness. Thus, $c\in\calC_{2,2}$.

\begin{figure}[H]
    \includegraphics{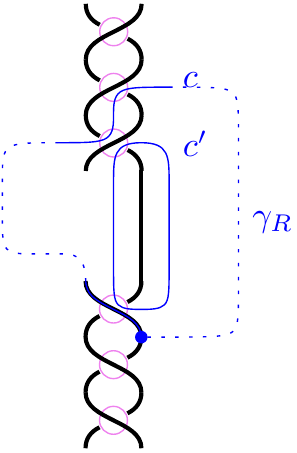}
\end{figure}

If $c\cap L$ passes over one crossing of $L$, then $c$ bounds a twist-reduction 
subdiagram as in the following figure:

\begin{figure}[H]
    \includegraphics{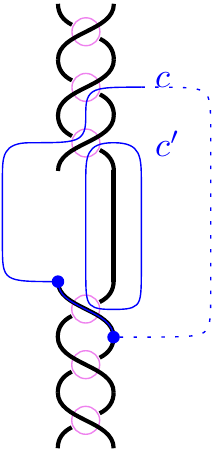}
\end{figure}

Otherwise, we are in the following configuration:

\begin{figure}[H]
    \includegraphics{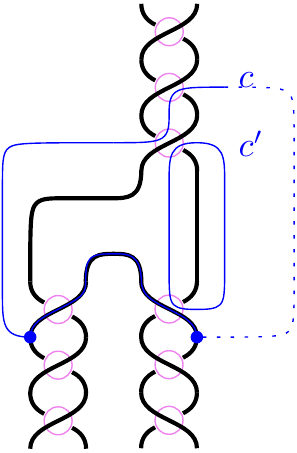}
\end{figure}

Now consider how the curve $\bar c_1$ abutting $c'$ on its left can close up. 
It must be as depicted here:

\begin{figure}[H]
    \includegraphics{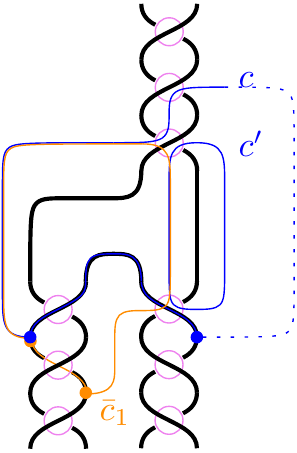}
\end{figure}

Thus, $\bar c_1$ must be as one of the configurations that were ruled out in 
Claim \ref{claim: special triple cases}.
\end{proof}

\begin{lemma}
Assume  that there are bad curves which are not in $\calC_{0,4}$. 
Let $c$ be an innermost bad curve in $P^+$ which is not in $\calC_{0,4}$. Let $D$ 
be the disk bounded by $c$. Then $c\notin \calC_{4,0}$.
\end{lemma}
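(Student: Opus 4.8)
The plan is to assume, for contradiction, that $c\in\calC_{4,0}$ and to pin down the very restricted combinatorial shape such a curve can have. The point worth keeping in mind is that ribs (elements of $\calC_{4,0}$ consisting of two turns and a wiggle) are themselves bad, so the hypothesis that $c$ is the \emph{innermost} bad curve is what does the work here: it lets us apply Lemma~\ref{lem: innermost does not cross}.

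First I would reduce to the case that $c$ is a cycle of four turns. Since $S$ is taut and $\chi(S)=0$, Lemma~\ref{lem: classification of chi'=0} --- equivalently, the $\calC_{4,0}$ row of Table~\ref{tab: classification table} --- says that a curve in $\calC_{4,0}$ consists either of four turns, or of one wiggle and two turns (a rib), or of two wiggles. Lemma~\ref{lem: innermost does not cross} tells us that $c$ does not wiggle through any twist region, which eliminates the last two options. Hence $c$ consists of four turns $t_1,t_2,t_3,t_4$, joined cyclically by its four \I-bones (there are no \D-joints since $\bdr{c}=0$, so these are the only joints and bones of $c$).

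Next I would invoke Lemma~\ref{lem: two turns}. Because $\bbb{c}=4$ we have $c\notin\calC_{2,0}$, and $c$ is not a rib since it has no wiggle; hence that lemma applies to each \I-bone of $c$ joining two of its turns. Its concluding assertion forbids three consecutive turns --- but a closed curve built from exactly four turns plainly has three consecutive turns, say $t_1,t_2,t_3$ (between two successive turns along $c$ there is a single \I-bone and nothing else). This contradiction completes the proof that $c\notin\calC_{4,0}$.

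The main thing to get right is the first reduction --- confirming that the ``four turns'' entry of Table~\ref{tab: classification table} really is exhaustive for $c$, and that a four-turn closed curve literally realizes ``three consecutive turns'' in the sense used in Lemma~\ref{lem: two turns}. If a more hands-on argument is wanted instead of the appeal to that clause, one can argue directly: by Lemma~\ref{lem: turns are external} every bubble of a twist region through which $c$ turns, other than the one carrying the turn, lies outside the interior of $D$, so (using the hypothesis that the diagram is $3$-highly twisted) each turn is forced to sit at an extremal bubble with $c$ entering that twist region only along the adjacent width edge; one then runs through the finitely many ways four such ``pushed-out'' turns can close up, using Lemma~\ref{lem: two turns} to control the joining bones, Lemma~\ref{lem: returning is good} to exclude a repeated bubble, and primeness and twist-reducedness of the diagram to produce in each case a twist-reduction subdiagram (or a length-two curve on $P$ bounding a nontrivial subdiagram). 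That route is only bookkeeping; the conceptual content is already in the short argument above.
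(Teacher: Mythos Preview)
Your proof is correct and follows the paper's approach exactly: the paper's proof is simply ``By Lemma~\ref{lem: innermost does not cross}, $c$ only turns. However, this is impossible by Lemma~\ref{lem: two turns},'' and your argument is a faithful unpacking of these two sentences. The final paragraph sketching a hands-on alternative is unnecessary but harmless.
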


\begin{proof}
By lemma \ref{lem: innermost does not cross}, $c$ only turns. However, this is 
impossible by Lemma \ref{lem: two turns}.
\end{proof}

\begin{lemma}
Assume that there are bad curves which are not in $\calC_{0,4}$. Let $c$ be an 
innermost bad curve in $P^+$ which is not in $\calC_{0,4}$, and let $D$ be the 
disk bounded by $c$. Then $c\notin \calC_{2,2}$.
\end{lemma}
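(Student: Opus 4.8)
The plan is to suppose $c\in\calC_{2,2}$ and reach a contradiction by the abutting–curve analysis used in Lemmas~\ref{lem: turns are external} and~\ref{lem: innermost does not cross}. First I would pin down the shape of $c$. By Lemma~\ref{lem: innermost does not cross} the curve $c$ does not wiggle, so neither of its two \I-joints lies on a wiggle. Two \I-joints of $c$ inside a single twist region would have to sit at consecutive bubbles and hence form a wiggle; so the two \I-joints of $c$ lie in distinct twist regions $T_1,T_2$, and each \I-joint is therefore part of a turn, at an extremal bubble $B_1$ of $T_1$ and $B_2$ of $T_2$. Thus $c$ decomposes as a \D-bone $\delta$ (mapped onto $L$), two \I-bones joining the ends of $\delta$ to the two turns, and one \I-bone $\tau$ joining the two turns. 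Since $c\notin\calC_{2,0}$ and $c$ is not a rib, Lemma~\ref{lem: two turns} applied to $\tau$ forces $\tau$ to meet the \emph{width} edge of both $T_1$ and $T_2$; and by Lemma~\ref{lem: turns are external} neither $T_1$ nor $T_2$ has a bubble in the disk $D$ bounded by $c$. In particular the local picture of $c$ at each turn is forced.

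\emph{The abutting argument.} Let $c'\in\calC^+$ be the curve whose \I-joint in $B_1$ is opposite to that of $c$. Since $c'\subset D$ meets $B_1$ we have $c'\notin\calC_{0,4}$, and since $c$ is the innermost bad curve not in $\calC_{0,4}$, the curve $c'$ is good; as $B_1$ is extremal, $c'$ is either in $\calC_{2,0}$ (of type (i) or (ii)) or good of type (I)--(V). For each of these forms I would, exactly as in Lemma~\ref{lem: innermost does not cross}, follow the curves abutting $c$ and $c'$: whenever such a curve is forced to pass through a bubble twice, Lemma~\ref{lem: returning is good} makes $c$ itself pass through a bubble twice and hence good, a contradiction; otherwise one tracks the twist regions traversed until the configuration closes up and produces either a simple closed curve in $P$ meeting $D(L)$ transversely in two points with both complementary subdiagrams nontrivial (against primeness) or a twist-reduction subdiagram (against the hypothesis). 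The new feature relative to Lemma~\ref{lem: innermost does not cross} is the \D-bone $\delta$: wherever $\delta$ or the boundary arcs adjacent to it enter the analysis they are controlled exactly as in sub-cases~3.3 and~4.3 of that lemma and as in step~(6) of the proof of Lemma~\ref{lem: returning is good}, using that $L$ is $3$-highly twisted to bound how $\delta$ can sit on a link component, together with Lemma~\ref{lem: properties of curves}(5),(6). Three consecutive turns are excluded by Lemma~\ref{lem: two turns}, the ``triple'' configurations at non-extremal bubbles by Claim~\ref{claim: special triple cases}, and the curves with $\chi'=0$ are kept in check throughout by Lemma~\ref{lem: classification of chi'=0} and Table~\ref{tab: classification table}.

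\emph{Main difficulty.} As in Lemma~\ref{lem: innermost does not cross}, the obstacle is the bookkeeping: each good form of $c'$, together with whether a twist region other than $T_1$ met by $c'$ coincides with $T_2$ and, if so, how $c$ meets the relevant extremal bubble there (wiggling, turning, or along $\delta$), spawns a sub-case, and in each the abutting curves must be chased far enough to exhibit the forbidden subdiagram. The real work is to organize the argument so that it genuinely reduces to configurations already excluded in Lemma~\ref{lem: innermost does not cross} and Claim~\ref{claim: special triple cases}, and to verify that the presence of $\delta$ never opens a genuinely new configuration. Once this is done the lemma follows, and together with the preceding lemma it completes the proof of Proposition~\ref{prop: all curves are good or C04}.
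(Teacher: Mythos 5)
Your first paragraph is sound and in fact matches the paper's setup, even streamlining part of it: the paper arrives at the same picture by enumerating configurations and then discarding two of them (its cases $(a3),(b3)$) via Lemma~\ref{lem: two turns}, whereas you invoke that lemma up front to force the \I-bone $\tau$ to meet the width edge of both twist regions. The use of Lemma~\ref{lem: innermost does not cross} to rule out wiggles, of Lemma~\ref{lem: turns are external} to place the turns at extremal bubbles with no bubbles of $T_1,T_2$ in $D$, and the observation that a curve in $D$ meeting a bubble cannot lie in $\calC_{0,4}$ and is therefore good, are all correct and are exactly the ingredients the paper uses.

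The gap is in your second paragraph: the contradiction is never actually derived. Once the two turns and $\tau$ are pinned down, everything hinges on the finitely many ways the \D-bone $\delta$ can sit on $L$, and this is where the content of the proof lives. The paper constrains $\delta$ by the checkerboard coloring (its endpoints lie in regions of the same color, and by Lemma~\ref{lem: properties of curves}(6) its two adjacent \I-bones lie in different regions, so $\delta$ passes over at least one crossing in a controlled way), lists the resulting configurations, and kills each one concretely: one by Claim~\ref{claim: special triple cases}, the others because the abutting curve $\bar c$ lands in $\calC_{4,0}\cup\calC_{2,2}$ and bounds a twist-reduction subdiagram. You instead propose a case split on the good type of the opposite curve $c'$ at $B_1$ and then say one should ``follow the abutting curves until a forbidden subdiagram appears,'' explicitly deferring ``the real work'' of checking that $\delta$ opens no new configuration. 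That deferred work \emph{is} the proof of this lemma; without naming the configurations of $\delta$ and exhibiting the specific forbidden curve in each one, the argument is a plan rather than a proof. If you want to complete it, the efficient route is the paper's: classify $\delta$ via the coloring constraint first (rather than classifying $c'$, which produces a strictly larger case tree), and then check that in every surviving configuration the curve abutting $c$ either realizes a configuration of Claim~\ref{claim: special triple cases} or bounds a twist-reduction subdiagram.
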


\begin{proof}
Assume $c\in\calC_{2,2}$, Since $c \ssm (c\cap L)$ passes through two bubbles, 
the endpoints of (a small continuation of) $c\cap L$ have the same color in the 
checkerboard coloring of $P\ssm D(L)$. Thus it is one of the following:

\begin{figure}[H]
    \includegraphics{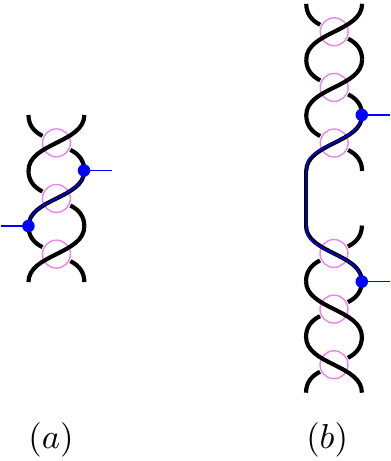}
\end{figure}

The complement $c \ssm (c\cap L)$ contains two turns, in twist regions which do not 
contain bubbles in $D$. Thus, the possible configurations are:

\begin{figure}[H]
    \includegraphics[width = 0.9\textwidth]{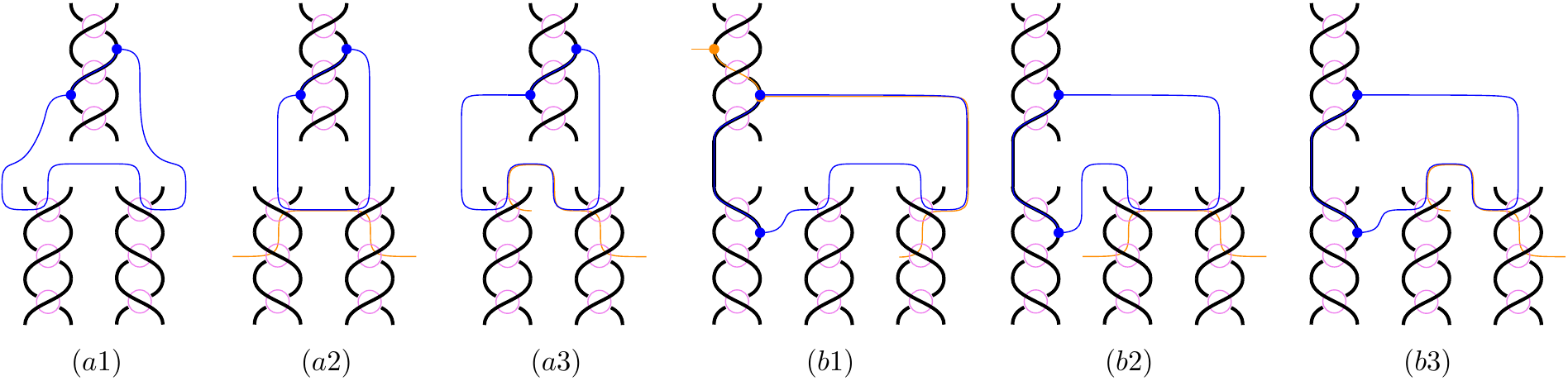}
\end{figure}

In each of the cases, consider the curve $\bar c$ abutting $c$ (a subarc of 
which is shown in orange). 

Case $(a1)$ is ruled out by Claim \ref{claim: special triple cases}.
In cases $(a2),(b1),(b2)$, it is clear that the curve 
$\bar c \in \calC_{4,0}\cup \calC_{2,2}$ and bounds a twist-reduction 
subdiagram, which is a contradiction. Cases $(a3),(b3)$ are impossible by 
Lemma \ref{lem: two turns}. 
\end{proof}

\begin{proof}[Proof of Proposition \ref{prop: all curves are good or C04}]
Assume in contradiction that there are bad curves which are not in $\calC_{0,4}$. 
Let $c$ be an innermost bad curve in $P^+$ which is not in $\calC_{0,4}$.

By the last two lemmas, $c$ is not in $\calC_{4,0}$ nor in $\calC_{2,2}$.  Thus, 
by Lemma \ref{lem: classification of chi'=0}, $c$ must contain an arc of $\calK$. 
Since every arc in $\calK$ wiggles through some twist region, we get a
contradiction  to Lemma \ref{lem: innermost does not cross}.
This contradiction finishes the proof of 
Proposition \ref{prop: all curves are good or C04}.
\end{proof}

\begin{corollary}\label{cor: atoroidal}
If the diagram of $L$ is 3-highly twisted, connected, prime, and twist-reduced then 
$\bbS^3 \ssm \calN(L)$ is atoroidal. In particular, $L$ is prime.
\end{corollary}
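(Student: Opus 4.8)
The plan is to argue by contradiction. Suppose $\bbS^3 \ssm \calN(L)$ contains an essential torus $T$; since $T$ is a torus, $\chi(T) = 0$, and since $T$ is not a $2$-sphere while $L$ is non-split by Corollary~\ref{cor: L is nonsplit}, after an isotopy we may take $T$ to be taut and, by Lemma~\ref{lem: normal form}, in normal position, still an essential torus. Let $\calC$ be its curves of intersection with $P^\pm$. As $\bbS^3 \ssm (P^+ \cup P^-)$ is a disjoint union of $3$-balls, a closed incompressible surface cannot be disjoint from $P^+ \cup P^-$: otherwise the inclusion $T \hookrightarrow \bbS^3 \ssm \calN(L)$ would factor through a ball, so $\pi_1(T) \to \pi_1(\bbS^3 \ssm \calN(L))$ would not be injective. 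Hence $\calC \neq \emptyset$.

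Now invoke Proposition~\ref{prop: all curves are good or C04}: every curve $c \in \calC$ is good or lies in $\calC_{0,4}$. Since $\partial T = \emptyset$ we have $\bdr{c} = 0$ for every $c \in \calC$, so no curve can lie in $\calC_{0,4}$; thus all curves of $T$ are good. By Remark~\ref{rem: Good curevs are good} this forces $T$ to be a boundary parallel torus, contradicting the assumption that $T$ is essential. Hence $\bbS^3 \ssm \calN(L)$ is atoroidal.

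Finally, if $L$ were not prime, then, since $L$ is non-split and not the unknot by Corollary~\ref{cor: L is nonsplit}, it would be a connected sum $L = L_1 \# L_2$ with both $L_i$ nontrivial; the swallow-follow torus of the decomposing sphere would then be an essential torus in $\bbS^3 \ssm \calN(L)$, contradicting the atoroidality just established. Therefore $L$ is prime.

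The substantive work, namely that each intersection curve of a taut surface with $\chi = 0$ is good or in $\calC_{0,4}$, has already been carried out in Proposition~\ref{prop: all curves are good or C04} and the lemmas preceding it, so it may be used as a black box here. The only points in this corollary that need any care are the reduction to taut normal position (which relies on the non-splitness of $L$ from Corollary~\ref{cor: L is nonsplit}) and the disposal of the degenerate case $\calC = \emptyset$; I do not expect a genuine obstacle beyond this bookkeeping.
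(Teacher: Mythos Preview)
Your proof is correct and follows essentially the same approach as the paper: reduce to a taut torus, apply Proposition~\ref{prop: all curves are good or C04}, observe that $\partial T=\emptyset$ rules out $\calC_{0,4}$, and conclude boundary-parallelism from all curves being good (then dispatch primeness via the swallow-follow torus). You add the check that $\calC\neq\emptyset$, which the paper leaves implicit, and you cite Remark~\ref{rem: Good curevs are good} for the ``good curves $\Rightarrow$ boundary parallel'' step whereas the paper gives a one-line direct argument; neither difference is substantive.
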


\begin{proof}
Let $S\subset \bbS^3 \ssm \calN(L)$ be an incompressible taut torus. Let $\calC$ be 
the curves of intersection of $S$ with $P^\pm$. Since $S$ has no boundary, 
$\calC_{0,4} = \emptyset$. By Proposition \ref{prop: all curves are good or C04} 
all curves in $\calC$ are good. Since each of the curves $c \in \calC$ bounds a disk in either 
$B^+$ or $B^-$ then after adding these disks along the curves in $\calC$ it is readily seen 
that the torus $S$ is boundary parallel.

If $L$ was a composite knot, then the swallow-follow torus would be an essential torus in 
$\bbS^3 \ssm L$.
\end{proof}

\begin{proposition}\label{prop: no curves in C04}
If the diagram of $L$ is 3-highly twisted, connected, prime and twist-reduced 
then $\bbS^3 \ssm \NN(L)$ is unannular.
\end{proposition}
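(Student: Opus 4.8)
The plan is to assume for contradiction that $\bbS^3\ssm\NN(L)$ contains an essential annulus, and to isotope it to a taut surface $S$; then $\chi(S)=0$, so Proposition~\ref{prop: all curves are good or C04} applies and every curve of $\calC$ is either good or lies in $\calC_{0,4}$. The first observation is that a good curve has $\bdr{\cdot}=0$: it is either in $\calC_{2,0}$ or of good type (I)--(V), and in each of those cases its boundary-joint count vanishes. Since $\partial S$ is a union of \D-bones and \D-joints, it is tiled entirely by \D-bones of curves in $\calC_{0,4}$; in particular $\calC_{0,4}\ne\emptyset$, because $S$, being an annulus, has $\partial S\ne\emptyset$. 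Thus it suffices to derive a contradiction from the existence of a single curve in $\calC_{0,4}$, which is why the statement is named for the absence of such curves.

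Next I would determine the shape of a curve $c\in\calC_{0,4}$. It decomposes cyclically as $\alpha_1\cup\beta_1\cup\alpha_2\cup\beta_2$ with $\alpha_1,\alpha_2$ \D-bones on $L$ and $\beta_1,\beta_2$ \I-bones lying in faces of $P\ssm D(L)$. By Lemma~\ref{lem: properties of curves}(6) the two \I-bones incident to $\alpha_1$, namely $\beta_1$ and $\beta_2$, lie in two \emph{distinct} faces $R_1\ne R_2$, and the strands of $L$ carrying $\alpha_1$ and $\alpha_2$ are each incident to both $R_1$ and $R_2$. Using primeness of $D(L)$ — by closing up a small push-off of $\beta_1\cup\beta_2$ to a simple closed curve meeting $D(L)$ transversely in two points, exactly as in the proofs of Lemma~\ref{lem: returning is good} and Lemma~\ref{lem: innermost does not cross} — together with twist-reducedness (to handle the bigon cases), I would show that the two strands carrying $\alpha_1$ and $\alpha_2$ must coincide in a single non-bridge strand $\sigma$, with $\beta_1$ and $\beta_2$ running along the two sides of $\sigma$; every other configuration forces either a crossingless subdiagram on the wrong side of the two-point curve or a twist-reduction subdiagram. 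Consequently the disk $S_c\subset H^\pm$ bounded by $c$ (a disk by Lemma~\ref{lem: properties of curves}(1)) is parallel, rel its two arcs on $\partial\NN(L)$, into $\partial\NN(L)$.

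Finally, choose $c\in\calC_{0,4}$ so that the disk it cuts from $P^+$ is innermost. Then the region of parallelism between $S_c$ and $\partial\NN(L)$ is a ball $V$ with $V\cap S=S_c$, since any other sheet of $S$ entering $V$ would meet $P^+$ inside this innermost disk. Isotoping $S$ across $V$ is supported away from all bubbles and removes the intersection arcs $\beta_1,\beta_2$ together with the four \D-joints of $c$, so it strictly decreases the lexicographic complexity $\mathrm{Com}(S)$, contradicting that $S$ is taut. Hence $\calC_{0,4}=\emptyset$, so $\partial S=\emptyset$, contradicting that $S$ is an annulus; therefore $\bbS^3\ssm\NN(L)$ is unannular. (Equivalently, one can phrase the last step as in Corollary~\ref{cor: atoroidal}: the disks $S_c$ for $c\in\calC_{0,4}$ assemble into a boundary-parallel collar of $\partial S$, and after capping the remaining good curves by disks in $H^\pm$ one sees directly that $S$ is boundary parallel, again a contradiction.)

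The step I expect to be the real obstacle is the middle one: ruling out, via primeness and twist-reducedness, all the ways in which the two \D-bones and two \I-bones of a $\calC_{0,4}$ curve could be placed in $D(L)$ — especially excluding configurations in which $\beta_1$ and $\beta_2$ join two strands lying in distinct twist regions. This will require the same careful ``follow the abutting curve and read off a forbidden subdiagram'' bookkeeping, organized by Table~\ref{tab: classification table}, that drives Proposition~\ref{prop: all curves are good or C04}. A secondary technical point is to check that the isotopy across $V$ in the last step can be carried out while keeping $S$ in normal position, in particular so that $\iota|_{\partial S}$ remains a covering map of $L$.
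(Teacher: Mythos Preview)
Your opening paragraph is fine and matches the paper: an essential annulus can be taken taut (once you know $L$ is prime, which you should cite from Corollary~\ref{cor: atoroidal} to exclude meridional boundary before invoking normal position), and then Proposition~\ref{prop: all curves are good or C04} forces every curve to be good or in $\calC_{0,4}$, with the latter nonempty.

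The real problem is the middle step, and it is more than a bookkeeping obstacle. Your structural claim --- that primeness and twist-reducedness force the two \D-bones of a $\calC_{0,4}$ curve onto a single overstrand arc $\sigma$ with the \I-bones running along its two sides, so that $S_c$ is boundary parallel --- is false in the key case. Consider the configuration the paper calls Sub-case~1.1: both \D-bones $\alpha,\beta$ lie in the same twist region $T$, on the same strand $\lambda_1$ of $L\cap T$, but with exactly one crossing of $T$ between them. Then $\alpha$ and $\beta$ sit on \emph{different} overstrand arcs, and the other strand $\lambda_2$ threads through the region between them; the disk $S_c$ is not parallel into $\partial\NN(L)$ because $\lambda_2$ obstructs the parallelism. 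Your ``push off $\beta_1\cup\beta_2$ to a two-point curve'' move does not apply here (the two \I-bones are in different faces and do not combine to a single simple closed curve meeting $D(L)$ twice), and no primeness or twist-reduction curve is visible in this picture. The paper disposes of this case by a genuinely three-dimensional argument: it builds tori $U,V$ from $\NN(S)\cup\NN(l_1)$, shows they are incompressible using a disk $\Delta$ that meets $l_2$ once, and then invokes Corollary~\ref{cor: atoroidal} to force boundary-parallelism of $U$ and derive a contradiction. There is no purely diagrammatic shortcut.

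Two smaller points you also skip: first, the paper begins the case analysis by observing that once one curve lies in $\calC_{0,4}$, every abutting curve does too, so in fact $\calC=\calC_{0,4}$; you need this before you can ``follow the abutting curve'' as you propose. Second, the remaining diagrammatic case (no curve visits the same twist region twice) is handled not by your parallelism argument but by showing the configuration is exactly one of those excluded in Claim~\ref{claim: special triple cases}. So the architecture of the argument is quite different from what you sketch, and the finishing isotopy you describe never actually occurs.
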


\begin{proof}
Let $S$ be an essential annulus. Since $L$ is prime by Corollary \ref{cor: atoroidal}, 
the annulus can be assumed not to have meridional boundary components. Thus, we may 
assume that $S$ is taut. Let $\calC$ be its curves of intersection with $P$. By 
Proposition \ref{prop: all curves are good or C04} all curves in $\calC$ are either 
good or in $\calC_{0,4}$. Since $S$ has boundary components, not all curves in $\calC$ 
are good, and there is at least one curve $c\in\calC_{0,4}$. We show that this 
is impossible.

If there exists a curve $c\in\calC_{0,4}$ then the curve $\bar c$ abutting $c$ has two 
intersection points which are not connected by an arc of $\bar{c}\cap L$ therefore 
$\bar c \in \calC_{0,4}$. Repeating this argument shows that all the curves are in 
$\calC_{0,4}$, i.e.,  $\calC = \calC_{0,4}$.

\vskip7pt

\underline{Case 1.} There exists a curve $c\in\calC_{0,4}$ which passes twice at the 
same twist region. 

Denote the two connected components of $c\cap L$ by $\alpha$ and $\beta$. 
Let $n$ be the number of crossings of $T$ in-between $\alpha$ and $\beta$. 
We further divide the proof into sub-cases depending on $n$.

\vskip7pt

\underline{Sub-case 1.0:} $n=0$. By Lemma \ref{lem: properties of curves}(5), 
$\alpha$ and $\beta$ do not meet the same bubble of $T$. Since we assume $n=0$, 
they must meet adjacent bubbles of $T$. The annulus $S$ must spiral between the 
strands of $L\cap T$.  Thus we obtain a disk of Type 2 (as in 
Lemma \ref{lem: normal form}) and hence, by the definition of 
$\calC^+$ and $\calC^-$ as in the beginning of
\S\ref{subsec: Curves of intersection}  this curve does not appear in $\calC$.

\vskip7pt

\underline{Sub-case 1.1:} $n=1$. The tangle $L\cap T$ has two components 
$\lambda_1,\lambda_2$. Let $l_1,l_2$ be the corresponding components of $L$ 
 (possibly $l_1=l_2$). Because $n = 1$ the arcs $\alpha$ and $\beta$ meet the 
 same string of $L\cap T$, say $\lambda_1$. Hence the two boundary components 
of $S$ are contained in the same component $l_1$ of $L$. If $l_1=l_2$, then 
there exists a curve $c'\in\calC_{0,4}$ which meets the bridge in-between 
$\alpha$ and $\beta$, and this curve must be as in Sub-case 1.0. Thus, we may 
assume that $l_1\ne l_2$. Consider the disk $\Delta$ as depicted in
Figure \ref{fig: C04 proof}(a). Its interior intersects $L$ in a single 
point in $l_2$, and its boundary is the union of an arc on $S$ and an arc on 
$l_1$. 

The manifold $\NN(S)\cup \NN(l_1)$ has two torus boundary components 
$U$ and $V$. See Figure \ref{fig: C04 proof}(b). Let $U$ be the torus that 
meets $\Delta$. Let $U_-$ be the component of $\bbS^3\ssm U$ containing $l_2$,
and let $U_+$ be the other component. The torus $U$ is incompressible in $U_-$, 
as such a compression must be on $\Delta$ and $\Delta$ does intersect $l_2$ once. 
It is also incompressible in $U_+$, as if  a compression disk exists then since 
it cannot intersect $l_1$, it gives a compression of the annulus $S$, in 
contradiction to the incompressibility of $S$. By Corollary \ref{cor: atoroidal}, 
$U$ must be boundary parallel to either 
$\partial \NN (l_2)$ or $\partial \NN (l_1)$. 

\begin{figure}[ht]
\centering

\subfigure[]{
   \begin{overpic}[height=3.5cm]{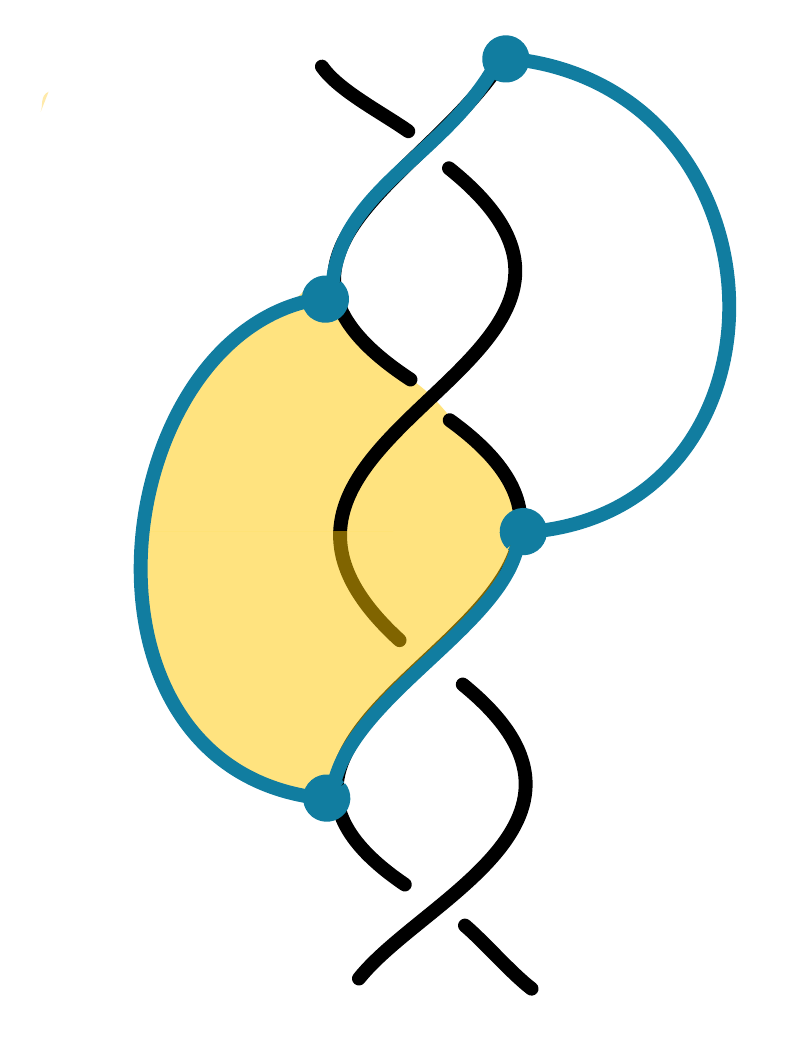}
\put(17,40){$\Delta$}
\put(30,-3){$l_2$}
\put(50,-3){$l_1$}
\put(38,30){\Tiny$\beta$}
\put(38,80){\Tiny$\alpha$}
\end{overpic}
}
\hspace{2cm}
\subfigure[]{
   \begin{overpic}[height=4cm]{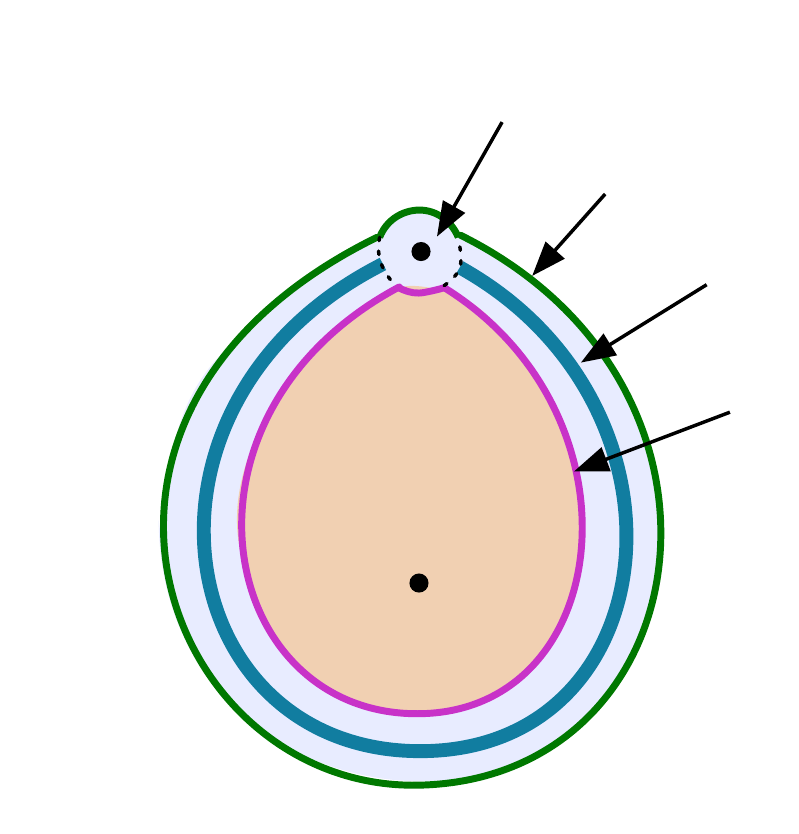}
\put(60,88){$l_1$}
\put(73,75){$V$}
\put(85,65){$S$}
\put(90,50){$U$}
\put(42,30){$l_2$}
\put(45,45){$U^-$}
\end{overpic}
}

\caption{(a) The disk $\Delta$. (b) A cross section of the twist box, 
the annulus $S$ and the tori $U,V$}  
    \label{fig: C04 proof}
\end{figure}

 If $U$ is parallel to $\partial \NN (l_2)$, then since $l_1$ is parallel to 
a curve in $U$ crossing $\Delta$ once there exists an annulus $A\subset \bbS^3 \ssm L$ 
whose boundary is $l_1\cup l_2$. This annulus $A$ is incompressible, since otherwise 
$l_1 \cup l_2$ would be a 2-component unlink that is not linked with $L$, i.e., $L$ 
is split, contradicting Corollary \ref{cor: L is nonsplit}. The annulus $A$ is trivially 
boundary-incompressible because the boundary components of $A$ are on two different 
components of $L$. If we run the argument for $A$ instead of $S$, Case 1.1 cannot 
occur because the boundary components of $A$ are on two different components of $L$.

If $U$ is parallel to $\partial \NN (l_1)$, the intersection $\Delta \cap U$ is a 
curve on $U$ which meets the meridian of $\NN(l_1)$ exactly once: As  if it meets 
it more than once, then the union $\NN(\Delta) \cup \NN(l_1)$ determines a 
once-punctured non-trivial lens space contained in $\bbS^3$, which is impossible. 
Thus, $\partial \Delta$ which is parallel to $\Delta\cap U$ is also parallel to $l_1$. 
Therefore, the arcs $\partial \Delta \ssm l_1 \subset S$ and 
$l_1 \ssm \partial \Delta \subset L$ bound a disk. Since the arc $\partial \Delta \ssm l_1$
connects different components of $S$ it is an essential arc, and the disk is a boundary
compression for $S$,  which is a contradiction.

\vskip7pt

\underline{Sub-case 1.2:} $n\ge 2$.
As the boundary of the annulus $S$ must pass through every other bridge in $T$, there must be 
another curve of $\calC_{0,4}$ in between $\alpha$ and $\beta$. By choosing an innermost such 
curve we are back in one of the previous cases.

\vskip7pt

\underline{Case 2.} None of the curves $c\in\calC_{0,4}$ passes twice at the same twist region.
Let $c_1$ be a curve in $\calC_{0,4}$, then  $c_1\cap L$ is the disjoint union of two arcs 
$\alpha,\beta$. Since all of the curves are in $\calC_{0,4}$ and at least one 
curve passes over a non-extremal bubble, we may assume, by changing $c_1$ that one 
of the components, say $\alpha_1$, passes over a non-extremal bubble in a twist region. 
The component $\beta$ cannot pass over one bubble, as in this case, either $c_1$ 
passes twice in the same twist region, or defines a twist-reduction subdiagram. 
Thus, $\beta$ must be the arc connecting two twist regions, passing over their 
two extremal bubbles, and the situation is as depicted in the following figure.

\begin{figure}[H]
    \includegraphics[width = 0.14\textwidth]{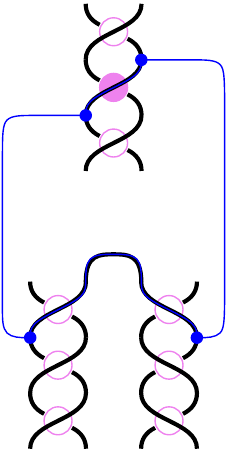}
\end{figure}
This case was ruled out in Claim \ref{claim: special triple cases}.
\end{proof}

\begin{remark}\label{rem: annular atoridal links}
The Sub-case 1.1  $n = 1$ in the proof of Proposition \ref{prop: no curves in C04}
follows also from the following well known general statement:

Non-split, annular atoroidal links in $\bbS^3$ are either torus 
knots or a link consisting of a torus knot on the ``standard torus'' $\bbT^2$ in $\bbS^3$ 
and one or both of the core curves of the solid tori components of $\bbS^3 \ssm \calN(\bbT^2)$.
\end{remark}

For completeness we include a proof.

\begin{proof}
Let $L \subset \bbS^3$  be a non-split and an atoroidal link in $\bbS^3$ containing an 
essential annulus $A$. If $L$ is a knot then boundary $A$ cuts $\partial \NN(L)$
into two annuli $A_1$ and $A_2$. The surfaces $A \cup A_1$ and $A \cup A_2$ are 
tori which bound solid tori $V_1$ and $V_2$ as $L$ is atoroidal. The two solid tori are 
glued to each other along $A$ and since the result together with a regular neighborhood 
of $L$ is $\bbS^3$, then by Seifert (see \cite{Seifert}), their complement is
a regular neighborhood of a torus knot.

If $L$ is a  non-hyperbolic, non-split link whose exterior is atoroidal  then  by
\cite{thurston} it is a Seifert link, i.e. its exterior is a Seifert fiber space.  
Links in Seifert spaces were classified by Burde and Murasugi in \cite{Burde-Murasugi}.
They are either a connected sum of Hopf links or consist of a union of Seifert fibers 
in some Seifert fibration of $\bbS^3$. Atoroidal such links can contain at most three 
fibers. Hence the link $L$ is a torus knot $K$ on an unknotted solid torus $\mathbb{T}$ 
and the  Hopf link which is the core curves of the  complementing solid tori.
\end{proof}

\begin{proof}[Proof of Theorem \ref{thm: highly twisted implies hyperbolic}]
By Thurston \cite{thurston}, it suffices to prove that $\bbS^3 \ssm \NN(L)$ has 
incompressible boundary, and is irreducible, atoroidal and unannular. By 
Corollary \ref{cor: L is nonsplit} it has incompressible boundary and is 
irreducible, by Corollary \ref{cor: atoroidal} it is atoroidal, and by 
Proposition \ref{prop: no curves in C04} it is unannuluar.
\end{proof}

\medskip

\section{Essential holed spheres in highly twisted link complements}\label{sec: essential holed spheres}

In this section we use Theorem \ref{thm: highly twisted implies hyperbolic} to show that 
certain holed spheres in the complement of a highly twisted links are essential. We begin 
with three definitions:

\begin{definition}\label{def: relative diagram}
Let $D(T)$ be a projection of a  tangle $(B, T)$ onto a disk $\Delta \subset P$ 
where $\partial\Delta$ is the simple closed curve $\gamma \subset P$  and  so
that the end points of the strings of $T$ denoted by $\partial T$ are contained 
in $\gamma$. We call $D(T)$ a \emph{tangle diagram}.
Let $\bar D(T)$ denote the reflection of $D(T)$ along $P$ (i.e. the diagram 
with the same projection, but with reverse crossing data).
\end{definition}

\begin{definition}\label{def: relative properties}
The diagram $D(T)$ is \emph{relatively prime} (resp.\emph{relatively twist reduced}, 
\emph{relatively $k$-highly twisted}) if the link diagram obtained by gluing $D(T)$ 
and $\bar D(T)$ along their boundary is  prime (resp. twist reduced,  $k$-highly twisted).
\end{definition}

\begin{definition}
A surface $S$ in $(\bbS^3,\calL)$ is \emph{pairwise-incompressible} if every disk 
$D$ in $\bbS^3$ with $\partial D = D \cap S$, and which intersects $\calL$ transversely 
in a single point, is isotopic to a disk in $S$ by an isotopy preserving $\calL$ set-wise.

The surface $S$ is \emph{acylindrical} if the complement of $S$ in 
$\bbS^3  \ssm \calN(\calL)$ contains no essential annuli whose boundary 
is on $S\cup\partial\calN(\calL)$.
\end{definition}

\begin{theoremA}\label{thm: General vertical spheres} Let $D(L)$ be  $3$-highly 
twisted diagram and let $\gamma$ be a simple closed curve in $D(L)$ intersecting 
$D(L)$ transversely. Assume that both tangle diagrams bounded by $\gamma$ are 
connected,  relatively prime, relatively twist-reduced, 
relatively $k$-highly twisted and contain at least two twist regions each. Let 
$\Sigma$ be the sphere in $\bbS^3$ which intersects $P$ transversely in $\gamma$, 
and does not intersect $\calL$ outside $\gamma$. Then the punctured sphere 
$\Sigma' = \Sigma \ssm \calN(\calL)$ is incompressible, boundary-incompressible, 
pairwise-incompressible and acylindrical in  $\bbS^3 \ssm \calN(\calL)$.
\end{theoremA}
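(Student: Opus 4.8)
The plan is to deduce all four conclusions from Theorem~\ref{thm: highly twisted implies hyperbolic} by a doubling construction. Write $M=\bbS^3\ssm\calN(\calL)$, let $\Sigma$ separate $\bbS^3$ into two balls $\bfB_1,\bfB_2$ so that $(\bfB_i,L\cap\bfB_i)$ is the tangle with diagram $D(T_i)$, and put $M_i=\bfB_i\ssm\calN(\calL)$, so $\Sigma'$ separates $M$ into $M_1$ and $M_2$. Let $DM_i$ be the double of $M_i$ along $\Sigma'\subseteq\partial M_i$, with orientation‑reversing involution $\tau_i$ exchanging the two copies of $M_i$ and fixing $\Sigma'$ pointwise. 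Doubling $\bfB_i$ along $\Sigma$ gives $\bbS^3$ and turns the tangle into a link $L_i$ whose diagram is $D(T_i)$ glued to $\bar D(T_i)$ along $\gamma$, so $DM_i=\bbS^3\ssm\calN(L_i)$. The ``relative'' hypotheses say precisely that this diagram is connected, prime, twist‑reduced, $3$‑highly twisted and has at least two twist regions, hence Theorem~\ref{thm: highly twisted implies hyperbolic} applies and $L_i$ is hyperbolic: $DM_i$ is irreducible, has incompressible boundary, and is atoroidal and unannular, $L_i$ is prime and not the unknot, $\calL$ is non‑split (Corollary~\ref{cor: L is nonsplit}), and $\Sigma'$ is a sphere with at least four punctures (a prime $D(L_i)$ with two twist regions forces $\gamma$ to meet $D(L)$ in at least four points). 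Two tools will be used repeatedly: since $DM_i\neq\bbS^3$, every $2$‑sphere in $DM_i$ bounds a \emph{unique} ball, which is therefore $\tau_i$‑invariant; and, by Smith theory, an orientation‑reversing involution of a $3$‑ball has a properly embedded disk as its fixed‑point set.

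First I would prove incompressibility: if $c\subset\Sigma'$ bounds a disk $D\subset M$ with interior disjoint from $\Sigma'$, then $D\subseteq M_i$ for some $i$ and $\widehat D=D\cup\tau_i(D)$ is a $2$‑sphere in $DM_i$ meeting $\Sigma'$ only in $c$; it bounds a $\tau_i$‑invariant ball $V$, and $V\cap\Sigma'=\Fix(\tau_i|_V)$ is a disk with boundary $c$, so $c$ is inessential on $\Sigma'$. The same scheme gives boundary‑incompressibility: a boundary‑compressing disk $D\subseteq M_i$ with $\partial D=\alpha\cup\beta$, $\alpha\subset\Sigma'$ an essential arc and $\beta$ an arc on the tube about a single tangle strand (the tubes about distinct strands being disjoint), doubles to a disk $\widehat D=D\cup\tau_i(D)$ with $\partial\widehat D$ on a boundary torus of $DM_i$. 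If the endpoints of $\alpha$ lie on two distinct punctures of $\Sigma'$, then $\partial\widehat D$ is a primitive (longitudinal) curve on that torus, so $\widehat D$ compresses $\partial DM_i$, contradicting Theorem~\ref{thm: highly twisted implies hyperbolic}; if they lie on one puncture, then $\beta$ is boundary‑parallel in its tube, $\widehat D$ together with a disk on the torus bounds a $\tau_i$‑invariant ball $V$, and $V\cap\Sigma'$ is a disk showing $\alpha$ to be boundary‑parallel in $\Sigma'$, again a contradiction.

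Next I would treat pairwise‑incompressibility. A disk $D\subset\bbS^3$ with $\partial D=D\cap\Sigma$ meeting $\calL$ once lies in some $\bfB_i$, and $\widehat D=D\cup\tau_i(D)$ is a $2$‑sphere meeting $L_i$ in two points and meeting $\Sigma$ only in $\partial D$. As $L_i$ is prime and not the unknot, $\widehat D$ bounds a ball $V$ in which $L_i\cap V$ is a boundary‑parallel arc; $\tau_i$ cannot interchange $V$ with its complement (that would make $L_i$ the connected sum of a trivial arc with its mirror, i.e.\ the unknot), so $V$ is $\tau_i$‑invariant. Then $V\cap\Sigma$ is a disk $E\subset\Sigma$ with $\partial E=\partial D$, and since the $\tau_i$‑invariant arc $L_i\cap V$ meets the fixed set $\Sigma$ exactly in its midpoint, $E$ contains a single point of $\calL\cap\Sigma$; finally $D\cup E$ bounds a subball of $V$ in which $L_i$ is boundary‑parallel, giving an isotopy of $D$ to $E$ that preserves $\calL$.

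Finally, for acylindricity I would argue that, since $\Sigma'$ is incompressible and boundary‑incompressible in $M$, hence in each $M_i$, the double $\widehat A=A\cup\tau_i(A)$ of an essential annulus $A\subset M_i$ with $\partial A\subset\Sigma'\cup\partial\calN(\calL)$ is an essential annulus or an essential torus in $DM_i$ (which of the two depends on how many components of $\partial A$ meet $\Sigma'$), contradicting that $DM_i$ is atoroidal and unannular. I expect this last step to be the main obstacle: incompressibility and boundary‑incompressibility of $\widehat A$ follow from those of $A$ together with the incompressibility and boundary‑incompressibility of $\Sigma'$ just established, by routine innermost‑disk and outermost‑arc arguments, but ruling out that $\widehat A$ is boundary‑parallel in $DM_i$ while $A$ is not boundary‑parallel in $M_i$ is delicate — it requires analysing how $\Sigma'$ meets the product region realising such a parallelism and using that $\Sigma'$ is a punctured sphere with at least four punctures, hence not an annulus. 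Along the way one also has to carry out the routine but necessary check that $D(T_i)\cup\bar D(T_i)$ genuinely inherits all the hypotheses of Theorem~\ref{thm: highly twisted implies hyperbolic} from the relative hypotheses on the tangle diagrams.
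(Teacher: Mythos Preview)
Your proposal is correct and takes essentially the same approach as the paper: double each tangle along $\Sigma$, apply Theorem~\ref{thm: highly twisted implies hyperbolic} to the resulting links $L_i$, and deduce each of the four properties of $\Sigma'$ by doubling the hypothetical offending disk or annulus into $DM_i$ and contradicting hyperbolicity (irreducibility, incompressible boundary, primeness, atoroidality/unannularity, respectively). The paper's proof is considerably terser than yours --- in particular, for the acylindrical case it simply asserts that the doubled annulus $\widehat A$ is an essential torus or annulus in $DM_i$ without the innermost/outermost and boundary-parallelism analysis you flag as the main obstacle --- so your added care with the involution $\tau_i$ and Smith theory is more than the paper supplies, not less.
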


\begin{proof}[Proof of Theorem \ref{thm: General vertical spheres}]

Assume in contradiction that $\Sigma' = \Sigma \ssm \calN(\calL)$ is either incompressible,
boundary-incompressible, pairwise-incompressible or acylindrical in  $\bbS^3 \ssm \calN(\calL)$.

Let $B_1,B_2$ be the two 3-balls that $\Sigma$ bounds in $\bbS^3$, and let 
$E_1=P\cap B_1, \; E_2 = P \cap B_2$ be the corresponding two disks in $P$ 
bounded by $\gamma$. The induced tangle diagrams on $E_1$ and $E_2$ are assumed 
to be relatively prime, relatively twist-reduced and relatively $3$-highly twisted. 
After doubling each of $E_1$ and $E_2$, as in Definition \ref{def: relative properties}, 
we get two link diagrams $D(L_1),D(L_2)$  which are connected, prime, twist-reduced and 
$3$-highly twisted. By Theorem \ref{thm: highly twisted implies hyperbolic}, the associated 
links $\calL_1,\calL_2$ are hyperbolic.

\begin{enumerate}
    \item If $\Sigma'$ is compressible then  the doubling of 
    an innermost  compressing   disk $\Delta \subset B_i$  will give rise to an essential 2-sphere 
   in $\bbS^3 \ssm \calN(\calL_i)$. 

\vskip5pt

    \item If $\Sigma'$ is  boundary compressible with boundary compression disk 
    $\Delta \subset B_i$, then  the doubling of $\Delta$ along $\Sigma'\cap \Delta$ 
    results in a disk which is bounded by a component of $\calL_i$. 
    Hence either $\calL_i$ is the unknot or has a split unknot component.
    
    \vskip5pt
    
    \item If $\Sigma'$ is pairwise  boundary compressible, then the doubling of the 
    essential disk $\Delta \subset B_i$ intersecting $\calL$ once is a 2-sphere 
    intersecting the boundary of $\NN(\calL_i)$ in two meridians.  Thus $\calL_i$ 
    is not prime.
    
    \vskip5pt
    
    \item If  $\Sigma'$ contains an essential annulus $A\subset B_i$ (i.e., $\Sigma'$ 
    is cylindrical), then $\calL_i$ is toroidal if both  boundaries of $A$ are on 
    $\Sigma'$, or annular if one of these boundaries is on $\Sigma'$ and the other on
    $\partial\calN(\calL)$.
\end{enumerate}

In all these cases we get that one of the links $\calL_1,\calL_2$ is not hyperbolic 
and thus a contradiction.
\end{proof}

\bibliographystyle{amsplain}
\bibliography{biblio}

\end{document}